\renewcommand{\Re}{\mathop{\rm Re}\nolimits}
\renewcommand{\Im}{\mathop{\rm Im}\nolimits}
\newcommand{\C}{{\mathbb C}}
\newcommand{\R}{{\mathbb R}}
\newcommand{\Z}{{\mathbb Z}}
\newcommand{\N}{{\mathbb N}}
\theoremstyle{plain}
\newtheorem{theorem}{Theorem}[section]
\newtheorem{lemma}[theorem]{Lemma}
\newtheorem{proposition}[theorem]{Proposition}
\theoremstyle{definition}
\theoremstyle{remark}
\newtheorem{remark}[theorem]{Remark}
\def\a{\alpha}
\numberwithin{equation}{section}
\def\a{\alpha}
\def\l{\lambda}
\def\be{\begin{equation}}
\def\ee{\end{equation}}
\title[Energy critical 3d NLS]{Non-dispersive vanishing and blow up at infinity for the energy critical nonlinear Schr\"odinger
equation in $\R^3$}
\author{Cecilia Ortoleva}
\address{LAMA, UMR CNRS 8050, Universit\'e Paris-Est Cr\'eteil, 61, avenue du G\'en\'eral de Gaulle, 94010 Cr\'eteil Cedex, France}
\email{cecilia.ortoleva@math.cnrs.fr}
\author{Galina Perelman}
\address{LAMA, UMR CNRS 8050, Universit\'e Paris-Est Cr\'eteil, 61, avenue du G\'en\'eral de Gaulle, 94010 Cr\'eteil Cedex, France}
\email{galina.perelman@u-pec.fr}
\date{}
\begin{document}
\maketitle
\rightline{{\it Dedicated to the memory of Vladimir Savelievich  Buslaev}}
\begin{abstract}

We consider the energy critical focusing nonlinear Schr\"odinger equation 
$i\psi_t=-\Delta \psi-|\psi|^4\psi $ in $\R^3$, and prove, for any $\nu$ and $\alpha_0$ sufficiently small,
the existence of radial finite energy solutions of the form
$\psi(x,t)=e^{i\alpha(t)}\lambda^{1/2}(t)W(\lambda(t)x)+e^{i\Delta t}\zeta^*+o_{\dot H^1} (1)$ as $t\rightarrow +\infty$, where
$\alpha(t)=\alpha_0\ln t$, $\lambda(t)=t^{\nu}$,
$W(x)=(1+\frac13|x|^2)^{-1/2}$ is the ground state,  and  $\zeta^*$
is arbitrary small in $\dot H^1$.
\end{abstract}

\section{Introduction}
\subsection{Setting of the problem and statement of the result}

In this paper we consider the energy
critical focusing nonlinear Schr\"odinger equation 
\begin{equation}\label{1}\begin{split}
&i\psi_t=-\Delta \psi-|\psi|^4\psi, \quad x\in\R^3,\\
&\psi|_{t=0}=\psi_0\in \dot H^{1}(\R^3).
\end{split}
\end{equation}
Cauchy problem \eqref{1} is locally  well posed and the solutions during their life span satisfy
conservation of energy:
\begin{equation}\label{2}
E(\psi (t))\equiv\int (|\nabla \psi(x,t)|^2 -\frac13 |\psi(x,t)|^6)\, dx=E(\psi_0).
\end{equation}
The problem is energy critical in the sense that both \eqref{1} and \eqref{2}
are invariant with respect to the scaling
$\psi(x,t)\rightarrow \lambda^{1/2}\psi(\lambda x, \lambda^2 t)$, $\lambda\in \R_+$.
For $\dot H^1$ small data one has global existence and scattering.  
In the case of large data blow up may occur. Indeed, the classical virial identity
$${d^2\over dt^2}\int |x|^2|\psi(x,t)|^2 dx=
8\int (|\nabla \psi(x,t)|^2 - |\psi(x,t)|^6)\, dx$$
shows that if $x\psi_0\in L^2(\R^3)$ and $E(\psi_0)<0$, then the solution breaks down in finite time.

Furthermore, 
Eq. \eqref{1} admits an explicit stationary solution (ground state):
$$W(x)=(1+\frac13|x|^2)^{-1/2},\quad \Delta W+W^5=0,$$
so that scattering cannot always occur even for solutions that exist globally in time.

The ground state $W$ is known to play an important role in the dynamics of \eqref{1}.
It was proved by Kenig and Merle \cite{KM} that $E(W)$ is an energy  threshold
for the dynamics in the following sense. If $\psi_0$ is radial and $E(\psi_0)<E(W)$ then\\
(i) the solution of \eqref{1} is global and scatters to zero as a free wave in both directions, provided 
$\|\nabla\psi_0\|_{L^2}<\|\nabla W\|_{L^2}$;\\
(ii) the solution blows up in finite time in both direction, provided $\psi_0\in L^2$ and
$\|\nabla\psi_0\|_{L^2}>\|\nabla W\|_{L^2}$.\\
The behavior of radial  solutions with critical energy $E(\psi_0)=E(W)$ was  classified
by Duyckaerts and Merle \cite{DM}. In this case, in addition to the finite time blow up and scattering to zero (and  $W$ itself), one has the existence of
solutions that converge as $t\rightarrow \infty$  to a rescaled ground state.
In the case of energy slightly greater than  $ E(W)$ the dynamics is expected to be more rich 
and  to include the  solutions that as  $t\rightarrow \infty$ behave like
$e^{i\alpha(t)}\lambda^{1/2}(t)W(\lambda(t)x)$ with 
fairly general $\alpha(t)$ and $\lambda(t)$.
For a closely related model of  the critical wave equation, the existence of this type of solutions 
with $\lambda(t)\rightarrow \infty$ (blow up at infinity) and $\lambda(t)\rightarrow 0$, $t\lambda(t)\rightarrow \infty$ (non-dispersive vanishing)
was recently proved
by Donninger and Krieger \cite{DK}. Our objective in this paper is to obtain an analogous result
for NLS \eqref{1}. More precisely, we prove the following.
\begin{theorem}\label{mth}
There exists $\beta_0>0$ such that 
for any $\nu,\,\alpha_0\in \R$ with  $|\nu|+|\alpha_0|\leq \beta_0$ and any $\delta >0$ 
there exist $T>0$ and a radial solution $ \psi\in C([T,+\infty), \dot H^1\cap \dot H^{2})$ to \eqref{1}
of the form:
\be\label{0.10}
\psi(x,t)=e^{i\alpha(t)}\lambda^{1/2}(t)W(\lambda(t)x)+\zeta(x,t),\ee
where 
$
\lambda(t)=t^{\nu}$, $\alpha(t)=\alpha_0\ln t$, and $\zeta(t)$ verifies:
\be\label{0.12}
\begin{split}
&\|\zeta(t)\|_{\dot H^1\cap \dot H^{2}}\leq \delta, \\
&\|\zeta(t)\|_{L^\infty}\leq C t^{-\frac{1+\nu}{2}},\\
&\|<\lambda(t)x>^{-1}\zeta(t)\|_{L^\infty}\leq C t^{-1-\frac{3}{2}\nu},
\end{split}
\ee
for all $t\geq T$.
The constants $C$ here and below are independent of $\nu, \alpha_0$ and $\delta$.\\
Furthermore, there exists $\zeta^*\in\dot H^{s}$, $\forall s> \frac12-\nu$,
such that, as
$t\rightarrow +\infty$, $\zeta(t)- e^{it\Delta}\zeta^*\rightarrow 0$ in $\dot H^1\cap \dot H^{2}$. 
\end{theorem}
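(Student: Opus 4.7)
The plan is to construct the solution in two stages: first build a smooth approximate solution to arbitrary polynomial order in $t^{-1}$ by an iterative expansion around the modulated ground state, then run a Duhamel fixed-point argument backward from $t=+\infty$ to upgrade the approximation to an exact solution with the prescribed asymptotic profile.

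\emph{Step 1: Equation for the remainder.} Writing $W_{\alpha,\lambda}(x,t)=e^{i\alpha(t)}\lambda^{1/2}(t)W(\lambda(t)x)$ and substituting $\psi=W_{\alpha,\lambda}+\zeta$ into \eqref{1}, I would use $\Delta W+W^5=0$ to obtain
\be
i\zeta_t+\Delta\zeta+5|W_{\alpha,\lambda}|^4\zeta+4|W_{\alpha,\lambda}|^2 W_{\alpha,\lambda}^{2}\bar\zeta+\mathrm{nl}(\zeta)=-F_0,
\ee
where $F_0=i\partial_t W_{\alpha,\lambda}+\Delta W_{\alpha,\lambda}+|W_{\alpha,\lambda}|^4 W_{\alpha,\lambda}$ is the error produced by the modulated profile. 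Since $\dot\alpha=\alpha_0/t$ and $\dot\lambda/\lambda=\nu/t$, we have $F_0=O(t^{-1})$ in scale-invariant norms. After the change of variables $\zeta(x,t)=e^{i\alpha(t)}\lambda^{1/2}(t)\eta(\lambda(t)x,\tau)$ with $d\tau/dt=\lambda^{2}(t)$, the linearization around $W$ acts on $(\Re\eta,\Im\eta)$ through the standard pair
\be
L_+=-\Delta-5W^4,\qquad L_-=-\Delta-W^4,
\ee
plus explicitly $\tau$-decaying perturbations carrying the modulation dynamics.

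\emph{Step 2: Iterative asymptotic construction.} I would construct $\psi^{(N)}=W_{\alpha,\lambda}+\sum_{k=1}^N v_k$ by inverting the matrix generated by $L_\pm$ against the current residual, gaining a factor $t^{-1}$ (possibly with a $\ln t$) at each step. The main technical obstruction here is the generalized null space of $L_\pm$, spanned by the symmetry modes $L_+(\Lambda W)=0$ with $\Lambda=1/2+x\cdot\nabla$, $L_-W=0$, and their generalized eigenfunctions $\rho_\pm$ (polynomially growing solutions of $L_\pm\rho_\pm\in\{\Lambda W,W\}$). Because the parameters $\alpha(t)$ and $\lambda(t)$ are \emph{prescribed}, Fredholm orthogonality cannot be restored by modulation; instead I would allow logarithmic time corrections and admit $\rho_\pm$-components in the $v_k$, verifying at each order that the solvability conditions are consistent with the given $\alpha_0,\nu$. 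This compatibility is what forces the smallness $|\nu|+|\alpha_0|\leq \beta_0$. After $N$ iterations one obtains a residual $F_N$ decaying like $t^{-N-1}$ together with its derivatives and with the localized spatial weight required in \eqref{0.12}.

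\emph{Step 3: Backward Duhamel fixed point and scattering.} Setting $\psi=\psi^{(N)}+\tilde\zeta$ with $N$ large, I would recast the equation as the integral equation
\be
\tilde\zeta(t)=i\int_t^{+\infty} e^{i(t-s)\Delta}\bigl(F_N(s)+\mathcal{R}(\tilde\zeta,s)\bigr)\,ds,
\ee
where $\mathcal{R}$ collects the linear-in-$\tilde\zeta$ interactions with $\psi^{(N)}$ together with the genuinely nonlinear terms, and the backward Duhamel formula enforces $\tilde\zeta(+\infty)=0$. A contraction in a Banach space whose norm simultaneously encodes the three bounds in \eqref{0.12} — a Strichartz norm for $\dot H^1\cap\dot H^2$, the $L^\infty$ bound with weight $t^{(1+\nu)/2}$, and the localized bound with weight $\langle\lambda(t)x\rangle^{-1}$ — will produce $\tilde\zeta$ on $[T,+\infty)$ for $T$ large. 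The scattering conclusion $\tilde\zeta-e^{it\Delta}\zeta^*\to 0$ in $\dot H^1\cap\dot H^2$ then follows by Cook's method from the time integrability of $F_N+\mathcal{R}(\tilde\zeta)$; the regularity $\zeta^*\in\dot H^s$ for $s>1/2-\nu$ reflects the fact that the $\dot H^s$-norm of the source coming from $W_{\alpha,\lambda}$ scales like $\lambda^{s-1/2}=t^{\nu(s-1/2)}$.

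The hardest step will be Step 2. Proving that each order of the expansion can be solved with a remainder in the correct weighted class, while keeping logarithmic factors from accumulating into a genuine secular term and while matching the Fredholm conditions to the \emph{a priori} fixed $\alpha_0$ and $\nu$, is the analytically delicate part of the argument. A secondary difficulty in Step 3 is implementing the weighted estimate with $\langle\lambda(t)x\rangle^{-1}$: this is a Kato-type local smoothing bound with a time-dependent spatial scale, and it requires pointwise/localized decay estimates for the free Schr\"odinger group that remain uniform as the weight stretches or contracts according to the sign of $\nu$.
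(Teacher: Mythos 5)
Your Step 2 has a genuine gap: an iterative expansion in the inner (rescaled) variables alone cannot produce the approximate solution you claim. Each correction $v_k$ obtained by inverting $L_\pm$ against the residual grows polynomially in the rescaled radial variable (in the paper the $k$-th profile behaves like $\rho^{2k-1}$ times logarithms as $\rho\to\infty$), so the partial sums are not globally defined finite-energy objects and the residual is only small in the region $\lambda(t)|x|\lesssim t^{(1+2\nu)/2}$. The paper must therefore match the inner expansion to a self-similar region ($|x|\sim t^{1/2}$, governed by the operator $-\Delta+\tfrac i2(\tfrac12+y\partial_y)$) and then to a remote region ($|x|\sim t$), and it is precisely the remote profile that supplies the radiation field $\zeta^*$ and the regularity $\dot H^{s}$, $s>\tfrac12-\nu$. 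Your proposal contains no mechanism for producing $\zeta^*$ or for making the approximate solution globally admissible. Also, no Fredholm solvability conditions actually arise in the inner hierarchy (one solves radial ODEs with zero data at the origin by variation of parameters, for arbitrary $\alpha_0$ and $\nu>-1/2$); the smallness $|\nu|+|\alpha_0|\le\beta_0$ is needed elsewhere, chiefly so that the linearized propagator's growth rate $(s/\tau)^{C(|\alpha_1|+|\nu_1|)}$ is beaten by the decay of the error.

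Step 3 as written would also fail. The linearized potential $5|W_{\alpha,\lambda}|^4$ is scale-invariant, hence $O(1)$ in the natural time variable over an infinite interval, so it cannot be absorbed perturbatively into a Duhamel iteration against the free group $e^{i(t-s)\Delta}$; one must work with the full linearized operator $H=-\Delta\sigma_3-3W^4\sigma_3-2W^4\sigma_3\sigma_1$. That operator has a pair of simple imaginary eigenvalues $\pm i\lambda_0$, i.e.\ an exponentially growing mode of the linearized flow, and a resonance at zero energy. A single backward-in-time integral from $t=+\infty$ diverges on the growing mode; the paper instead projects onto the discrete and essential spectral subspaces separately, integrating the unstable component from $+\infty$, the stable component forward from a finite time, and controlling the essential-spectrum part through energy estimates for the projected propagator whose proof requires a careful low-energy spectral analysis (quasi-resonant functions, coercivity in the sense of Duyckaerts--Merle). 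None of this structure is visible in your fixed-point scheme, and without it the contraction does not close.
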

\begin{remark} Theorem \ref{mth} remains valid, in fact, with $\dot H^2$ replaced by $\dot H^{k}$ for any 
$ k\geq 2$ (with $\beta_0$ depending on $k$).
\end{remark}
\begin{remark} The restriction on $\nu$ and $\alpha_0$ that appears in Theorem \ref{mth} seems 
to be technical. One  might expect the same result to be true for any $\nu >-1/2$ and any $\alpha_0\in \R$.
\end{remark}
\begin{remark} The solutions we construct to prove  the theorem belong, in fact, to $\dot H^{\frac12-\nu+}$.
\end{remark}
\begin{remark} Using the techniques  developed in this paper one can prove the existence of radial
finite time blow up solutions
of the form 
$\psi(x,t)=e^{i\alpha(t)}\lambda^{1/2}(t)W(\lambda(t)x)+\zeta(x,t)$,
$\lambda(t)=(T-t)^{-1/2-\nu}$, $\alpha(t)=\alpha_0\ln (T-t)$, 
where $\zeta(t)$ is arbitrary small in $\dot H^{1}\cap \dot H^2$
and $\nu >1$, $\alpha_0\in \R$ can be chosen arbitrarily. For the critical wave equation
an analogous result was proved by Krieger, Schlag, Tataru in \cite{KST}, see also \cite{P}
for a similar construction in the context of the critical Schr\"odinger map equation.

\end{remark}
\subsection{Outline of the paper} 
The paper is organized as follows. In Section 2 we construct (Prop. \ref{p1})
 a
sufficiently good  approximate solution of \eqref{1} very much in the spirit
of \cite{DK}, \cite{KST}, \cite{P}. 
In Section 3 we build up an exact solution
by solving the problem for the small remainder  with zero initial data at infinity, the main technical tool 
of the construction
being some suitable energy type estimates for the linearized evolution.
These estimates are proved in Section 4.

\section{Approximate solutions}\label{s2l}
In this section we prove the following result.
\begin{proposition}\label{p1}
For  any $\nu$ and $\alpha_0$ sufficiently small and any $0<\delta\leq 1$ there exists a radial approximate solution 
$\psi^{ap}\in C^\infty(\R^3,\R_+^*)$
of \eqref{1} such that the following holds for $t \geq T$ with some $T = T(\nu, \alpha_0,\delta) > 0$.\\
\noindent (i) $\psi^{ap}$ has  the form:
$\psi^{ap}(x,t)=e^{i\a(t)}\lambda^{1/2}(t)(W(\lambda(t)x)+\chi^{ap}(\lambda(t)x,t))$,
where $\chi^{ap}(y,t)$, $y=\lambda(t)x$, verifies
\begin{align}
&\|\chi^{ap}(t)\|_{\dot H^k}\leq C\delta^{\nu+k-1/2}t^{-\nu(k-1)}, \quad k=1,2,\label{p1.1.1}\\
&\|\chi^{ap}(t)\|_{L^\infty}\leq Ct^{-(1+2\nu)/2},
\label{p1.1.2}\\
&\||y|^{-1}\chi^{ap}(t)\|_{L^\infty}+ \|\nabla\chi^{ap}(t)\|_{L^\infty}\leq C t^{-1-2\nu},\label{p1.1.3}\\
&\||y|^{-2}\chi^{ap}(t)\|_{L^\infty}+ \||y|^{-1}\nabla_y\chi^{ap}(t)\|_{L^\infty}\leq C  (|\nu|+|\alpha_0|)t^{-1-2\nu},\label{p1.1.31}\\
&\|\nabla^2\chi^{ap}(t)\|_{L^\infty}\leq C  (|\nu|+|\alpha_0|)t^{-1-2\nu}.\label{p1.1.4}
\end{align}
Furthermore, there exists $\zeta^*\in \dot H^{s}$, for any $s> \frac12-\nu$, such that,
as $t\rightarrow +\infty$, $e^{i\a(t)}\lambda^{1/2}(t)\chi^{ap}(\l(t)\cdot, t)- e^{it\Delta}\zeta^*\rightarrow 0$ in $\dot H^1\cap \dot H^{2}$.\\
\noindent (ii) The corresponding error $R=-i\psi^{ap}_t-\Delta \psi^{ap}-|\psi^{ap}|^4\psi^{ap}$
satisfies
\be\label{p1.2}
\|R(t)\|_{\dot H^{k}}
\leq t^{-(2+\frac18)(1+2\nu)+\nu(k+1)},\quad k=0,1,2.\ee
\end{proposition}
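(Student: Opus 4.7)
The plan is to construct $\chi^{ap}$ as a finite iterative expansion around $W$, completed by a matching to a free Schr\"odinger wave in the far field, in the spirit of \cite{DK,KST,P}. The first step is to rescale to $y=\lambda(t)x$ and substitute the ansatz into \eqref{1}. Using $\Delta W+W^5=0$, the equation for $\chi$ reduces to
$$i\lambda^{-2}\partial_t\chi+L\chi=F_0+N(\chi),$$
where $L$ is the matrix $\R$-linearization of $-\Delta u-|u|^4u$ at $W$, $N$ collects the higher-order terms, and $F_0$ is the forcing produced by the prescribed modulation $\lambda(t)=t^\nu$, $\alpha(t)=\alpha_0\ln t$, of size $O((|\nu|+|\alpha_0|)/t)$. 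On radial functions $L$ splits into $L_+=-\Delta-5W^4$, with one-dimensional kernel spanned by $\Lambda W=\tfrac12 W+\tfrac12 y\cdot\nabla W$, and $L_-=-\Delta-W^4$, with kernel spanned by $W$; both inverses can be written explicitly by variation of parameters using a second, singular linearly independent solution.

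The iterative scheme will produce $\chi^{ap}=\sum_{j=1}^{N_0}\chi_j$ where each $\chi_j$ has a definite temporal size $t^{-\mu_j}$ with $\mu_j$ strictly increasing and built out of $1$ and $1+2\nu$. At each step I invert $L_\pm$ against the residual left by the previous terms and track the resulting behavior at the origin, at infinity, and in regularity. The pointwise bounds \eqref{p1.1.2}--\eqref{p1.1.4} and the $\dot H^k$ bound \eqref{p1.1.1} follow by direct estimation of these explicit corrections. The refined bounds \eqref{p1.1.31}--\eqref{p1.1.4} reflect the fact that the leading corrections vanish when $\nu=\alpha_0=0$, since then $F_0\equiv 0$. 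Because the kernels of $L_\pm$ decay only like $|y|^{-1}$, each application of $L_\pm^{-1}$ loses decay at infinity, so the $\chi_j$ are accurate only in an inner zone $|y|\lesssim t^\sigma$ for an appropriately chosen exponent $\sigma$.

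To obtain a globally defined approximation with the required radiation behavior, I match the inner expansion to an outer profile that is, to leading order, a free Schr\"odinger wave in the original variable $x$. At $|y|\sim t^\sigma$ I insert a smooth cutoff and complete the inner expansion by an outer function whose principal part is $e^{it\Delta}\zeta^\ast$ for some $\zeta^\ast\in\dot H^s$, $s>1/2-\nu$, to be determined. The profile $\zeta^\ast$ is defined as the strong limit, as $T^\ast\to+\infty$, of the free backward propagation from time $T^\ast$ of the outer tail; a Duhamel/Cook-type argument using the dispersive bounds for $e^{it\Delta}$ on $\dot H^s$ shows that this limit exists and that the outer piece differs from $e^{it\Delta}\zeta^\ast$ by a remainder vanishing in $\dot H^1\cap\dot H^2$ as $t\to+\infty$.

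The last step is to estimate the total error $R=-i\psi^{ap}_t-\Delta\psi^{ap}-|\psi^{ap}|^4\psi^{ap}$, which splits into an inner residual (from terminating the iteration and from $N(\chi^{ap})$), an outer residual (from the non-free part of the outer profile), and a matching error. Taking $N_0$ large enough and $\sigma$ well tuned, each piece is bounded by $t^{-(2+1/8)(1+2\nu)+\nu(k+1)}$ in $\dot H^k$, yielding \eqref{p1.2}. The main obstacle I anticipate is the matching step: the slow decay of the zero modes of $L_\pm$ forces the inner corrections to grow at infinity, so the matching to a free wave must be performed so as to (i) preserve the $\dot H^1\cap\dot H^2$ smallness in \eqref{p1.1.1}, (ii) remain consistent with a free-wave tail of the prescribed regularity, and (iii) leave room in the residual budget for the sharp exponent $(2+\tfrac18)(1+2\nu)$ in \eqref{p1.2}. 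The extra gain of $t^{-(1+2\nu)/8}$ over the naive size of the remainder should come from a final ``overshoot'' step in the iteration combined with the truncation cutoff.
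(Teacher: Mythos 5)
Your inner construction coincides with the paper's: expand around $W$, split the linearization into $L_+=-\Delta-5W^4$ and $L_-=-\Delta-W^4$, invert by variation of parameters against the $O((|\nu|+|\alpha_0|)t^{-1-2\nu})$ forcing created by the modulation, and track the growth of the corrections at infinity. The genuine gap is in the matching step. You propose a two--zone scheme in which the inner expansion is glued directly, across a single interface $|y|\sim t^{\sigma}$, to an outer profile whose principal part is a free wave $e^{it\Delta}\zeta^*$. This cannot be carried out: the inner series is effectively an expansion in powers of $\rho^2/t^{1+2\nu}$ (the $k$-th term is $t^{-k(1+2\nu)}\chi_k$ with $\chi_k\sim \rho^{2k-1}$, up to logarithms, at infinity), so it degenerates exactly at $\rho\sim t^{1/2+\nu}$, i.e.\ $|x|\sim t^{1/2}$, and below that scale its profile consists of non-oscillatory power-law tails that a free wave does not reproduce. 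The resummation of the inner series at $|x|\sim t^{1/2}$ is \emph{not} a free wave but a solution of the self-similar equation $(\mathcal{L}+\mu)A=\cdots$ with $\mathcal{L}=-\Delta+\tfrac{i}{2}(\tfrac12+y\partial_y)$, $y=|x|t^{-1/2}$, whose solutions are combinations of a non-oscillatory branch $f_1\sim y^{-1/2+2i\mu}$ and an oscillatory branch $f_2\sim e^{iy^2/4}y^{-5/2-2i\mu}$. Only the $f_2$-component, read off as $y\to+\infty$, feeds an outgoing free wave in the remote region $|x|\sim t$, and the coefficients of that component --- hence $\zeta^*$ itself, which the paper writes down explicitly as a Fourier integral of the resulting profile $z$ --- are determined only by solving this connection problem across the intermediate layer. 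Your definition of $\zeta^*$ as a limit of backward free propagation of ``the outer tail'' has no well-defined input until that connection problem is solved.

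The missing layer also matters for the error estimate \eqref{p1.2}. In the paper the cutoffs are placed at $|x|\sim t^{1/2-\epsilon_1}$ and $|x|\sim t^{1/2+\epsilon_2}$, where the two adjacent expansions are \emph{simultaneously} valid and agree to high order; the commutator terms $E_1$, $E_2$ generated by the cutoffs are then controlled by the size of the differences of the two representations there, and the exponent $2+\tfrac18$ comes from optimizing $\epsilon_1$ (taking $N=27$ inner corrections so that $(2N+3)\epsilon_1>\tfrac32(1+2\nu)$) together with $\tfrac38\le\epsilon_2<\tfrac12$. With a single interface between the inner zone and a free wave there is no region of common validity, the cutoff commutators are $O(1)$ relative to the profiles being glued, and no choice of $N_0$ and $\sigma$ recovers the required decay; so the anticipated ``overshoot step'' does not substitute for the self-similar analysis.
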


\noindent The construction of ${\psi}^{ap}(t)$ will be achieved by considering separately the three regions that correspond to three different space scales: the inner region with the scale $t^{\nu} |x| \lesssim 1$, the self-similar region where $|x| = O(t^{1/2})$, and, finally, the remote region where $|x| = O(t)$. In the inner region the solution will be constructed as a perturbation of the profile $e^{i \alpha_0  \ln t} t^{\nu/2} W(t^{\nu}x)$. The self-similar and remote regions are the regions where the solution is small and  is  described essentially by the linear equation $i \psi_t = -\triangle \psi$. In the self-similar region the profile of the solution will be determined uniquely by the matching conditions coming out from the inner region, while in the remote region the profile remains essentially a free parameter of the construction, only the limiting behavior at the origin is prescribed by the matching procedure.

\subsection{The inner region}
\noindent We start by considering the inner region $0 \leq t^\nu|x|\leq 10 t^{1/2 +\nu -\epsilon_1}$ with $0 < \epsilon_1 < 1/2+\nu$ to be fixed later.
Writing $\psi(x,t)$ as
$ \psi(x, t) = e^{i \alpha(t)}\lambda^{1/2}(t) u (\rho, t)$, $\rho=\lambda(t)|x|$,
we get from \eqref{1}
\begin{equation}    \label{eq_V}
i t^{-2 \nu} u_t-\alpha_0 t^{-(1+ 2\nu)}u+i \nu t^{-(1 +2\nu)} ( \frac{1}{2} +\rho \partial_{\rho}) u
= 
-\triangle u -|u|^4 u.
\end{equation}
Write $u(\rho,t)=W(\rho)+\chi(\rho,t)$. Then $\vec \chi(t)={\chi(t)\choose \bar \chi(t)}$ solves
\begin{equation}    \label{eq-chi}
i t^{-2\nu} \vec \chi_t = H \vec \chi +{\mathcal N}(\chi),
\end{equation}
\noindent where
\begin{equation*}\begin{split}
&H = -\triangle\sigma_3 -3W^4 \sigma_3-2W^4 \sigma_3 \sigma_1,\,\,\,
\sigma_1=\left(\begin{array}{cc}
0 & 1\\
1 & 0
\end{array} \right),\,\,
\sigma_3=\left(\begin{array}{cc}
1 & 0\\
0 & -1
\end{array} \right),\\
&{\mathcal N}(\chi) = \left( \begin{array}{cc}
{N}(\chi)\\
-\overline{{N}(\chi)}\end{array} \right),
\quad N(\chi)=
{N}_0 +{ N}_1(\chi) +{ N}_2(\chi),\\
&{ N}_0 = \alpha_0 t^{-(1 +2\nu)} W  -i\nu t^{-(1 +2\nu)} W_1,\quad W_1(\rho) = ( \frac{1}{2} +\rho \partial_{\rho}) W(\rho)\\
&
{N}_1(\chi) =
\alpha_0  t^{-(1 +2\nu)} \chi -i \nu t^{-(1 +2\nu)} ( \frac{1}{2} +\rho \partial_{\rho}) \chi,\\
&{N}_2(\chi) = -|W +\chi|^4(W +\chi) +W^5 +3W^4 \chi +2W^4 \overline{\chi}.
\end{split}
\end{equation*}

We look for a solution to \eqref{eq-chi} of the form
\begin{equation}    \label{ansatz-chi}
\chi(\rho, t) = \sum_{k = 1}^{\infty} t^{-k (1 +2\nu)} \chi_k(\rho).
\end{equation}
Substituting \eqref{ansatz-chi} into \eqref{eq-chi} and identifying the terms with the same powers of $t$ we get the following system for $\{ \chi_k \}_{k \geq 1}$:
\begin{equation}    \label{syst_chi}
H \vec \chi_k = {\mathcal D}_k, \quad k\geq 1,
\end{equation}
where ${\mathcal D}_k = \left (\begin{array}{cc}{D}_k\\
-\overline{{D}_k}
\end{array} \right)$,
\begin{equation*}\begin{split}
&D_1=-\alpha_0W+i\nu W_1,\\
&D_k=D_k^{(1)} +D_k^{(2)},\quad k\geq 2,
\end{split}
\end{equation*}
$D_k^{(1)}$ and $D_k^{(2)}$ being contributions of $it^{-2\nu}\chi_t-N_1(\chi)$ and $-N_2(\chi)$ respectively:
\begin{equation*}\begin{split}
&{D}_k^{(1)} = -i (1 +2\nu) (k -1) \chi_{k -1} -\alpha_0 \nu \chi_{k -1} +i \nu( \frac{1}{2} +\rho \partial_{\rho}) \chi_{k -1},\\
&
N_2(\chi) = -\sum_{k = 2}^{\infty} t^{-k (1 +2 \nu)} D_k^{(2)}(\rho).
\end{split}
\end{equation*}
Note that $D_k$ depends on $\chi_p$, $1 \leq p \leq k-1$ only:
$$D_k =  D_k (\rho; \chi_p, 1 \leq p \leq k-1).$$
We subject \eqref{syst_chi} to zero initial conditions at $0$:
$
\chi_k(0)=\partial_\rho \chi_k(0)=0$.
\begin{lemma}   \label{lemma-chi}
System \eqref{syst_chi} has a unique solution $\{ \chi_k \}_{k \geq 1}$ verifying:\\
i) for any $k \geq 1$, $\chi_k$ is a $C^{\infty}$ function that has an even Taylor  expansion at $\rho = 0$ that starts at order $2k$;  \\
ii)  as $\rho \rightarrow +\infty$, $\chi_k$, $k \geq 1$, has the following asymptotic expansion 
\begin{equation}\label{chi-at-inf}
\chi_k(\rho) = \sum_{l = 0}^{k} \sum_{j \leq 2k -2l -1} \alpha_{l,j}^{(k)} (\ln \rho)^l \rho^j,
\ee
with some coefficients $\alpha_{l,j}^{(k)}$ verifying $\alpha_{k,2m}^{(k)} = 0$ for all $k, m $.
The asymptotic expansion \eqref{chi-at-inf} can be differentiated any number of times with respect to $\rho$.
\end{lemma}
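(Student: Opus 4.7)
The plan is to proceed by induction on $k$, simultaneously establishing existence, smoothness, the vanishing of $\chi_k$ at $0$ to order $2k$, and the asymptotic structure at infinity. Since $\mathcal{D}_k$ depends only on $\{\chi_p\}_{p<k}$, the right-hand side of \eqref{syst_chi} is smooth at each inductive step. Decomposing $\chi_k = a_k + i b_k$ decouples the system into two real scalar equations
\begin{equation*}
L_+ a_k = \Re D_k, \qquad L_- b_k = \Im D_k,
\end{equation*}
with $L_+ = -\Delta - 5W^4$ and $L_- = -\Delta - W^4$; each is a second-order radial ODE with a regular singular point at $\rho = 0$ of indicial roots $0$ and $-1$, and with one explicit smooth kernel element ($W_1 \in \ker L_+$, $W \in \ker L_-$). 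Frobenius theory and variation of parameters produce a unique $C^\infty$ solution to the Cauchy problem with zero data at $\rho = 0$. For the Taylor expansion at the origin, the induction hypothesis $\chi_p = O(\rho^{2p})$ (even) implies $D_k^{(1)} = O(\rho^{2(k-1)})$ (linear in $\chi_{k-1}$ and $\rho\partial_\rho \chi_{k-1}$) and $D_k^{(2)} = O(\rho^{2k})$ (at least quadratic in $\{\chi_p\}_{p<k}$ with weights summing to $k$). Inverting the leading $-\Delta$, which gains two powers of $\rho$, yields $\chi_k = O(\rho^{2k})$; evenness is inherited from $W$ and the polynomial structure of $D_k$.

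For the expansion at $\rho = \infty$, the key mechanism is the inversion rule
\begin{equation*}
(-\Delta)^{-1}\bigl((\ln\rho)^l \rho^j\bigr) = \begin{cases} c\,(\ln\rho)^l \rho^{j+2}, & j \notin \{-2, -3\},\\ c\,(\ln\rho)^{l+1} \rho^{j+2}, & j \in \{-2, -3\},\end{cases}
\end{equation*}
where the log power is boosted by one in the two resonant cases (when $j+2 \in \{0, -1\}$ hits an indicial root of $-\Delta$ at infinity). Since $W$, $W_1$, and every $W^s$ admit asymptotic expansions in odd negative powers of $\rho$, one has $L_\pm = -\Delta + O(\rho^{-4})$ at $\infty$, so the same rule governs $L_\pm^{-1}$ modulo lower-order corrections of the allowed structural form. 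Propagating the inductive expansion through $D_k^{(1)}$ (which inherits $l \le k-1$, $j \le 2k - 2l - 3$) and $D_k^{(2)}$ (whose $W^{5-m}$ factors further decrease the $\rho$-exponents), and then inverting, produces an expansion for $\chi_k$ of the form $\sum \alpha^{(k)}_{l,j}(\ln\rho)^l \rho^j$ with $l \le k$ and $j \le 2k - 2l - 1$.

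The principal obstacle is the vanishing $\alpha^{(k)}_{k, 2m} = 0$, i.e.\ the fact that the top-log coefficient carries only odd $\rho$-exponents. I would establish this by a parallel parity induction. The base case is $\chi_1$: its only $(\ln\rho)^1$-term is $\rho^{-1}\ln\rho$, produced by the resonant $\rho^{-3}$-term in the expansion of $-\alpha_0 W$. For the inductive step, observe that at top log level $l = p$, $\chi_p$ carries only odd $\rho$-exponents; the operator $\rho\partial_\rho$ preserves $\rho$-exponents, so $D_k^{(1)}$ retains odd-parity at top log $k-1$; and in a generic term $W^{5-m}\prod_i \chi_{p_i}^{(\varepsilon_i)}$ contributing to $D_k^{(2)}$ at top log $k$, the $\rho$-parities sum to $m + (5-m) \equiv 1 \pmod{2}$, again odd. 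Consequently, at top log $l = k-1$ the source $D_k$ has only odd $\rho$-exponents, so the resonance $j = -2$ (which would generate the forbidden $(\ln\rho)^k \rho^0$) never fires, whereas the resonance $j = -3$ yields $(\ln\rho)^k \rho^{-1}$, still odd. The careful bookkeeping required to carry this parity/resonance matching across all terms and all stages of the induction is the main technical difficulty.
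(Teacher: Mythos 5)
Your proposal follows essentially the same route as the paper's proof: the same decoupling into $L_+\Re\chi_k=\Re D_k$, $L_-\Im\chi_k=\Im D_k$ with the explicit kernel elements and variation of parameters with zero data at the origin, an induction on $k$, and exactly the two structural facts about the source that the paper records as $d^{(k)}_{-2,k-1}=0$ and $d^{(k)}_{2m,k}=0$ (your statement that the $j=-2$ resonance never fires at log level $k-1$ and that the top-log stratum carries only odd exponents). The only caveat is a slight imprecision in the final step -- the top log level of $D_k$ is $k$ (reached by $D_k^{(2)}$), not $k-1$, so to kill the $(\ln\rho)^{k-1}\rho^{-2}$ coefficient you also need that the level-$(k-1)$ part of $D_k^{(2)}$ has exponents $j\leq -3$ (which follows from the $W^{5-m}$ weights, as you indicate) -- but this is exactly the bookkeeping you flag, and it goes through.
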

\begin{proof}
It will be convenient for us to rewrite \eqref{syst_chi} as
\begin{equation}
\label{syst-v}
L_+ v_k^+ = G_k^+, \quad
L_- v_k^- = G_k^-, \quad k\geq1,
\end{equation}
where 
$$\begin{array}{ll}
v_k^+ = \Re \chi_k, & v_k^- = \Im \chi_k,\\
G_k^+ = \Re D_k, & G_k^- = \Im D_k,\\
L_+ = -\triangle -5W^4, & L_- = -\triangle -W^4.
\end{array}$$

\noindent For $k = 1$ \eqref{syst-v} gives
\begin{equation}    \label{syst-v_1}
L_+ v_1^+ = -\alpha_0  W, \quad
L_- v_1^- = \nu W_1.
\end{equation}

\noindent The homogeneous equation $L_{\pm} f = 0$ has two explicit solutions $\Phi_{\pm}$, $\Theta_{\pm}$ given by
\begin{equation}\label{k=0.0}\begin{split}
&\Phi_-(\rho) = W(\rho), \quad \Theta_-(\rho) = \left( 1 +\frac{\rho^2}{3} \right)^{-1/2} \left(
\frac\rho3 -\frac{1}{\rho} \right),\\
&\Phi_+(\rho) = W_1(\rho), \quad \Theta_+(\rho) = -2\left( 1 +\frac{\rho^2}{3} \right)^{-3/2} \left(
\frac{1}{\rho} -2\rho +\frac{\rho^3}{9} \right).
\end{split}\end{equation}
Therefore, solving \eqref{syst-v_1} with zero initial conditions at the origin we obtain
\begin{equation}    \label{v}
\begin{array}{ll}
v_1^+(\rho) =\alpha_0 \int_0^{\rho} s^2 (\Theta_+(\rho) \Phi_+(s) -\Theta_+(s) \Phi_+(\rho)) W(s) ds,\\
v_1^-(\rho) =- \nu \int_0^{\rho} s^2 (\Theta_-(\rho) \Phi_-(s) -\Theta_-(s) \Phi_-(\rho)) W_1(s) ds.
\end{array}
\end{equation}

\noindent Since $W$, $W_1$ are $C^{\infty}$ even functions, $v_1^+$ and $v_1^-$ are also $C^{\infty}$ functions with even Taylor expansion at $\rho = 0$ that  starts at order $2$. Furthermore, the asymptotic expansions of $v_1^+$ and $v_1^-$ as $\rho \rightarrow +\infty$ can be obtained directly from \eqref{v}. As claimed, one has
$$v_1^+(\rho) +iv_1^-(\rho) = \sum_{j \leq 1} \alpha_{0, j}^{(1)} \rho^j +\sum_{j \leq 0} \alpha_{1, j}^{(1)} \rho^{2j -1} \ln \rho, \qquad \textrm{as} \,\, \rho \rightarrow +\infty.$$

We next proceed by induction. Let us consider $k > 1$ and assume that we have found $\chi_i$, $i = 1, \cdots, k-1$, that verify i), ii). Then one can easily check that $D_k$ is an even  $C^{\infty}$ function
with a Taylor series at $0$ starting at order $2(k-1)$ and as $\rho \rightarrow +\infty$, $D_k$ admits
an asymptotic expansion of the form
$$D_k(\rho) = \sum_{l = 0}^{k-1} \sum_{j \leq 2k -2l -3} d_{j, l}^{(k)} (\ln \rho)^l \rho^j +
(\ln \rho)^k \sum_{j \leq -5} d_{j, k}^{(k)} \rho^{j},$$
where
$ d_{-2, k-1}^{(k)}=0$
and $ d_{2m, k}^{(k)}=0,\,\,\forall m$.
Therefore, solving $L_{\pm} v_k^{\pm} = G_k^{\pm}$ with zero conditions at $\rho = 0$ we get a $C^\infty$  even solution $v_k^{\pm} $ which is $O(\rho^{2k})$ at the origin. Finally, the asymptotic expansion at infinity follows directly from the representation
$$v_k^{\pm}(\rho) = -\int_0^{\rho} s^2 (\Theta_{\pm}(\rho) \Phi_{\pm}(s) -\Theta_{\pm}(s) \Phi_{\pm}(\rho)) G_k^{\pm}(s) ds.$$
\end{proof}

\begin{remark}\label{rem-chi}
Clearly, for any $k$, $\chi_k $ is a polynomial with respect to $\alpha_0$ and $\nu$ of the form
$$\chi_k=\sum\limits_{1\leq m+n\leq k}\alpha_0^m\nu^n\chi_{m,n}^k(\rho),$$
where the coefficients $\chi_{m,n}^k$ are $C^{\infty}$ functions of $ \rho$ with  an even Taylor expansion at $ 0$ that starts at order $2k$.
As  $\rho \rightarrow +\infty$, $\chi^k_{m,n}$  admits an  asymptotic expansion of the form \eqref{chi-at-inf}.
\end{remark}

For any $N \geq 2$, define
$$\chi^{(N)}(\rho, t) = \sum_{k = 1}^N t^{-k (1 +2\nu)} \chi_k(\rho).$$
It follows from our construction that  $\chi^{(N)}$ verifies
\begin{equation}\label{inn-0}
\begin{split}
\bigg| \rho^{-k} \partial_{\rho}^l (-i t^{-2\nu} \vec\chi^{(N)}_t +&H\vec\chi^{(N)} +\mathcal{N}(\chi^{(N)})) \bigg| 
\leq \\
&C_{N, l, k}t^{-(N+1)(1 +2\nu) }<\rho>^{2N -1 -l-k},\end{split}
\ee
for any $k,l\in \N$, $k+l\leq 2N$, $0 \leq \rho \leq 10 t^{\frac{1}{2} +\nu -\epsilon_1}$, $t \geq 1$.

Fix $N = 27$, $\epsilon_1 = \frac{1 +2\nu}{27},\,\,$\footnote{This choice  has  no specific meaning here.
To produce an approximate solution with an error verifying \eqref{p1.2} it is sufficient to require
$(2N+3)\varepsilon_1>3(1+2\nu)/2$,
$0<\varepsilon_1<\frac{1+2\nu}{20}$, see \eqref{er-inn} and \eqref{ss-1.5}, \eqref{ss-1.6}.}
and set
\begin{equation*}\begin{split}
&u_{in}^{ap} = W + \chi_{in}^{ap},\quad
\chi_{in}^{ap}=\chi^{(27)}, \\
&\mathcal{R}_{in}= -i t^{-2\nu} \partial_tu^{ap}_{in}-\triangle u^{ap}_{in}+\alpha_0  t^{-1 -2\nu} u^{ap}_{in}
 -i \nu t^{-1 -2\nu}( \frac{1}{2} +\rho \partial_{\rho}) u^{ap}_{in} 
-|u^{ap}_{in}|^4 u^{ap}_{in}.
\end{split}
\end{equation*}

As a direct consequence of Lemma \ref{lemma-chi}  and estimate \eqref{inn-0},  we obtain the following result.
\begin{lemma}   \label{behav-chi} For any $\alpha_0\in \R$ and any $\nu>-\frac12$ there exists
$T=T(\alpha_0, \nu)>0$ such that for $t\geq T$ the following holds.\\
(i) The profile $\chi^{ap}_{in}(t)$ verifies
\begin{align}
&\|\chi_{in}^{ap}\|_{L^\infty(0\leq \rho\leq 10 t^{\frac{1}{2} +\nu -\epsilon_1})}\leq C(|\nu|+|\alpha_0|)t^{-\frac12-\nu},\label{inn-1}\\
&\|\rho^{-k}\partial_\rho^l\chi_{in}^{ap}\|_{L^\infty(0\leq \rho\leq 10 t^{\frac{1}{2} +\nu -\epsilon_1})}\leq C(|\nu|+|\alpha_0|)t^{-1-2\nu},\quad 1\leq k+l\leq 2,\label{inn-2}\\
&\|\rho^{-k}\partial_\rho^l\chi_{in}^{ap}\|_{L^2(\rho^2 d\rho, 0\leq \rho\leq 10 t^{\frac{1}{2} +\nu -\epsilon_1})}\leq C(|\nu|+|\alpha_0|)t^{-(\frac12+\nu)(k+l-\frac12)},\,\,k+l\leq 2.\label{inn-4}
\end{align}
(ii) The error
$\mathcal{R}_{in}(t)$
admits the estimate
\begin{equation}\label{er-inn}
\left\| \rho^{-k} \partial_{\rho}^l \mathcal{R}_{in}(t) \right\|_{L^2(\rho^2 d\rho, 0\leq \rho\leq 10 t^{\frac{1}{2} +\nu -\epsilon_1})}\leq t^{-3(1 +2\nu)/4-\varepsilon_1(2N+1/2)},\quad k+l\leq 2.
\end{equation}
\end{lemma}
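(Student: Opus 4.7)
The plan is to read off bounds on each $\chi_k$ from Lemma~\ref{lemma-chi} and Remark~\ref{rem-chi}, sum them for part (i), and deduce part (ii) directly from the truncation residual estimate \eqref{inn-0}. The whole argument will reduce to bookkeeping of powers of $t$ and $\rho$; there is no conceptual difficulty.

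For part (i), I would combine the two asymptotic regimes of Lemma~\ref{lemma-chi} into a single global pointwise bound of the shape
\begin{equation*}
|\rho^{-m}\partial_\rho^l \chi_k(\rho)| \leq C_k(|\alpha_0|+|\nu|)\,<\rho>^{2k-1-l-m}\,(1+\log(2+\rho))^{k},\qquad m+l\leq 2,\ k\geq 1.
\end{equation*}
The prefactor $(|\alpha_0|+|\nu|)$ is supplied by Remark~\ref{rem-chi} (each $\chi_k$ is a polynomial in $\alpha_0,\nu$ whose coefficients $\chi_{m,n}^k$ appear only with $1\leq m+n\leq k$); the behavior at infinity follows from differentiating \eqref{chi-at-inf} termwise; and boundedness near the origin is ensured because the even Taylor expansion starts at order $2k$, so $\rho^{-m}\partial_\rho^l\chi_k=O(\rho^{2k-l-m})$, which is safe for $k\geq 1$, $m+l\leq 2$. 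Substituting into $\chi^{ap}_{in}=\sum_{k=1}^{27}t^{-k(1+2\nu)}\chi_k$, on $\rho\leq 10t^{1/2+\nu-\epsilon_1}$ each summand is controlled at the outer edge by $t^{-(1+2\nu)/2-(2k-1)\epsilon_1}$ times logs; only $k=1$ survives, yielding \eqref{inn-1}--\eqref{inn-2}. The weighted $L^2$ bound \eqref{inn-4} follows by squaring the pointwise bound for $k=1$ and integrating $\rho^2\,d\rho$ up to $R=10t^{1/2+\nu-\epsilon_1}$.

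For part (ii), I would first observe that $\mathcal R_{in}$ is precisely the first component of $-it^{-2\nu}\vec\chi^{(N)}_t+H\vec\chi^{(N)}+\mathcal N(\chi^{(N)})$ (obtained from the definition of $\mathcal R_{in}$ by using $\Delta W+W^5=0$ and regrouping the $N_0,N_1,N_2$ contributions, cf.\ \eqref{eq-chi}). Then \eqref{inn-0} with $N=27$ provides $|\rho^{-m}\partial_\rho^l\mathcal R_{in}|\leq C t^{-(N+1)(1+2\nu)}<\rho>^{2N-1-l-m}$ for $m+l\leq 2$, and the weighted $L^2$ norm over $\rho\leq R$ behaves like $t^{-(N+1)(1+2\nu)}R^{2N+1/2-l-m}$. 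Inserting $R=10t^{1/2+\nu-\epsilon_1}$ and $\epsilon_1=(1+2\nu)/27$ gives the exponent $-(3/4+(l+m)/2)(1+2\nu)-(2N+1/2-l-m)\epsilon_1$, which is at most $-3(1+2\nu)/4-(2N+1/2)\epsilon_1$ for every $l+m\leq 2$ since $\epsilon_1<(1+2\nu)/2$; this is exactly \eqref{er-inn}.

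The main delicacy I expect is simply tracking how $N$, $\epsilon_1$ and $(l,m)$ combine in the exponent of $t$, and verifying that the specific choice $N=27$, $\epsilon_1=(1+2\nu)/27$ satisfies the footnote conditions $(2N+3)\epsilon_1>3(1+2\nu)/2$ and $0<\epsilon_1<(1+2\nu)/20$, so that \eqref{er-inn} is sharp enough to be absorbed by the matching estimates in the self-similar and remote regions that come next.
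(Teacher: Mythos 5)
Your proposal is correct and follows exactly the route the paper intends: the paper states Lemma \ref{behav-chi} as ``a direct consequence of Lemma \ref{lemma-chi} and estimate \eqref{inn-0}'' without writing out the details, and your bookkeeping (summing the pointwise bounds on the $\chi_k$ from Lemma \ref{lemma-chi} and Remark \ref{rem-chi}, identifying $\mathcal R_{in}$ with the first component of $-it^{-2\nu}\vec\chi^{(N)}_t+H\vec\chi^{(N)}+\mathcal N(\chi^{(N)})$, and integrating \eqref{inn-0} over $\rho\leq 10t^{1/2+\nu-\epsilon_1}$) supplies precisely the missing computation with the correct exponents. No gaps.
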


\subsection{The self-similar region}
We next consider  the self-similar  region
$\frac{1}{10}t^{-\varepsilon_1}\leq |x|t^{-1/2}\leq 10t^{\varepsilon_2}$,
where $0<\varepsilon_2<1/2$ to be fixed later.
Write 
$\psi(x,t) = e^{i \alpha_0 \ln t} t^{-1/4} w(y, t)$, $ y = t^{-1/2} |x|$.
Then, $w(t)$ solves
\begin{equation}    \label{eq_psi-tilde}
i t w_t= (\mathcal{L}  +\alpha_0)w-|w|^4 w,
\end{equation}
 where $\mathcal{L} = -\triangle +\frac{i}{2} \left( \frac{1}{2} +y \partial_y \right) $.

Note that in the limit $\rho\rightarrow +\infty$, $y\rightarrow 0$ one has, at least, formally
\begin{equation}\label{*}\begin{split}
&t^{\nu/2}(W(\rho)+\sum_{k\geq 1}t^{-k(1+2\nu)}\chi_k(\rho))=\\
&t^{-1/4}\sum_{n\geq 0}\sum_{0\leq l\leq \frac{n}{2}}t^{-\frac14(2n+1)(1+2\nu)}(\ln y+(\frac12+\nu)\ln t)^l
\sum_{k\geq l}\alpha^{(k)}_{l, 2k-n-1}y^{2k-n-1},
\end{split}
\end{equation}
 where $\alpha_{l, j}^{(k)}$, $k \neq 0$, are given by Lemma \ref{lemma-chi} and $\alpha_{l, j}^{(0)}$ come from the expansion of $W(\rho)$ as $\rho\rightarrow \infty$:
$$W(\rho) = \sum_{j \leq 0} \alpha_{0, j}^{(0)} \rho^j,\quad \alpha_{0, 2m}^{(0)} = 0 \,\,\,\forall m\in \Z.$$
Eq. \eqref{*}  suggests
the following ansatz for $w$:
\begin{equation}\label{**}
w(y,t)=\sum_{n\geq 0}\sum_{0\leq l\leq \frac{n}{2}}
t^{-\frac14(2n+1)(1+2\nu)}(\ln y+(\frac12+\nu)\ln t)^lA_{n,l}(y).
\ee
As it will become clear later, to prove Proposition \ref{p1}, it is sufficient to consider only three first 
terms of expansion \eqref{**}. Therefore, we look  for an approximate solution  of the form
\begin{equation*}\begin{split}
w^{ap}_{ss}(y,t)=&t^{-(1+2\nu)/4} A_{0,0}(y) +t^{-3(1+2\nu)/4} A_{1,0}(y) \\
&+t^{-5(1+2\nu)/4}\big(A_{2,0}(y) + (\ln y+(\frac12+\nu)\ln t)A_{2, 1}(y) \big).
\end{split}
\end{equation*}
Substituting this ansatz into the expression $-i t w_t +(\mathcal{L}+\alpha_0)w -|w|^4w$ one gets
\begin{equation}\label{ss-1}\begin{split}
-i t \partial_tw^{ap}_{ss}& +(\mathcal{L}+\alpha_0) w^{ap}_{ss} -|w^{ap}_{ss}|^4 w^{ap}_{ss} =
t^{-(1+2\nu)/4} S_{0,0}(y) +t^{-3(1+2\nu)/4} S_{1,0}(y)\\
& +t^{-5(1+2\nu)/4} (S_{0,0}(y) +
(\ln y+(\frac12+\nu)\ln t)S_{2, 1}(y) ) +S(y, t),
\end{split}
\ee
where
$$\begin{array}{ll}
S_{n, 0}(y) = (\mathcal{L} +\mu_n) A_{n, 0}(y), \quad n = 0, 1,\\
S_{2, 1}(y) = (\mathcal{L} +\mu_2)A_{2, 1}(y),\\
S_{2, 0}(y) = (\mathcal{L} +\mu_2) A_{2, 0}(y) -i\nu A_{2, 1}(y) -\frac{2}{y} \partial_y A_{2, 1}(y) -\frac{A_{2,1}(y)}{y^2} -|A_{0,0}(y)|^4 A_{0,0}(y),\\
S(y, t) = -|w^{ap}_{ss}(y,t)|^4 w^{ap}_{ss}(y,t) +t^{-5(1+2\nu)/4} |A_{0,0}(y)|^4 A_{0,0}(y).
\end{array}$$
Here $ \mu_n=\alpha_0+\frac{i}{4}(2n+1)(1+2\nu)$.

 We require that $S_{n,l} = 0$, $n= 0,1,2$, $l = 0,1$, which means that the corresponding $A_{n,l}$ have to solve
\begin{equation}    \label{syst_A}
\left\{ \begin{array}{ll}
( \mathcal{L} +\mu_n) A_{n, 0}= 0, \quad n= 0, 1,\\
( \mathcal{L} +\mu_2) A_{2, 1} = 0,\\
( \mathcal{L} +\mu_2) A_{2, 0}= i\nu A_{2, 1}+\frac{2}{y} \partial_y A_{2, 1} +\frac{A_{2,1}}{y^2} +|A_{0,0}|^4 A_{0,0}
\end{array} \right..
\end{equation}
\noindent In addition, in order to have the  matching  with the inner region, $A_{n,l}$ have  to satisfy
\begin{equation}\label{match-1}
A_{n,l}(y) = \sum_{k \geq l} \alpha_{l, 2k -n -1}^{(k)} y^{2k -n -1},\quad y\rightarrow 0.
\ee
\begin{lemma}\label{sss-1}
There exists a unique solution of \eqref{syst_A} that as $y\rightarrow 0$ admits an asymptotis expansion of the form
\begin{equation}    \label{A-at-0}
A_{n,l}(y) = \sum_{k \geq l} d_{n, k, l} y^{2k -n -1},
\end{equation}
with $d_{0, 0, 0} = \alpha_{0, -1}^{(0)}$, $d_{1, 1, 0} = \alpha_{0, 0}^{(1)}$ and $d_{2, 1, 0} = \alpha_{0, -1}^{(1)}$.
\end{lemma}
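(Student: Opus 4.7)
My plan is to treat each equation in \eqref{syst_A} as a radial second-order linear ODE and to construct its solution by a Frobenius-type expansion at $y=0$. On radial functions,
\[
(\mathcal{L}+\mu)A = -A''-\tfrac{2}{y}A'+\tfrac{iy}{2}A'+(\mu+\tfrac{i}{4})A,
\]
so $y=0$ is a regular singular point with indicial polynomial $-\sigma(\sigma+1)$ and indicial roots $\sigma=0,-1$, while $y=\infty$ is an irregular point reminiscent of the parabolic cylinder equation. Plugging the ansatz $A_{n,l}=\sum_{k\geq l}d_{n,k,l}y^{2k-n-1}$ into the equation produces a two-step recurrence (the Laplacian part shifts the exponent by $-2$, the remaining terms preserve it) that expresses $d_{n,k+1,l}$ in terms of $d_{n,k,l}$. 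The recursion coefficients behave like $O(1/k)$, so each formal series converges on $(0,\infty)$ and, by the standard local theory at a regular singular point, defines a smooth solution of the ODE on $(0,\infty)$.

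For the three homogeneous equations I proceed as follows. For $A_{0,0}$ and $A_{2,1}$ the leading exponent $-n-1+2l=-1$ coincides with the indicial root $\sigma=-1$, so the indicial balance at $k=l$ is automatic and the leading coefficient is left free; the parity of the exponent sequence (only odd powers of $y$) suppresses the companion Frobenius branch with exponent $0$. For $A_{0,0}$ this free coefficient is prescribed to be $d_{0,0,0}=\alpha^{(0)}_{0,-1}$, while for $A_{2,1}$ the coefficient $d_{2,1,1}$ is tackled in the next step. For $A_{1,0}$ the putative leading exponent $-2$ is not indicial, so the $y^{-4}$ balance forces $d_{1,0,0}=0$, consistent with the matching condition since $\alpha^{(0)}_{0,-2}=0$; the series then effectively begins at $y^{0}$, on the indicial root $\sigma=0$, with leading coefficient $d_{1,1,0}=\alpha^{(1)}_{0,0}$, after which the recursion uniquely determines the rest.

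The inhomogeneous equation for $A_{2,0}$ is where the main difficulty lies. Using the expansions from the previous step, the right-hand side admits an asymptotic expansion in odd powers of $y$ whose most singular contributions are $|d_{0,0,0}|^4d_{0,0,0}\,y^{-5}$ from $|A_{0,0}|^4A_{0,0}$ and $-d_{2,1,1}\,y^{-3}$ from $\frac{2}{y}\partial_yA_{2,1}+y^{-2}A_{2,1}$, together with further $y^{-3}$ contributions from the sub-leading term of $|A_{0,0}|^4A_{0,0}$ and from $i\nu A_{2,1}$. The $y^{-5}$ balance $-6d_{2,0,0}=|d_{0,0,0}|^4d_{0,0,0}$ fixes $d_{2,0,0}$. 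The $y^{-3}$ balance, however, is resonant: the coefficient of $d_{2,1,0}$ in it equals $-\sigma(\sigma+1)|_{\sigma=-1}=0$, so $d_{2,1,0}$ drops out and one instead obtains a solvability condition, which I use to pin down the otherwise-free leading coefficient $d_{2,1,1}$ of $A_{2,1}$. The log-free form of the ansatz requires that this condition be met exactly — the nontrivial point to verify — for otherwise a spurious $\ln y$ would be forced, in conflict with the placement of the log in the $A_{2,1}$ term of $w^{ap}_{ss}$. Once this single compatibility is checked, $d_{2,1,0}=\alpha^{(1)}_{0,-1}$ is imposed as prescribed, the recursion at every higher order is non-degenerate and determines all remaining coefficients, and the convergence/extension arguments of the first paragraph conclude the proof.
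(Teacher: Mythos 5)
Your Frobenius-series argument is correct and is essentially the paper's own proof in different clothing: the paper packages the two Frobenius branches as the explicit basis $e_1(y,\mu)=\frac1y+(\mu-\frac i4)\tilde e_1$ (odd) and $e_2$ (even, $=1+O(y^2)$), and likewise fixes the free coefficient $c_0$ of $A_{2,1}$ by demanding that the resonant $y^{-3}$ coefficient of the source for $A_{2,0}$ (after peeling off the $y^{-5}$ part with a $-\kappa_{-2}/(6y^3)$ term) vanish, so that no logarithm is generated. The only slip is that $i\nu A_{2,1}$ contributes at order $y^{-1}$ rather than $y^{-3}$, which does not affect the argument since the $c_0$-dependence of the solvability condition still has the nonzero coefficient $-1$.
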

\begin{proof}
First of all note that the equation $(\mathcal{L} +\mu) f = 0$ has a basis of solutions $e_1(y,\mu)$, $e_2(y,\mu)$ such that:\\
(i) $e_1(y,\mu) = \frac{1}{y} +(\mu-\frac i4)\tilde e_1(y,\mu)$, where $\tilde e_1$ is an entire function of $y$ and $\mu$,
odd with respect to $y$;\\
(ii) $e_2$ is a  entire function of $y$ and $\mu$,
even with respect to $y$, and  as $y \rightarrow 0$, $e_2(y,\mu)=1+O(y^2).$

Two first equations of \eqref{syst_A} together with \eqref{A-at-0} give
\be\label{A_{0,0}}
A_{0,0}(y) = \alpha_{0, -1}^{(0)} e_1(y,\mu_0),\quad
A_{1,0}(y) = \alpha_{0, 0}^{(1)} e_2(y,\mu_1).
\ee

We next consider the remaining equations of \eqref{syst_A}. Equation $(\mathcal{L} +\mu_2) A_{2, 1}(y) = 0$ and \eqref{A-at-0} yield
$A_{2,1}(y) = c_0 e_1(y,\mu_2)$, with some constant $c_0$.
Then, for $A_{2,0}$ we have
$( \mathcal{L} +\mu_2) A_{2, 0}=F$,
where 
$$F=c_0(i\nu+\frac{2}{y} \partial_y+y^{-2})e_1(y, \mu_2)+|A_{0,0}|^4 A_{0,0}.$$
As $y\rightarrow 0$, $ F$ has an asymptotic expansion of the form
$$F(y)=\sum_{i\geq -2}\kappa_iy^{2i-1},$$
with some coefficients $\kappa_i$, $\kappa_{-2}$ and $\kappa_{-1}+c_0$ being independent of $c_0$.

Write $A_{2, 0}(y) = -\frac{\kappa_{-2}}{6y^3} +\widetilde{A}_{2,0}(y)$.  Then $\widetilde{A}_{2,0}$ solves
\be\label{***}
( \mathcal{L} +\mu_2) \widetilde A_{2, 0}=\widetilde F,
\ee
where $\widetilde F=F+\frac{\kappa_{-2}}{6}( \mathcal{L} +\mu_2)\frac{1}{y^3}$ has the following asymptotics as $y\rightarrow 0$:
$$\widetilde F(y)=\sum_{i\geq -1}\tilde\kappa_iy^{2i-1},\quad \tilde\kappa_{-1}=\tilde \kappa^{0}_{-1}-c_0,$$
with $\tilde\kappa^{0}_{-1}$ independent of $c_0$.
Take $c_0=\tilde\kappa^{0}_{-1}$. Then Eq. \eqref{***} has a unique solution of the form
$$\widetilde{A}_{2,0}(y) =
\alpha_{0,-1}^{(1)} e_1(y,\mu_2)+ {\rm a}\,\, C^\infty {\rm odd\,\, function}.$$
\end{proof}

\begin{remark} By uniqueness,  $A_{n,l}$
given by Lemma \ref{sss-1} verify matching conditions \eqref{match-1}. Note also that all 
 $A_{n,l}$
are entire
functions of $\alpha_0$ and $\nu$.
\end{remark}

 We next study the behavior of $A_{n,l}$ as $y \rightarrow +\infty$. To this purpose notice that for any  $\mu\in \C$, equation $( \mathcal{L} +\mu)f=0$ has a basis of solutions $f_1(y,\mu)$, $f_2(y,\mu)$ 
such that $yf_1$, $yf_2$ are smooth functions in both variables and as  
$y\rightarrow +\infty$ one has
\be\label{ss-2}
f_1(y,\mu) = y^{-1/2+2i\mu}(1 +O(y^{-2})),\,\,
f_2(y,\mu) = e^{i \frac{y^2}{4}} y^{-5/2-2i\mu} (1 +O({y^{-2}})).
\ee
These  asymptotics are uniform in $\mu$
on  compact subsets of $\C$ and  can be differentiated any number of times with respect to $y$.

Decomposing $A_{1,0}$, $A_{2,0}$, $A_{2,1}$ in the basis $f_1$, $f_2$ one gets
\begin{equation}    \label{A-f}
\begin{array}{ll}
A_{n,0}(y) = d_1^n f_1(y,\mu_n) +d_2^n f_2(y,\mu_n),\quad n=0,1,\\
A_{2,1}(y) = d_1^2 f_1(y,\mu_2) +d_2^2 f_2(y,\mu_2),
\end{array}
\end{equation}
with some  coefficients $d_j^n$, $j = 1, 2$, $n=0, 1, 2$.
As a consequence, as $y\rightarrow +\infty$, one has
\begin{equation}    \label{A-at-inf}
\begin{array}{ll}
A_{0,0}(y)=d_1^0 y^{2i \alpha_0  -1 -\nu} (1 +O(y^{-2}))+d_2^0 e^{iy^2/4} y^{-2i \alpha_0  -2 +\nu} (1 +O(y^{-2})),\\
A_{1,0}(y)=d_1^1 y^{2i \alpha_0-2 -3\nu} (1 +O(y^{-2}))+d_2^1e^{iy^2/4} y^{-2i \alpha_0 -1 +3\nu} (1 +O(y^{-2})),\\
A_{2,1}(y)=d_1^2y^{2i \alpha_0  -3 -5\nu} (1 +O(y^{-2}))+d_2^2e^{iy^2/4} y^{-2i \alpha_0 +5\nu} (1 +O(y^{-2})).
\end{array}
\end{equation}
Asymptotics \eqref{A-at-inf} can be differentiated any number of times with respect to $y$.

Let us now consider $A_{2,0}$ and write it as
\begin{equation}    \label{ansatz-A21}
A_{2,0}(y) = 2 d_1^2 \nu \ln y f_1(y,\mu_2) -2(\nu +1) d_2^2 \ln y f_2(y,\mu_2) +\widehat{A}_{2,0}(y).
\end{equation}
Then $\widehat{A}_{2,0}(y)$ solves
\begin{equation}    \label{eq-A21}
( \mathcal{L} +\mu_2) \widehat{A}_{2,0}= G,
\end{equation}
with $G =d_2^2 G_1 +G_2$, where
\begin{equation*}\begin{split}
&G_1= -d_2^2(1+2\nu)(2y^{-1}\partial_y+y^{-2}-i)f_2(y,\mu_2),\\
&G_2=|A_{0,0}|^4 A_{0,0}+
d_1^2(1+2\nu)(2y^{-1}\partial_y+y^{-2})f_1(y,\mu_2).
\end{split}
\end{equation*}

\noindent It follows from the asymptotics \eqref{ss-2}, \eqref{A-at-inf}
 that $G_j$, $j = 1,2$, has the following behavior 
as $y\rightarrow +\infty$,
\begin{equation*}\begin{split}
&G_1(y) =e^{i  y^2/4} y^{-2i\alpha_0}G_{1,1}(y),\quad
G_2(y)=
\sum_{m= -2}^3 e^{i my^2/4}y^{-2i\alpha_0 \nu (2m-1)} G_{2, m}(y),\\
&\partial_y^l G_{1,1}(y)=O(y^{-2+5\nu-l}),\\
&\partial_y^l G_{2,m}(y)=O(y^{-5-5\nu-|m|(1-2\nu)-l}),\quad  -2\leq m\leq 3,
\end{split}
\end{equation*}
for any $l\geq 0$, provided $\nu$ is sufficiently small.

Integrating \eqref{eq-A21} one gets
\begin{equation}    \label{A-hat}
\widehat{A}_{2,0}(y) = \lambda_1 f_1(y,\mu_2) +\lambda_2 f_2(y,\mu_2) +d_2^2g_1(y) + g_2(y).
\end{equation}
Here $\lambda_i$, $ i=1,2,$ is a constant and
 $g_i$, $i=1,2$, is the solution of $( \mathcal{L} +\mu_2)g_i = G_i$,  with the following behavior as $y\rightarrow +\infty$:
\begin{equation}    \label{g-at-inf}
\begin{array}{lll}
g_1(y)=e^{i  y^2/4} y^{-2i\alpha_0}g_{1,1}(y),\\
g_2(y)=
\sum_{m= -2}^3 e^{i my^2/4}y^{-2i\alpha_0 \nu (2m-1)} g_{2, m}(y),\\
\partial_y^l g_{1,1}(y)=O(y^{-2+5\nu-l}),\\
\partial_y^l g_{2,m}(y)=O(y^{-5-5\nu-m(1-2\nu)-l}),\quad  m=0,1\\
\partial_y^l g_{2,m}(y)=O(y^{-7-5\nu-|m|(1-2\nu)-l}),\quad  m=-2,-1,2,3,
\end{array}
\end{equation}
for any $l\geq 0$.

Denote 
\begin{equation*}\begin{split}
&u^{ap}_{ss}(\rho, t) = t^{-(1 +2\nu)/4} w^{ap}_{ss}(t^{-(1 +2\nu)/2} \rho, t),\\
&\chi^{ap}_{ss}(\rho,t)=u^{ap}_{ss}(\rho, t) -W(\rho),\\
&\mathcal{R}_{ss}(\rho,t)=t^{-5(1+2\nu)/4}S(t^{-(1 +2\nu)/2} \rho, t).
\end{split}
\end{equation*}
The next lemma is  a direct consequence of 
\eqref{match-1}, 
\eqref{ss-2}, \eqref{A-at-inf}, \eqref{ansatz-A21}, \eqref{A-hat} and \eqref{g-at-inf}.

\begin{lemma}   \label{behav-psi-hat}
For any $\alpha_0, \nu  \in \R$ sufficiently small  there exists $T(\alpha_0,\nu)> 0$ such that for $t \geq T(\alpha_0,\nu)$ the following  holds.\\
(i) $\chi^{ap}_{ss}(t)$ verifies
\begin{align}\label{ss-1.1}
&\|\chi_{ss}^{ap}(t)\|_{L^\infty(\frac{1}{10} t^{\frac{1}{2} +\nu -\epsilon_1}\leq \rho\leq 10
 t^{\frac{1}{2} +\nu +\epsilon_2})}\leq C t^{-\frac12-\nu},\\
&\|\rho^{-k}\partial_\rho^l\chi_{ss}^{ap}(t)\|_{L^\infty(\frac{1}{10} t^{\frac{1}{2} +\nu -\epsilon_1}\leq \rho\leq 10
 t^{\frac{1}{2} +\nu +\epsilon_2})}\leq Ct^{-1-2\nu},\,\, k+l=1,\label{ss-1.2}\\
&\|\rho^{-k}\partial_\rho^l\chi_{ss}^{ap}(t)\|_{L^\infty( \frac{1}{10}t^{\frac{1}{2} +\nu -\epsilon_1}\leq \rho\leq 10
 t^{\frac{1}{2} +\nu +\epsilon_2})}\leq C(|\alpha_0|+|\nu|)t^{-1-2\nu},\,\,   k+l=2,\label{ss-1.3}\\
&\|\rho^{-k}\partial_\rho^l\chi_{ss}^{ap}(t)\|_{L^2( \rho^2d\rho,\frac{1}{10}t^{\frac{1}{2} +\nu -\epsilon_1}\leq \rho\leq 10
 t^{\frac{1}{2} +\nu +\epsilon_2})}\leq Ct^{-(1+2\nu)(1-2\varepsilon_2)/4},
\,\, 1\leq k+l\leq 2,\label{ss-1.4}
\end{align}
(ii) The error $\mathcal{R}_{ss}(t)$ admits the estimate
\be\label{ss-1.5}
\|\rho^{-k}\partial_\rho^l\mathcal R_{ss}(t)\|_{L^2( \rho^2d\rho,\frac{1}{10}t^{\frac{1}{2} +\nu -\epsilon_1}\leq \rho\leq 10
 t^{\frac{1}{2} +\nu +\epsilon_2})}\leq Ct^{-(2+\frac14)(1+2\nu)+5\varepsilon_1/2},\,\, 0\leq k+l\leq 2.
\ee
(iii) The difference $u^{ap}_{in}(\rho, t)-u^{ap}_{ss}(\rho,t)$ verifies
\be\label{ss-1.6}
|\partial_\rho^l(u^{ap}_{in}(t)-u^{ap}_{ss}(t))|\leq C\rho^{-2-l}
t^{-(1+2\nu)}(\ln t+t^{3(1+2\nu)/2-(2N+3)\varepsilon_1}),
\ee
for any $l\geq 0$ and $\frac1{10}t^{\frac{1}{2} +\nu -\epsilon_1}\leq \rho\leq 10
 t^{\frac{1}{2} +\nu -\epsilon_1}$.
\end{lemma}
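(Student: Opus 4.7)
The plan is to pass to the self-similar variable $y=t^{-(1+2\nu)/2}\rho$, in which
\begin{equation*}
u^{ap}_{ss}(\rho,t)=t^{-(1+2\nu)/2}A_{0,0}(y)+t^{-(1+2\nu)}A_{1,0}(y)+t^{-3(1+2\nu)/2}\bigl(A_{2,0}(y)+(\ln y+(\tfrac12+\nu)\ln t)A_{2,1}(y)\bigr).
\end{equation*}
The annulus $\frac1{10}t^{1/2+\nu-\varepsilon_1}\leq\rho\leq 10t^{1/2+\nu+\varepsilon_2}$ becomes $y\in[\frac1{10}t^{-\varepsilon_1},10t^{\varepsilon_2}]$, and each $\rho$-derivative produces an extra factor $t^{-(1+2\nu)/2}$. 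The proof is then pure bookkeeping based on the small- and large-$y$ expansions of the $A_{n,l}$ collected in \eqref{A-at-0}, \eqref{A-at-inf}, \eqref{ansatz-A21}, \eqref{A-hat}, and \eqref{g-at-inf}.

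For part (i), I would split the annulus at $y=1$. On $y\leq 1$ the expansion \eqref{A-at-0}, combined with $A_{0,0}=\alpha^{(0)}_{0,-1}e_1(\cdot,\mu_0)$ and $e_1(y,\mu_0)=1/y+(\mu_0-i/4)\tilde e_1(y,\mu_0)$, gives $u^{ap}_{ss}-W=O(|\alpha_0|+|\nu|)$ uniformly, because the leading $1/y$ terms of $t^{-(1+2\nu)/2}A_{0,0}$ and of $W(t^{(1+2\nu)/2}y)$ cancel exactly, while the surviving pieces of $A_{0,0}$ carry the factor $\mu_0-i/4=\alpha_0+i\nu/2$ and those from $A_{1,0}$, $A_{2,0}$, $A_{2,1}$ inherit smallness in $(\alpha_0,\nu)$ via Remark \ref{rem-chi}. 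On $y\geq 1$ the bounds \eqref{A-at-inf}--\eqref{g-at-inf} give at most polynomial growth of each $A_{n,l}$, so pointwise estimates for $y\leq t^{\varepsilon_2}$ convert to the claimed $t$-powers in \eqref{ss-1.1}--\eqref{ss-1.3}. The $L^2$ bound \eqref{ss-1.4} then follows by direct integration in $\rho^2\,d\rho$, the exponent $-(1+2\nu)(1-2\varepsilon_2)/4$ coming from the measure of the annulus at its outer edge.

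For part (ii) the crucial point is that the terms $S_{n,l}$ in \eqref{ss-1} vanish identically by construction of $A_{n,l}$ as solutions of \eqref{syst_A}, so $\mathcal{R}_{ss}=t^{-5(1+2\nu)/4}S$ is purely a nonlinear residual. Expanding $|w^{ap}_{ss}|^4 w^{ap}_{ss}$ about its leading term $t^{-(1+2\nu)/4}A_{0,0}$, every remaining contribution carries at least one extra factor $t^{-(1+2\nu)/2}$, so $\mathcal{R}_{ss}$ is of order $t^{-9(1+2\nu)/4}$ times a function of $y$ controlled by \eqref{A-at-inf}--\eqref{g-at-inf}; after applying the worst $\rho^{-k}$ weight at the lower edge $\rho\sim t^{1/2+\nu-\varepsilon_1}$ and integrating against $\rho^2\,d\rho$ one obtains \eqref{ss-1.5} with the $5\varepsilon_1/2$ loss. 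For part (iii), in the overlap $y\sim t^{-\varepsilon_1}$ both $u^{ap}_{in}$ and $u^{ap}_{ss}$ are truncations of the same double series \eqref{*}; by Lemma \ref{sss-1} and the matching condition \eqref{match-1} the small-$y$ expansions of $A_{n,l}$ reproduce exactly the rearrangement of the large-$\rho$ expansions of the $\chi_k$ restricted to $n\leq 2$, so the discrepancy at $y\sim t^{-\varepsilon_1}$ is a combination of the tail $n\geq 3$ of the self-similar series (producing the factor $\ln t$ through the logarithms $(\ln y+(\tfrac12+\nu)\ln t)^l$) and the truncation of $\chi_k$'s large-$\rho$ asymptotics at $k=N=27$ (producing the factor $t^{3(1+2\nu)/2-(2N+3)\varepsilon_1}$ from the worst $\rho^{2k-1}$ growth). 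The main obstacle is the matching in (iii): one has to verify that, after reindexing, the truncated inner and self-similar sums agree to the order claimed, carefully tracking the logarithmic powers and the exact coefficients dictated by \eqref{match-1} so that only the advertised tails survive.
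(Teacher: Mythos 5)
Your proposal is correct and coincides with the paper's approach: the paper gives no written proof of this lemma beyond declaring it a direct consequence of \eqref{match-1}, \eqref{ss-2}, \eqref{A-at-inf}, \eqref{ansatz-A21}, \eqref{A-hat} and \eqref{g-at-inf}, i.e.\ precisely the bookkeeping in the variable $y=t^{-(1+2\nu)/2}\rho$ that you carry out, including the cancellation of the leading $1/\rho$ tails in part (i) and the identification of the two series tails (the $n\geq 3$ part of the inner expansion and the $k>N$ part of the self-similar one) in part (iii). One small arithmetic slip: since $S=O(t^{-7(1+2\nu)/4})$ times $y$-profiles and $\mathcal R_{ss}=t^{-5(1+2\nu)/4}S$, the pointwise order of $\mathcal R_{ss}$ is $t^{-3(1+2\nu)}$ times a profile growing like $y^{-4}$ at the inner edge, not $t^{-9(1+2\nu)/4}$, but your final $L^{2}$ exponent $-\frac94(1+2\nu)+\frac52\varepsilon_1$ is nevertheless the right one.
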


\subsection{The remote region}
We next consider the remote region
$|x| \geq \frac{1}{10}t^{1/2+\varepsilon_2}$. In this region we take as  an approximate solution to \eqref{1} 
the following radial profile:
$$\psi^{ap}_{out}(x, t) = v_1(x, t) +
v_2(x, t) +v_3(x, t),$$
where
$$\begin{array}{ll}
&v_1(x, t) =  e^{i \alpha_0  \ln t} [d_1^0t^{-(1 +\nu)/2} f_1(y,\mu_0) +d_1^1t^{-(2+3\nu)/2} f_1(y,\mu_1)],\quad y=t^{-1/2}|x|,\\
&v_2(x, t)=\Theta_\delta \left( \frac{x}{t} \right) 
 e^{i \alpha_0  \ln t} \left[ d_2^0 t^{-(1 +\nu)/2} f_2(y,\mu_0) +d_2^1t^{-(2 +3\nu)/2} f_2(y,\mu_1) +\right.\\
&+t^{-(3 +5\nu)/2} \left.\left( d_2^2g_1(y)-\big(d_2^2(2\nu+1)\ln\left(\frac{|x|}{t}\right)-\lambda_2\big)f_2(y,\mu_2)
\right) \right],
\end{array}$$
$\Theta_\delta(\xi)=\Theta(\frac\xi\delta) $, 
$\Theta \in C^{\infty}_0(\R^3)$ is radial, $\Theta(\xi) = \left\{ \begin{array}{ll}
1 & \textrm{if} \,\, |\xi| \leq 1\\
0 & \textrm{if} \,\, |\xi| \geq 2
\end{array} \right.$.\\
Finally, $v_3(x,t)$ is given by
$$v_3(x,t)=\frac{e^{i\frac{|x|^2}{4t}}}{t^{5/2}}\hat v_3\left( \frac{x}{t} \right),\quad
\hat v_3=-iz\Delta \Theta_\delta-2i\nabla z\cdot \nabla \Theta_\delta,$$
where
$$z(\xi)=d_2^0|\xi|^{-2i\alpha_0-2+\nu}+d_2^1|\xi|^{-2i\alpha_0-1+3\nu}-(d_2^2(2\nu+1)
\ln|\xi|-\lambda_2)|\xi|^{-2i\alpha_0+5\nu}.$$

It follows from the asymptotics \eqref{ss-2} that for 
$t\geq T$ with some $ T=T(\delta)>0$ and any $ l\geq 0$, one has
\be\label{rem-1}\begin{split}
&|\nabla^lv_1(x,t)|\leq C_l|x|^{-l-1-\nu},\quad \frac{1}{10}t^{1/2+\varepsilon_2}\leq |x|,\\
&|\nabla^lv_2(x,t)|\leq\frac{C_l}{t^{3/2}}\left|\frac{x}{t}\right|^{l-2+\nu},
\quad \frac{1}{10}t^{1/2+\varepsilon_2}\leq |x|\leq 2\delta t.
\end{split}
\ee
Furthermore, $v_2$ can be written as
\be\label{rem-2}\begin{split}
&v_2(x,t)=v_{2,0}(x,t)+v_{2,1}(x,t),\\
&v_{2,0}(x,t)=\frac{e^{i\frac{|x|^2}{4t}}}{t^{3/2}}\Theta_\delta \left( \frac{x}{t} \right) 
z\left(\frac{x}{t} \right),\quad
v_{2,1}(x,t)=\frac{e^{i\frac{|x|^2}{4t}}}{t^{3/2}}\Theta_\delta \left( \frac{x}{t} \right) \hat v_{2,1}(x,t),
\end{split}
\ee
with $\hat v_{2,1}$ verifying, for any  $ l\geq 0$,
\be\label{rem-3}
|\nabla^l\hat v_{2,1}(x,t)|\leq {C_l} t^{3-\nu}|{x}|^{-l-4+\nu},
\quad \frac{1}{10}t^{1/2+\varepsilon_2}\leq |x|\leq 2\delta t.
\ee

We next address $v_3$.  One has
\be\label{rem-4}\begin{split}
&\|\nabla^lv_3(t)\|_{L^\infty(|x|\geq \frac{1}{10}t^{1/2+\varepsilon_2})}\leq
C_lt^{-5/2}\delta^{-4+l+\nu},\\
&\|\nabla^lv_3(t)\|_{L^2(|x|\geq \frac{1}{10}t^{1/2+\varepsilon_2})}\leq
C_lt^{-1}\delta^{-5/2+l+\nu},
\end{split}
\ee
for any $l\geq 0$ and  $t\geq T(\delta)$.

As a direct consequence of estimates \eqref{rem-1}, \eqref{rem-3},  \eqref{rem-4}, one obtains
\be\label{rem-5}\begin{split}
&\|\psi^{ap}_{out}(t)\|_{L^\infty(|x|\geq \frac{1}{10}t^{\frac12+\varepsilon_2})}\leq
Ct^{-(\frac12+\varepsilon_2)(1+\nu)},\\
&\||x|^{-1}\psi^{ap}_{out}(t)\|_{L^\infty(|x|\geq \frac{1}{10}t^{\frac12+\varepsilon_2})}\leq
Ct^{-5/4},\\
&
\|\nabla^l\psi^{ap}_{out}(t)\|_{L^\infty(|x|\geq \frac{1}{10}t^{\frac12+\varepsilon_2})}\leq
Ct^{-5/4},\quad l=1,2,\\
&\|\nabla^l\psi^{ap}_{out}(t)\|_{L^2(|x|\geq \frac{1}{10}t^{\frac12+\varepsilon_2})}\leq
C\delta^{\nu+l-1/2},\quad l=1,2,\\
&\|\nabla^l(\psi^{ap}_{out}(t)-v_{2,0}(t))\|_{L^2(|x|\geq \frac{1}{10}t^{\frac12+\varepsilon_2})}\leq
Ct^{-\frac12(\frac12+\varepsilon_2)(1+2\nu)},\quad l=1,2,\\
&\||x|^{-1}(\psi^{ap}_{out}(t)-v_{2,0}(t))\|_{L^2(|x|\geq \frac{1}{10}t^{\frac12+\varepsilon_2})}\leq
Ct^{-\frac12(\frac12+\varepsilon_2)(1+2\nu)},
\end{split}
\ee
provided $\frac38\leq \varepsilon_2<\frac12$, $\nu$ is sufficiently small
and $t\geq T(\delta).$

Denote 
 $$\psi^{ap}_{ss}(x, t) = e^{i \alpha_0  \ln t} t^{-1/4} {w}_{ss}^{ap}(t^{-1/2}|x|, t),$$
and consider the difference $\psi^{ap}_{ss}(x, t)-\psi^{ap}_{out}(x, t)$. For
$\frac{1}{10}t^{1/2+\varepsilon_2} \leq |x| \leq 10 t^{1/2+\varepsilon_2}$ one has
\begin{equation}    \label{diff-ss-out}
\psi^{ap}_{ss}(x, t) -\psi^{ap}_{out}(x, t) = e^{i \alpha_0 \ln t} 
t^{-(3 +5\nu)/2} ((d_1^2(1+2\nu) \ln|x| +\lambda_1) f_1(y,\mu_2) + g_2(y)) ,
\ee
which together with \eqref{ss-2} and \eqref{g-at-inf} implies that
\begin{equation}    \label{der-diff-ss-out}
|\nabla^l (\psi^{ap}_{out} -\psi^{ap}_{ss})| \leq C_l (|\ln t| t^{-(\frac 12+\varepsilon_2)(3+5\nu+l)}+t^{-(\frac 12+\varepsilon_2)(3+5\nu+1)}),
\end{equation}
 for any $l\geq 0$ and  
$\frac{1}{10}t^{1/2+\varepsilon_2} \leq |x| \leq 10 t^{1/2+\varepsilon_2}$,
provided  $\nu$ is sufficiently small.

We next analyze the error
${R}_{out}(t) = -i \partial_t\psi^{ap}_{out}(t) -\triangle \psi^{ap}_{out}(t) -|\psi^{ap}_{out}(t)|^4 \psi^{ap}_{out}(t)$. It has the form
\begin{equation}\label{R-out}
\begin{split}
{R}_{out}(x, t)& = -\frac{e^{i\frac{|x|^2}{4t}}}{t^{9/2}}\left[t\hat v_{2,1}(x,t)\Delta \Theta_\delta\left( \frac{x}{t} \right)+2t^2\nabla \hat v_{2,1}(x,t)\cdot \nabla\Theta_\delta\left( \frac{x}{t} \right)\right.\\
&\left.
+\Delta\hat v_3\left( \frac{x}{t} \right)\right]
-|\psi^{ap}_{out}|^4 \psi^{ap}_{out}.
\end{split}
\end{equation}
Combined with \eqref{rem-1}, \eqref{rem-3}, \eqref{rem-4}, representation \eqref{R-out}
gives for $\frac38\leq \varepsilon_2<\frac12$ and $\nu$  sufficiently small,
\begin{equation}\label{R-out-est}
\|\nabla^l R_{out}(t)\|_{L^2(|x|\geq \frac{1}{10}t^{1/2+\varepsilon_2})}\leq
Ct^{-\frac94(1+2\nu)},\quad t\geq T(\delta),\quad l=0,1,2.
\ee
\subsection{Proof of Proposition \ref{p1}}
We are now in position to conclude the proof of Prop. \ref{p1}. 
Fix $\varepsilon_2$ such that $\frac38\leq \varepsilon_2<\frac12$ and consider the radial profile $\psi^{ap}(x,t)$ defined by
\begin{equation*}
\begin{split}
\psi^{ap}(x,t)=&\Theta(t^{-1/2+\varepsilon_1}x)\psi^{ap}_{in}(x,t)+
(1-\Theta(t^{-1/2+\varepsilon_1}x))\Theta(t^{-1/2-\varepsilon_2}x)\psi^{ap}_{ss}(x,t)\\
&+
(1-\Theta(t^{-1/2-\varepsilon_2}x))\psi^{ap}_{out}(x,t),\quad x\in \R^3,
\end{split}
\end{equation*}
where
$
\psi^{ap}_{in}(x,t)=e^{i\alpha_0\ln t}t^{\nu/2}u^{ap}_{in}(t^\nu |x|,t)$.
Write $\psi^{ap}$ as $\psi^{ap}(x,t)=e^{i\alpha_0\ln t}t^{\nu/2}(W(y)+\chi^{ap}(y,t))$,
$y=t^\nu x$.
By Lemma \ref{behav-chi} (estimates \eqref{inn-1}, \eqref{inn-2}), Lemma \ref{behav-psi-hat} (estimates \eqref{ss-1.1}, \eqref{ss-1.2}, \eqref{ss-1.3})
and \eqref{rem-5} one has
\begin{align}
&\|\chi^{ap}(t)\|_{L^\infty}\leq Ct^{-(1+2\nu)/2}
\label{pp1.1.2}\\
&\||y|^{-1}\chi^{ap}(t)\|_{L^\infty}+ \|\nabla\chi^{ap}(t)\|_{L^\infty}\leq C t^{-1-2\nu},\label{pp1.1.3}\\
&\||y|^{-2}\chi^{ap}(t)\|_{L^\infty}+ \||y|^{-1}\nabla_y\chi^{ap}(t)\|_{L^\infty}\leq C  (|\nu|+|\alpha_0|)t^{-1-2\nu},\label{pp1.1.31}\\
&\|\nabla^2\chi^{ap}(t)\|_{L^\infty}\leq C  (|\nu|+|\alpha_0|)t^{-1-2\nu}.\label{pp1.1.4}
\end{align}
All the estimates stated in this subsection are valid for $\nu$
sufficiently small and $t\geq T(\alpha_0,\nu,\delta)$.

Futhermore, it follows from  Lemma  \ref{behav-chi} (estimate \eqref{inn-4}),
Lemma \ref{behav-psi-hat} (estimate \eqref{ss-1.3}) and two last inequalities in \eqref{rem-5}
that
\be\label{p1-end-2}\begin{split}
\|\nabla^l\chi^{ap}(t)\|_{L^2(|y|\leq 10t^{1/2+\nu+\varepsilon_2})}\leq Ct^{-(1+2\nu)(1-2\varepsilon_2)/4},\quad l=1, 2,\\
\|\nabla^l(\chi^{ap}(t)-\chi^{ap}_0(t))\|_{L^2(|y|\geq t^{1/2+\nu+\varepsilon_2})}\leq 
Ct^{-(1+2\nu)/4},\quad l=1,2,
\end{split}
\ee
where
$\chi^{ap}_0(y,t)=e^{-i\alpha_0\ln t}t^{-\nu/2}v_{2,0}(t^{-\nu}y,t)$.

Inequalities \eqref{p1-end-2} imply, in particular, 
$$\|\nabla^l\chi^{ap}(t)\|_{L^2(\R^3)}\leq Ct^{-\nu(l-1)}\delta^{\nu+l-1/2},\quad l=1, 2.
$$
Moreover, introducing
$\zeta^*(x)=\pi^{-3/2}e^{3i\pi/4}\int_{\R^3}d\xi e^{ix\cdot \xi}\Theta_\delta(2\xi)z(2\xi)$
and observing that $\zeta^*\in \dot H^s(\R^3)$ for any $s>1/2-\nu$, and
$\|\nabla^l (v_{2,0}-e^{i\Delta t}\zeta^*)\|_{L^2(|x|\geq t^{\gamma})}\rightarrow 0$  as
$t\rightarrow +\infty$
for any $\gamma>\frac{1-2\nu}{3-2\nu}$ and any $l\geq 1$,
one obtains
that
$$e^{i\a(t)}\l^{1/2}(t)\chi^{ap}(\l(t)\cdot, t)- e^{it\Delta}\zeta^*\rightarrow 0 \,\,\, {\rm in}\,\,\,\dot H^1\cap \dot H^{2} \,\,\, {\rm as}\,\,\, t\rightarrow +\infty.$$
 This concludes the proof of the first part of Prop. \ref{p1}.

We next consider the error $R=-i\psi^{ap}_t-\Delta \psi^{ap}-|\psi^{ap}|^4\psi^{ap}$.
It has the form
$$R=E_1+E_2+E_3+E_4.$$
where
\begin{equation*}
\begin{split}
E_1=&i(\frac12-\varepsilon_1)t^{-1}(\psi^{ap}_{in}(x,t)-
\psi^{ap}_{ss}(x,t))\tilde\Theta(t^{-1/2+\varepsilon_1}x)\\
&-2t^{-1/2+\varepsilon_1}(\nabla\psi^{ap}_{in}(x,t)-
\nabla\psi^{ap}_{ss}(x,t))\cdot \nabla\Theta(t^{-1/2+\varepsilon_1}x)\\
&-t^{-1+2\varepsilon_1}(\psi^{ap}_{in}(x,t)-
\psi^{ap}_{ss}(x,t))\Delta\Theta(t^{-1/2+\varepsilon_1}x),\\
E_2=&i(\frac12+\varepsilon_2)t^{-1}(\psi^{ap}_{ss}(x,t)-
\psi^{ap}_{out}(x,t))\tilde\Theta(t^{-1/2-\varepsilon_2}x)\\
&-2t^{-1/2-\varepsilon_2}(\nabla\psi^{ap}_{ss}(x,t)-
\nabla\psi^{ap}_{out}(x,t))\cdot \nabla\Theta(t^{-1/2-\varepsilon_2}x)\\
&-t^{-1-2\varepsilon_2}(\psi^{ap}_{ss}(x,t)-
\psi^{ap}_{out}(x,t))\Delta\Theta(t^{-1/2-\varepsilon_2}x),\\
&\tilde \Theta(\xi)=\xi\cdot\nabla\Theta(\xi),
\end{split}
\end{equation*}
and $E_3$, $E_4$ are given by
 \begin{equation*}
\begin{split}
E_3=&\Theta(t^{-1/2+\varepsilon_1}x)R_{in}(x,t)+
(1-\Theta(t^{-1/2+\varepsilon_1}x))\Theta(t^{-1/2-\varepsilon_2}x)R_{ss}(x,t)\\
&+
(1-\Theta(t^{-1/2-\varepsilon_2}x))R_{out}(x,t),\\
E_4=&\Theta(t^{-1/2+\varepsilon_1}x)(|\psi^{ap}_{in}|^4\psi^{ap}_{in}-|\psi^{ap}|^4\psi^{ap})\\
&+(1-\Theta(t^{-1/2+\varepsilon_1}x))\Theta(t^{-1/2-\varepsilon_2}x)(|\psi^{ap}_{ss}|^4\psi^{ap}_{ss}-|\psi^{ap}|^4\psi^{ap})\\
&+
(1-\Theta(t^{-1/2-\varepsilon_2}x))(|\psi^{ap}_{out}|^4\psi^{ap}_{out}-|\psi^{ap}|^4\psi^{ap}).
\end{split}
\end{equation*}
Here
$$R_{in}(x,t)=e^{i\alpha_0\ln t}t^{5\nu/2}\mathcal R_{in}(t^\nu |x|,t),\quad
R_{ss}(x,t)=e^{i\alpha_0\ln t}t^{5\nu/2}\mathcal R_{ss}(t^\nu |x|,t).$$
First we adress $E_1$. By Lemma \ref{behav-psi-hat} (iii)
we have 
\begin{equation}\label{E-1}
\|E_1\|_{H^2}\leq Ct^{-9(1+2\nu)/4+\nu+5\varepsilon_1/2}\ln t\leq C t^{-(2+\frac3{20})(1+2\nu)}.
\ee
Similarly,
from \eqref{der-diff-ss-out} we get for $E_2$:
\begin{equation}\label{E-2}
\|E_2\|_{H^2}\leq Ct^{-1-(\frac 12+\varepsilon_2)(\frac32+5\nu)}\ln t\leq C t^{-(2+\frac14)(1+2\nu)}.
\ee
Next, we consider $E_3$.
From Lemma \ref{behav-chi} (ii) ,  Lemma \ref{behav-psi-hat} (ii) and 
\eqref{R-out-est} it is apparent that 
\be\label{E-3}
\|E_3\|_{H^2}\leq
Ct^{-\frac94(1+2\nu)+5\varepsilon_1/2}\leq C t^{-(2+\frac3{20})(1+2\nu)}.
\ee
Finally, applying Lemma  \ref{behav-chi} (estimates \eqref{inn-1}, \eqref{inn-2}), Lemma \ref{behav-psi-hat} (estimates \eqref{ss-1.1},  \eqref{ss-1.2},\eqref{ss-1.3},\eqref{ss-1.6}) 
and \eqref{rem-5}, \eqref{der-diff-ss-out}, it is not difficult to check that
\be\label{E-4}
\| E_4\|_{H^2}\leq C t^{-3(1+2\nu)}.
\ee
Combining \eqref{E-1}, \eqref{E-2}, \eqref{E-3}, \eqref{E-4},
we get \eqref{p1.2}, which concludes the proof of Prop. \ref{p1}.
\section{Construction of an exact solution} 
We are now in position to prove Theorem \ref{mth}.
Consider \eqref{1} and write 
$\psi(x,t)=e^{i\alpha_0\ln t}t^{\nu/2}U(y,\tau),$
where $y=t^\nu x$ and $\tau=\frac{t^{1+2\nu}}{1+2\nu}$. Further decomposing
$U$ as 
$$U(y,\tau)=U^{ap}(y,\tau)+f(y,\tau),\quad U^{ap}(y,\tau)=e^{-i\alpha_0\ln t}t^{-\nu/2}\psi^{ap}(x,t),$$
where $\psi^{ap}$ is the approximate solution of \eqref{1} given by Prop. \eqref{p1},
we get the following equation for the remainder $f$
\be\label{eq-f}
i \vec f_{\tau} =\mathcal H(\tau)\vec  f+\mathcal F(f) +r,\quad \vec f={f\choose \bar f},
\ee
where
\begin{equation*}
\begin{split}
&\mathcal H(\tau) =H+\tau^{-1}l, \\
&H = -\triangle\sigma_3 -3W^4 \sigma_3-2W^4 \sigma_3 \sigma_1,
\quad l=\frac{\alpha_0}{2\nu+1}\sigma_3-i\frac{\nu}{2\nu+1}(\frac12+y\cdot \nabla),\\
&\mathcal F(f)={F(f)\choose-\overline{F(f)}},\quad F(f)=F_1(f)+F_2(f)\\
&F_1(f)=\mathcal V_1(\tau)f+\mathcal V_2(\tau) \overline{f},\\
&\mathcal V_1(\tau)=3(W^4-|U^{ap}(\tau)|^4),\quad 
\mathcal V_2(\tau)=2(W^4- (U^{ap}(\tau))^2 |U^{ap}(\tau)|^2),\\
&F_2(f)=-|U^{ap} +f|^4 (U^{ap} +f) +|U^{ap}|^4 U^{ap} +
3 |U^{ap}|^4 f +2 (U^{ap})^2 |U^{ap}|^2 \overline{f},\\
&r = \left( \begin{array}{cc}
\texttt{r}\\
-\overline{\texttt{r}}
\end{array} \right), \quad \texttt{r}(y,\tau) = t^{-5\nu/2}e^{-i\alpha_0\ln t}R(x, t).
\end{split}
\end{equation*}
$R$ being the error
given by Prop. \ref{p1}.
 Note that  by Prop. \ref{p1} one has
\begin{align}
&\|\mathcal V_i(\tau)\|_{W^{2,\infty}(\R^3)}\leq C(|\alpha_0|+|\nu|)\tau^{-1},\quad i=1,2,\label{pot}\\
&\|U^{ap}(\tau)\|_{W^{2,\infty}(\R^3)}\leq C,\label{U-ap}\\
&\|r(\tau)\|_{H^2(\R^3)}\leq C\tau^{-2-\frac18}\label{r},
\end{align}
for any $\tau\geq \tau_0$ with some $\tau_0>0$.

Our intention is to solve \eqref{eq-f} with zero condition at $\tau=+\infty$ by a fix point argument.
To carry out this analysis we  will need  some energy type estimates for the linearized equation
$i \vec f_{\tau} =\mathcal H(\tau)\vec  f$. The required estimates are collected in the next subsection,
their proofs being removed to Section 4.
\subsection{Linear estimates}
We start by recalling some basic spectral properties of the  operator $H$
(a more detailed discussion and the proofs can be found, for example, in \cite{DM}).
Since we are considering only radial solutions, 
we will view  $H$ as an operator on $L^2_{rad}(\R^3;\C^2)$ with domain 
$D(H)=H^2_{rad}(\R^3;\C^3)$. $H$ satisfies the  relations
$$\sigma_3H\sigma_3=H^*,\quad \sigma_1H\sigma_1=-H.$$ 
The essential spectrum of $H$ fills up the real axis. 
The discrete spectrum of $H$ consists of 
 two simple purely imaginary eigenvalues $i\lambda_0$, $-i\lambda_0$, 
$\lambda_0>0$.  The corresponding eigenfunctions
$\zeta_+$, $\zeta_-$ are in $\mathcal S(\R^3)$ and can be chosen in such a way that
$\zeta_-=\sigma_1\zeta_+=\bar \zeta_+$.
Notice also that
$HW{1\choose-1}=HW_1{1\choose 1}=0$.
which means that $H$ has a resonance at zero.

Consider the projection of the  linearized equation $i \vec f_{\tau} =\mathcal H(\tau)\vec  f$
onto the essential spectrum of $H$:
\be\label{lin-eq}
i \vec f_{\tau} =P\mathcal H(\tau)P\vec  f.
\ee
Here $P$ is the spectral projection of $H$ onto the essential spectrum given by
$$P=I-P_+-P_-,\quad P_\pm=\frac{\left<\cdot, \sigma_3 \zeta_\mp\right>}{\left<\zeta_\pm, \sigma_3 \zeta_\mp\right>}\zeta_\pm,$$
$\left<\cdot,\cdot\right>$ is the scalar product in $L^2(\R^3,\C^2)$.

Let $U(\tau,s)$ be the propagator associated to Eq. \eqref{lin-eq}. In Section 4 we prove
the following results.
\begin{proposition}\label{lin}
There exists a constant $C > 0$ such that
$$\| U(\tau, s) f \|_{H^2} \leq C \left( \frac{s}{\tau} \right)^{C(|\alpha_1|+|\nu_1|)} \| f\|_{H^2},$$
for any $s\geq \tau>0$  and any $f\in H^2_{rad}$.
Here $\alpha_1=\frac{\alpha_0}{1+2\nu}$,  $\nu_1=\frac{\nu}{1+2\nu}$.
\end{proposition}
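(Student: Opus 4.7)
The strategy is an energy method using quadratic forms attached to $H$. Since $\sigma_3H\sigma_3=H^*$, one checks that $\sigma_3H^n$ is self-adjoint for every $n\ge 0$. Hence the real quadratic forms
\[
Q_1(\vec f)=\langle \sigma_3H\vec f,\vec f\rangle,\qquad Q_2(\vec f)=\langle \sigma_3H^2\vec f,\vec f\rangle
\]
are exactly conserved by the unperturbed flow $e^{-isH}$, since $\frac{d}{ds}Q_j(e^{-isH}\vec f)=2\Im\langle\sigma_3H^{j+1}\vec f,\vec f\rangle=0$. On the radial continuous-spectrum subspace $\Ran P$, standard coercivity for the ground-state linearization (cf.\ \cite{KM,DM}) yields $Q_k(\vec f)+\|\vec f\|_{L^2}^2\simeq \|\vec f\|_{H^k}^2$ for $k=1,2$, after accounting for the zero resonances $W\binom{1}{-1}$ and $W_1\binom{1}{1}$, which lie in $\dot H^1$ but not in $L^2$.

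\textbf{Energy derivative and Gronwall.} Since $P$ commutes with $H$, the propagator equation on $\Ran P$ reads $\vec f_\tau=-iH\vec f-i\tau^{-1}PlP\vec f$, and a direct computation exploiting the self-adjointness of $\sigma_3H^{j+1}$ gives
\[
\frac{d}{d\tau}Q_j(\vec f)=2\tau^{-1}\Im\langle \sigma_3H^jPlP\vec f,\vec f\rangle,\qquad j=1,2.
\]
Once the right-hand side is bounded by $C(|\alpha_1|+|\nu_1|)\tau^{-1}Q_j(\vec f)$, Gronwall from $s$ down to $\tau\le s$ produces $Q_j(\vec f(\tau))\le (s/\tau)^{C(|\alpha_1|+|\nu_1|)}Q_j(\vec f(s))$, which via coercivity controls the $\dot H^j$ components. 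A parallel argument for $\|\vec f\|_{L^2}^2$, using that $e^{-isH}$ is uniformly $L^2$-bounded on $\Ran P$ (the only non-real spectrum $\pm i\lambda_0$ having been projected away), supplies the $L^2$ piece and completes the $H^2$ estimate.

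\textbf{The main obstacle.} The delicate step is to verify that the perturbation $l=\alpha_1\sigma_3-i\nu_1\Lambda$, with $\Lambda=\tfrac12+y\cdot\nabla$, contributes only $C(|\alpha_1|+|\nu_1|)\tau^{-1}Q_j$. The bounded multiplication piece $\alpha_1\sigma_3$ commutes with $H$, and since $\sigma_3H^j\sigma_3$ is self-adjoint the unprojected contribution to $\Im\langle\sigma_3H^jlf,f\rangle$ actually vanishes; the projection mismatch $[P,\sigma_3]$ produces only an $L^2$-type remainder of size $C|\alpha_1|$. For the unbounded scaling piece the key algebraic identity is
\[
[H,\Lambda]=-2H+B,
\]
with $B$ a bounded multiplication-type operator coming from $[\Delta,y\cdot\nabla]=2\Delta$ and $[W^4,y\cdot\nabla]=-y\cdot\nabla W^4\in L^\infty$; iteration yields $[H^j,\Lambda]=-2jH^j+B_j$ with $B_j$ of lower order. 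Combined with $\Lambda^*=-\Lambda-2$, this lets one evaluate $\Im\langle\sigma_3H^j\Lambda\vec f,\vec f\rangle$ as a multiple of $Q_j(\vec f)$ plus remainders bounded by $Q_j(\vec f)+\|\vec f\|_{L^2}^2$. Tracking the commutators $[P,\Lambda]$ and $[P,\sigma_3]$, which are bounded since $\zeta_\pm\in\mathcal S(\R^3)$, contributes only harmless $L^2$-type errors of size $C|\nu_1|$. This commutator-and-bookkeeping calculation---and the verification that no unbounded remainder survives---is the main technical content of the proof.
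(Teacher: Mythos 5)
Your overall architecture (a monotone quantity built from powers of $H$ paired with $\sigma_3$, a Gronwall argument in $\tau$, and commutator bookkeeping for the dilation generator) matches the paper's, and your commutator identities for $\Lambda$ are correct. But there is a genuine gap at the step you dispose of in one sentence: the claim that on $\Ran P$ one has $Q_k(\vec f)+\|\vec f\|_{L^2}^2\simeq\|\vec f\|_{H^k}^2$ ``after accounting for the zero resonances.'' This equivalence is false as stated. The resonance functions $W\binom{1}{-1}$, $W_1\binom{1}{1}$ lie in $\dot H^1\setminus L^2$, so they are not (and cannot be) removed by the $L^2$ spectral projection $P$ onto the essential spectrum; there are functions in $H^1_{rad}\cap\Ran P$ arbitrarily close to the resonance directions on which $\langle H f,\sigma_3 f\rangle$ degenerates relative to $\|f\|_{H^1}^2$. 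Consequently your key inequality $|\Im\langle\sigma_3H^jPlP\vec f,\vec f\rangle|\le C(|\alpha_1|+|\nu_1|)Q_j(\vec f)$ cannot hold (the left side need not vanish when $Q_j$ does), and the Gronwall loop does not close. The same issue undermines the asserted uniform $L^2$-boundedness of $e^{-isH}$ on $\Ran P$: with a threshold resonance this is not standard and is essentially equivalent in difficulty to the proposition itself.

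Repairing this is precisely the content of the bulk of Section 4 of the paper: one constructs Jost solutions and a distorted Fourier transform $\mathbb E_\kappa$ localized near zero energy, introduces quasi-resonant functions $h_\kappa$, proves a quantified coercivity estimate of the form $\langle Hf,\sigma_3 f\rangle\ge C\kappa^3\|f\|_{H^1}^2-\frac{\kappa}{C}\|\mathbb E_\kappa^*\sigma_3 f\|_{L^2}^2$ (Lemmas \ref{4-1}--\ref{H-estimates}), and then works with the corrected functional $G_1=\langle H\psi,\sigma_3\psi\rangle+c_0\|\mathbb E_\kappa^*\sigma_3\psi\|_{L^2}^2$, which \emph{is} equivalent to $\|\psi\|_{H^1}^2$. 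One must then also show that the correction term is almost conserved under the perturbed flow, which requires the structural fact that the dilation generator acts as $i\nu_1 k\partial_k$ plus a bounded remainder on the distorted Fourier side (Lemma \ref{lem-e}(ii)). None of this machinery appears in your argument, and without it the proof does not go through. (A minor further difference: the paper obtains $H^2$ by interpolating between $H^1$ and an $H^3$ functional $\langle H^2\psi,\sigma_3H\psi\rangle$ rather than using $\langle\sigma_3H^2\psi,\psi\rangle$ directly, but that choice is not where the difficulty lies.)
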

\subsection{Contraction argument}

We now transforme \eqref{eq-f} into a fix point problem.
Rewrite \eqref{eq-f} in the following  integral form
\begin{equation}\label{fix-point}
 f(\tau)= J(f)(\tau),
\ee
where 
\begin{equation*}
\begin{split}
& J(f)(\tau)=J_0(f)(\tau)+J_+(f)(\tau)+J_-(f)(\tau),\\
&J_0(f)(\tau )=i\int_\tau^{+\infty} ds U(\tau,s)P(\mathcal F_1(f(s))+r(s)),\\
&J_+(f)(\tau)=i\int_\tau^{+\infty} dse^{\lambda_0(\tau-s)}P_+(\mathcal F_2(f(s))+r(s)),\\
&J_-(f)(\tau)=-i\int_{\tau_1}^{\tau} dse^{-\lambda_0(\tau-s)}P_-(\mathcal F_2(f(s)) +r(s)),\\
&\mathcal F_1(f) =\mathcal F(f)+ s^{-1}l(P_++P_-)\vec f,\\
&\mathcal F_2(f) =\mathcal F(f) +s^{-1}l\vec f,
\end{split}
\end{equation*}
$\tau_1\geq\max\{ \tau_0,1\}$ to be fixed later (slightly abusing notation we identify in \eqref{fix-point}
$\C^2$  vectors of the form  ${f\choose \bar f}$ with their first component $f$).


Our intention is to view $J$ as a mapping in the space 
$C([\tau_1,+\infty), H^2_{rad})$ equipped with the norm
$\||f|\|=\sup_{\tau\geq \tau_1}\|f(\tau)\|_{H^2}\tau^{1+1/16}$,
and to show that $ J$ is contraction of the unite ball
$\||f|\|\leq 1$ into itself provided $|\alpha_0|+|\nu|$ is sufficiently small and 
$\tau_1$ is chosen sufficiently large.
Indeed, by \eqref{U-ap}, \eqref{pot} one has, for any $f,g\in H^2$ with
$\|f\|_{H^2}\leq 1$, $\|g\|_{H^2}\leq 1$, 
$$\|\mathcal F_1(f)-\mathcal F_1(g)\|_{H^2}\leq C(\|f\|_{H^2}+\|g\|_{H^2}+(|\alpha_0|+|\nu|)\tau^{-1})\|f-g\|_{H^2},$$
$$\|P_\pm(\mathcal F_2(f)-\mathcal F_2(g))\|\leq C(\|f\|_{H^2}+\|g\|_{H^2}+(|\alpha_0|+|\nu|)\tau^{-1})\|f-g\|_{H^2},$$
which together with \eqref{r} and Prop. \ref{lin} gives
$$\||J(f)\||\leq 
\frac 12+C\tau_1^{-1/16},\quad
\||J(f)-J(g)\||\leq (\frac12+C\tau_1^{-1/16})\||f-g\||,$$
for any $f,g\in\{\||h\||\leq 1\}$, provided $|\alpha_0|+|\nu|$ is sufficientlt small.
This means that for $\tau_1$ sufficiently large,
$J$ is a contraction of the unit ball $\||f\||\leq 1$ into itself and consequently,
has a unique fixe point $f$ that satisfies
$$\|f(\tau)\|_{H^2}\leq \tau^{-1-1/16},\quad \forall \tau\geq \tau_1,$$
which together with Prop. \ref{p1} gives Theorem \ref{mth}.

\section{Linearized evolution}  \label{linearized}
In this section we prove Prop. \ref{lin}. The proof will be achieved by combining 
the results of \cite{DM} with a careful spectral analysis of the operator $H$
around zero energy. The section organized as follows. In subsection 1
we consider the operator $H$ as before, restricted to the subspace of radial functions,
and construct a basis of Jost solutions for the equation $H\zeta=E\zeta$. In subsection 2
we study the spectral decomposition of $H$ near $E=0$. In subsection 3 we prove Prop. \ref{lin}
by combining the results of the previous two subsections with the coercivity properties
of $H$ established in \cite{DM}.

\subsection{Solutions to the equation $H\zeta=E\zeta$.}
In this subsection we construct a basis of Jost solutions of the equation
$H\zeta=E\zeta$, $E\in \R$.  Since the subject is completely standard we will only briefly scetch the proofs (see also \cite{BP}, \cite{KS} for a closely related construction in the context of energy
subcitical NLS). 
Recall that
\begin{equation*}\begin{split}
&H=-(\partial_\rho^2+2\rho^{-1}\partial_\rho)\sigma_3+V(\rho),\quad V=
\left(\begin{array}{cc}
V_1& V_2\\
-V_2 & -V_1
\end{array} \right),\\
& V_1(\rho)=-3W^4(\rho),\quad V_2(\rho)=-2W^4(\rho),
\quad 
W(\rho)=(1+\rho^2/3)^{-1/2}.
\end{split}
\end{equation*}

We emphase that $V(\rho)$ is a smooth function of $\rho$ that decays as $\rho^{-4}$
as $\rho\rightarrow \infty$.
Since $\sigma_1H=-H\sigma_1$ it suffices to consider the case $E\geq 0$,
so we write $E=k^2$, $k\geq 0$.
It will be convenient for us  to remove the first derivative in $H$.
Set  $f=\rho\zeta$, then one gets
\be\label{lin-2}
\tilde H f=Ef,\quad \tilde H=-\partial_\rho^2\sigma_3+V(\rho).
\ee
We will consider the operator $\tilde H$ on $\R$, to recover the original radial $\R^3$ problem
it suffices to restrict $\tilde H$ to the subspace of odd functions.

We start by  constructing  the most rapidly decaying solution to \eqref{lin-2}.

\begin{lemma}   \label{lemma-f_3}
For all $k\geq 0$ there exists a real solution
$f_3(\rho, k)$  of the equation
\be\label{lin-3}
\tilde H f=k^2f,
\ee
 such that
$f_3(\rho, k)=e^{-k\rho}\chi_3(\rho,k)$, 
where
$\chi_3$ is $C^{\infty}$ function of $( \rho,k)\in \R\times \R_+^*$ verifying
$\chi_3(\rho,k)={0\choose 1}+a(\rho,k)$,
\be\label{f-3}\begin{split}
&|\partial_\rho^l\partial_k^ma(\rho,k)|\leq C_{l}<\rho>^{-2-l+m}(1+k<\rho>)^{-1-m}, m=0,1,\quad\\
&|\partial_\rho^l\partial_k^2a(\rho,k)|\leq C_{l}<\rho>^{-l}(1+k<\rho>)^{-3}\ln \left(\frac{1}{k<\rho>}+2\right),\\
\end{split}
\ee
for all $\rho\geq 0$, $k>0$ and $l\geq 0$.
\end{lemma}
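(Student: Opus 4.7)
The plan is to construct $f_3$ as a Jost-type solution via a Volterra integral equation, in the spirit of the standard analysis for matrix Schr\"odinger systems (cf.\ \cite{BP}, \cite{KS}). Factoring out the anticipated exponential decay by setting $f_3 = e^{-k\rho}\chi_3$ with $\chi_3 = \binom{0}{1} + a$, and using the cancellation $k^2(1+\sigma_3)\binom{0}{1}=0$, equation \eqref{lin-3} reduces to
\begin{equation*}
\partial_\rho^2 a - 2k\partial_\rho a + k^2(1+\sigma_3)a = \sigma_3 V\binom{0}{1} + \sigma_3 V a.
\end{equation*}
The homogeneous equation for $a_+$ has characteristic exponents $k\pm ik$ and for $a_-$ the exponents $0$ and $2k$; the Green's functions producing the particular solution that decays as $\rho\to\infty$ are
\begin{equation*}
G_+(\rho,s;k) = \frac{e^{-k(s-\rho)}\sin(k(s-\rho))}{k},\qquad G_-(\rho,s;k) = \frac{1-e^{-2k(s-\rho)}}{2k},
\end{equation*}
and both satisfy the key uniform estimate $|G_\pm(\rho,s;k)|\le\min(s-\rho,1/k)$.

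I then recast the problem as the vector Volterra equation $a = a^{(0)} + \mathcal K a$ on $[\rho_0,\infty)$, where $a^{(0)}$ is built from the source $\sigma_3 V\binom{0}{1}$ and $\mathcal K$ has kernels $G_\pm(\rho,s)\sigma_3 V(s)$. Using $|V(s)|\lesssim\langle s\rangle^{-4}$,
\begin{equation*}
|a^{(0)}(\rho)| \lesssim \int_\rho^\infty\min(s-\rho,1/k)\langle s\rangle^{-4}\,ds \lesssim \langle\rho\rangle^{-2}(1+k\langle\rho\rangle)^{-1}.
\end{equation*}
In the weighted norm $\|a\|_* = \sup_\rho\langle\rho\rangle^2|a(\rho)|$, the operator $\mathcal K$ has norm $O(\rho_0^{-2})$, so a contraction argument yields the unique solution on $[\rho_0,\infty)$; a refined iteration retaining the factor $(1+k\langle\rho\rangle)^{-1}$ then promotes the bound to match $a^{(0)}$ and gives the $m=0$ estimate of \eqref{f-3}. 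Extension to all $\rho\in\R$ follows from standard ODE continuation with smooth coefficients, and joint smoothness in $(\rho,k)$ on $\R\times\R_+^*$ from smooth dependence of the equation on $k$. The $\partial_\rho^l$ bounds are obtained by differentiating the integral equation (the diagonal terms vanish since $G_\pm(\rho,\rho;k)=0$, and $|\partial_\rho G_\pm|$ is bounded uniformly in $k$) and reiterating in weighted spaces; each $\partial_\rho$ costs a factor $\langle\rho\rangle^{-1}$.

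For the $k$-derivative estimates, $\partial_k^m G_\pm$ satisfies $|\partial_k^m G_\pm(\rho,s;k)|\lesssim (s-\rho)^m\min(s-\rho,1/k)$, which after integration against $\langle s\rangle^{-4}$ gives the $m=1$ bound directly. For $m=2$, in the regime $k\langle\rho\rangle\le 1$ the relevant integral $\int_\rho^{1/k}(s-\rho)^3\langle s\rangle^{-4}\,ds$ has a borderline logarithmic divergence at the upper endpoint $s\sim 1/k$, producing exactly the factor $\ln(1/(k\langle\rho\rangle)+2)$; in the regime $k\langle\rho\rangle\ge 1$ the exponential decay of $G_\pm$ truncates the integral at $s-\rho\sim 1/k$ and the stated $(1+k\langle\rho\rangle)^{-3}$ bound holds with no logarithm. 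The main obstacle is to propagate the $k$-derivative bounds through the Volterra iteration for the coupled $(a_+,a_-)$ system while carefully tracking the logarithmic factor for $m=2$: one must verify that every application of $\mathcal K$ gains an additional algebraic decay $\langle\rho\rangle^{-2}$ over the preceding term, so the iterated contributions do not overwhelm the leading asymptotics of $a^{(0)}$ and the log factor does not compound.
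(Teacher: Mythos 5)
Your construction is the same as the paper's: the paper writes the Volterra equation $\chi_3(\rho,k)=\binom{0}{1}-\int_\rho^{+\infty}K(\rho-s,k)\sigma_3V(s)\chi_3(s,k)\,ds$ with $K(\xi,k)=\mathrm{diag}\bigl(\tfrac{\sin k\xi}{k},\tfrac{\sinh k\xi}{k}\bigr)e^{k\xi}$, which is exactly your pair of Green's functions $G_\pm$ (up to sign conventions and whether one iterates for $\chi_3$ or for $a$), and then invokes standard Volterra iteration using $|\partial_\rho^lV|\lesssim\langle\rho\rangle^{-4-l}$. So the route is identical; the paper merely compresses the iteration into one line.

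One quantitative point in your write-up needs repair. Your stated bound $|\partial_k^mG_\pm(\rho,s;k)|\lesssim(s-\rho)^m\min(s-\rho,1/k)$ is too weak: feeding it into $\int_\rho^\infty\cdot\,\langle s\rangle^{-4}ds$ gives, for $m=1$ and $k\langle\rho\rangle\geq1$, only $\langle\rho\rangle^{-1}(1+k\langle\rho\rangle)^{-1}$ instead of the required $\langle\rho\rangle^{-1}(1+k\langle\rho\rangle)^{-2}$, and similarly one power short for $m=2$. The correct (and true) estimate is the one the paper records, $|\partial_k^mK(\xi,k)|\leq C_m|\xi|^{m+1}\langle k\xi\rangle^{-(m+1)}$, i.e.\ $\min\bigl((s-\rho)^{m+1},k^{-(m+1)}\bigr)$: each $\partial_k$ brings down a factor $\xi$ but the surviving exponential $e^{k\xi}$, $\xi\leq0$, converts $|\xi|^{j}e^{k\xi}/k$ into $O(k^{-(m+1)})$ when $k|\xi|\geq1$. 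With that bound your integrals do produce $(1+k\langle\rho\rangle)^{-(m+1)}$ for $m=0,1$ and the logarithm only at $m=2$, exactly as in \eqref{f-3}. A related (smaller) point: to keep the factor $(1+k\langle\rho\rangle)^{-1}$ after taking $\partial_\rho^l$, do not discard the exponential in $\partial_\rho G_\pm$ (namely $e^{-k(s-\rho)}$, resp.\ $e^{-2k(s-\rho)}$); "bounded uniformly in $k$" alone loses it. Neither issue affects the architecture of the proof, which matches the paper's.
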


\begin{proof}
One writes the following integral equation for $\chi_3$ 
$$\chi_3(\rho, k) = {0\choose1}-\int_{\rho}^{+\infty} K(\rho-s, k)\sigma_3 V(s)
\chi_3(s, k) ds,$$
$$K(\xi,k)= \left(\begin{array}{cc}
\frac{\sin k\xi}{k} & 0\\
0 & \frac{\sinh k \xi}{k}
\end{array} \right) e^{k\xi}.$$
The statement of the lemma follows then from the estimate
$$|\partial_k^l K(\xi,k)|\leq C_l\frac {|\xi|^{l+1}}{<k\xi>^{l+1}},\quad \xi\leq 0,\,\, k\geq 0,\,\,l\geq 0$$
and the decay properties of $V$:
$$|\partial^l_\rho V(\rho)|\leq C_l<\rho>^{-4-l},\quad \rho \in \R,\quad l\geq 0,$$
by standard Volterra iterations.
\end{proof}

We next construct the oscillating solutions to Eq. \eqref{lin-3}.
\begin{lemma}   \label{lemma-f_1}
For all $k\geq 0$ there exists a solution
$f_1(\rho, k)$  of Eq.
\eqref{lin-3}
 such that $f_1$ is a smooth 
function of $( \rho,k)\in \R\times \R_+^*$ of the form
$f_1(\rho, k)=e^{ik\rho}({1\choose 0}+b(\rho,k))$,
where $b$ verifies
\be\label{f-1}\begin{split}
&|b(\rho,k)|\leq C(<\rho>^{-2}+ke^{-k\rho}),\\
&|\partial_\rho b(\rho,k)|\leq C(<\rho>^{-3}+k^2e^{-k\rho}),\\
&|\partial_k b(\rho,k)|\leq C(<\rho>^{-1}+<k\rho>e^{-k\rho}),\\
&|\partial_{\rho k}^2 b(\rho,k)|\leq C(<\rho>^{-2}+k<k\rho>e^{-k\rho}),
\end{split}
\ee
for all $\rho\geq 0$, $0\leq k\lesssim 1$. In addition, one has
$$ |\partial_{k}^2 b(\rho,k)|\leq C\ln \left(\frac1k+1\right),$$
for all $0\leq \rho\lesssim 1$, $0<k\lesssim 1$.
\end{lemma}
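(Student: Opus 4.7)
The plan is to mimic the Volterra construction of Lemma \ref{lemma-f_3} adapted to the oscillating side of the spectrum. Substituting $f_1=e^{ik\rho}({1\choose 0}+b(\rho,k))$ into $\tilde H f_1=k^2 f_1$ and using $(\tilde H_0-k^2)e^{ik\rho}{1\choose 0}=0$ reduces the eigenvalue equation to
\begin{equation*}
(\tilde H_0-k^2)(e^{ik\rho}b)=-V\,e^{ik\rho}({1\choose 0}+b),
\end{equation*}
which I would rewrite as the integral equation
\begin{equation*}
b(\rho,k)=-\int_\rho^{+\infty}K(\rho-s,k)\,\sigma_3V(s)\bigl({1\choose 0}+b(s,k)\bigr)\,ds,
\end{equation*}
with a diagonal $2\times 2$ kernel $K(\xi,k)$. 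The $(1,1)$-entry is the Green's function of $-\partial_\rho^2-2ik\partial_\rho$ with decaying trace at $+\infty$, namely $K_{11}(\xi,k)=-e^{-ik\xi}\frac{\sin k\xi}{k}$ on $\xi\le 0$; the $(2,2)$-entry is built from the $e^{\pm k\rho}$ modes so as to make $e^{ik\rho}b_2$ decay, giving a factor roughly of the form $e^{-ik\xi}\frac{\sinh k\xi}{k}e^{k\xi}$ (up to normalization). The uniform bound $|K(\xi,k)|\lesssim\min(|\xi|,1/k)+e^{k\xi}(|\xi|+1/k)$ on $\xi\le 0$ follows by elementary estimates.

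Combined with the fast decay $|\partial_\rho^\ell V(\rho)|\le C_\ell\langle\rho\rangle^{-4-\ell}$, this makes the integral operator a strict contraction on a suitable weighted $L^\infty$ space for $0\le k\lesssim 1$, so standard Volterra iteration produces a unique smooth solution $b$ and the base estimate $|b(\rho,k)|\le C(\langle\rho\rangle^{-2}+ke^{-k\rho})$: the polynomial piece comes from pairing $K_{11}\sim\min(|\xi|,1/k)$ with $\langle s\rangle^{-4}$, and the exponential piece from the $(2,2)$-block, where the factor $k$ is supplied by the $\sinh k\xi/k$ normalization after integrating against $V_2\sim s^{-4}$. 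Differentiating the fixed point equation once in $\rho$ strips one integration and improves the polynomial decay by a factor $\langle\rho\rangle^{-1}$, yielding $|\partial_\rho b|\lesssim\langle\rho\rangle^{-3}+k^2e^{-k\rho}$; differentiating once in $k$ pulls a factor of $|\xi|$ out of $K$, which after pairing with $\langle s\rangle^{-4}$ costs one power of $\rho^{-1}$, giving $|\partial_k b|\lesssim\langle\rho\rangle^{-1}+\langle k\rho\rangle e^{-k\rho}$; the mixed estimate on $\partial^2_{\rho k}b$ follows by combining both.

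The main obstacle is the second $k$-derivative bound for $\rho\lesssim 1$. Differentiating $K_{11}(\xi,k)=-e^{-ik\xi}\frac{\sin k\xi}{k}$ twice in $k$ produces a kernel behaving like $\xi^2$ for $|k\xi|\lesssim 1$ and only like $|\xi|/k$ (times oscillatory factors) for $|k\xi|\gtrsim 1$, so the naive bound $\int_\rho^{+\infty} s^2\langle s\rangle^{-4}\,ds$ diverges logarithmically as $\rho\to 0$. The remedy, exactly as in Lemma \ref{lemma-f_3}, is to split the integration at $s\sim 1/k$: on $s\lesssim 1/k$ one uses the Taylor expansion $|\partial_k^2K_{11}|\lesssim|\xi|^3$ and pairs against $\langle s\rangle^{-4}$, producing $\int_\rho^{1/k} s^{-1}\,ds\sim\ln(1/(k\rho)+1)$, while on $s\gtrsim 1/k$ the oscillatory kernel is bounded by $|\xi|/k^2$ and the integral is dominated by $k^{-2}\int_{1/k}^{\infty}s^{-3}\,ds\sim 1$; together these yield the stated $C\ln(1/k+1)$ bound. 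The remaining smoothness and all asymptotic expansions then follow from differentiating the fixed point equation and iterating.
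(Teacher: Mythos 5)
There is a genuine gap, and it lies at the very first step: the direct Volterra equation you write down for $b$ does not make sense for the hyperbolic (second) component. In Lemma \ref{lemma-f_3} the kernel $\frac{\sinh k\xi}{k}e^{k\xi}=\frac{e^{2k\xi}-1}{2k}$ is uniformly bounded by $\min(|\xi|,1/(2k))$ precisely because the ansatz $f_3=e^{-k\rho}\chi_3$ turns the effective source into an exponentially decaying one, so that conjugating by $e^{-k\rho}$ converts the growing mode $e^{k(s-\rho)}$ of $\partial_\rho^2-k^2$ into the harmless factor $e^{2k(\rho-s)}$. For $f_1=e^{ik\rho}({1\choose 0}+b)$ the prefactor is purely oscillatory: the source for the second component is $V_2f_{1,1}+V_1f_{1,2}\sim \langle s\rangle^{-4}e^{iks}$, which decays only polynomially, and any Green's kernel for $\partial_\rho^2-k^2$ that suppresses the $e^{k\rho}$ growth must integrate the mode $e^{k(s-\rho)}$ against the source over $s\in[\rho,\infty)$ --- that integral diverges --- or, if one instead uses the kernel $\frac{e^{-k|s-\rho|}}{2k}$ (splitting the integration as in Lemma \ref{lemma-f_4}), one picks up an uncancelled factor $1/k$, which is compatible only with $k$-dependent constants and is incompatible with the claimed uniform bound $|b_2|\leq Cke^{-k\rho}$. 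Your proposed $(2,2)$-entry ``$e^{-ik\xi}\frac{\sinh k\xi}{k}e^{k\xi}$'' is not a Green's function for the operator obtained by conjugating with $e^{ik\rho}$, so the fixed-point equation as written does not reproduce solutions of \eqref{lin-3}, and the contraction and all subsequent derivative estimates do not get off the ground.

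The paper's proof avoids this by a reduction of order: it seeks $f_1=z_0f_3+z_1{1\choose 0}$ with $f_3$ the exponentially decaying solution already built in Lemma \ref{lemma-f_3}, and sets $z_2=z_0'f_{3,2}$. The second component then satisfies the \emph{first-order} equation $-z_2'+kz_2+V_{21}z_1+V_{22}z_2=0$, whose integrating factor produces the kernel $e^{-k(s-\rho)}$ --- decaying in $s$, with no $1/k$ --- so the Volterra iteration closes with bounds uniform in $k$, and the sharp smallness $|b_2|\lesssim ke^{-k\rho}$ comes out of the reconstruction $z_0(\rho,k)=\int_R^{\rho}z_2/f_{3,2}\,ds-\int_R^{\infty}z_2(\cdot,0)/f_{3,2}(\cdot,0)\,ds$ together with the estimates on $\partial_k^mV_{ij}$ inherited from Lemma \ref{lemma-f_3}. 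Your discussion of the logarithmic loss in $\partial_k^2b$ (splitting at $s\sim 1/k$) is the right heuristic and matches the spirit of the paper's estimates, but it cannot be salvaged without first replacing the direct integral equation by the order-reduction device (or an equivalent mechanism exhibiting the cancellation that makes $b_2=O(k)$).
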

\begin{proof}
To construct $f_1$ we will reduce the order of the system
\eqref{lin-3} by means of the substitution
$f_1=z_0f_3+z_1{1\choose 0}$. Further setting 
$z_2=z_0^\prime f_{3,2}$, $f_3={f_{3,1}\choose f_{3,2}}$, we  get that
$z={z_1\choose z_2}$ solves
\be\label{sys-z}\begin{split}
&-z_{1}^{\prime\prime}-k^2z_1+V_{11}z_1+V_{12}z_2=0,\\
&-z_2^\prime+kz_2+V_{21}z_1+V_{22}z_2=0.
\end{split}
\ee
Here 
\begin{equation*}\begin{split}
&V_{11}=V_1-V_2\frac{f_{3,1}}{f_{3,2}},\quad V_{12}=\frac{2}{f_{3,2}^2}
(f_{3,1}f_{3,2}^\prime-f_{3,1}^\prime f_{3,2}),\\
&V_{21} = V_2, \quad V_{22} = -\frac{1}{f_{3,2}} (
f_{3,2}^\prime +k f_{3,2}).
\end{split}
\end{equation*}
By Lemma \ref{lemma-f_3}, there exists $R> 0$ independent of $k$, such that the functions
$V_{ij}(\rho,k)$, $i,j=1,2$, are smooth in both variables for $k>0$ and $\rho\geq R$
and verify for all $l\geq 0$, $\rho\geq R$, $k>0$,
\be\label{V}\begin{split}
&|\partial_\rho^l V_{j1}(\rho,k)|\leq C_l<\rho>^{-4-l},\quad j=1,2,\\
&|\partial_\rho^l \partial_kV_{11}(\rho,k)|\leq C_l<\rho>^{-5-l}<k\rho>^{-2},\\
&|\partial_\rho^l \partial_k^2V_{11}(\rho,k)|\leq 
C_l<\rho>^{-4-l}<k\rho>^{-3}\ln\left(\frac{1}{k\rho}+2\right),\\
&|\partial_\rho^l \partial_k^mV_{j2}(\rho,k)|\leq 
C_l<\rho>^{-3-l+m}<k\rho>^{-1-m},\quad j=1,2, \quad m=0,1,\\
&|\partial_\rho^l \partial_k^2V_{22}(\rho,k)|\leq 
C_l<\rho>^{-1-l}<k\rho>^{-3}\ln\left(\frac{1}{k\rho}+2\right),
\end{split}
\ee
Writing for $z$ the  following integral equation
$$z(\rho, k) = 
e^{i k \rho}{1\choose 0}
 -\int_{\rho}^{\infty} \left( \begin{array}{cc}
\frac{\sin k (\rho -s)}{k} & 0\\
0 & e^{-k (s -\rho)}
\end{array} \right) \left( \begin{array}{cc}
V_{11} & V_{12}\\
V_{21} & V_{22}
\end{array} \right)z(s, k) ds,$$
and taking into account  \eqref{V},
one proves easily   the existence of a smooth solution  satisfying
\be\label{z}\begin{split}
&|\partial_\rho^l\partial_k^m(e^{-ik\rho}z_1-1)|+<\rho >
|\partial_\rho^l\partial_k^m(e^{-ik\rho}z_2)|
\leq C_l<\rho>^{-2-l+m}<k\rho>^{-1-m},
\,\, m=0,1,\\
&|\partial_\rho^n\partial_k^2(e^{-ik\rho}z_1-1)|+
|\partial_\rho^n\partial_k^2(e^{-ik\rho}z_2)|
\leq C\ln \left(\frac{1}{k\rho}+2\right),
\quad  n=0,1,
\end{split}
\ee
for all $\rho\geq R$, $k>0$, $l\geq 0$.

To reconstruct $f_1$, we set 
$$z_0(\rho, k) = \int_R^{\rho} \frac{z_2(s, k)}{f_{3, 2}(s, k)} ds -\int_{R}^{+\infty} \frac{z_2(s, 0)}{f_{3, 2}(s, 0)} ds.$$
Then, for $\rho \geq R$, the statement of Lemma \ref{lemma-f_1} follows directly  from \eqref{z} and Lemma \ref{lemma-f_3}. To cover the case $x\leq R$ one can invoke the Cauchy problem with initial data at $\rho=R$.
\end{proof}

Note that since $k^2 \in \R$, $f_2(\cdot, k) =
\overline{f_1(\cdot, k)}$ is also a solution of \eqref{lin-3}.

\begin{remark}\label{remark}
Recall that the equation $\tilde H f=0$ has a basis of explicit solutions  
$\rho\Phi_\pm(\rho){1\choose \pm1},\quad \rho\Theta_\pm(\rho){1\choose \pm1}$,
with $\Phi_\pm$, $\Theta_\pm$ given by \eqref{k=0.0}.
Comparing the behavior of $\rho\Phi_\pm$,  $\rho\Theta_\pm$,
with the asymptotics of $f_1(\rho,0)$, $f_3(\rho,0)$,
one gets 
\begin{equation}    \label{k=0.1}
f_1(\rho,0)=\frac12\rho(\xi_0(\rho)+\xi_1(\rho)),\quad f_3(\rho,0)=\frac12\rho(\xi_1(\rho)-\xi_0(\rho)),
\ee
where
$\xi_0=\frac{1}{\sqrt 3}W{1\choose -1}$, 
$\xi_1=-\frac{2}{\sqrt 3}W_1{1\choose 1}$.

\end{remark}
Next, we construct  an  exponentially growing solution at $+\infty$.
\begin{lemma}   \label{lemma-f_4}
For any  $k > 0$, there exists a solution $f_4(\rho, k)$
to \eqref{lin-3} such that $f_4= e^{k \rho}\chi_4$
with $\chi_4$ verifying 
$$\partial_\rho^l (\chi_4(\rho,k)-{0\choose1})=O_k(\rho^{-3-l}),\quad \rho\rightarrow +\infty.$$
\end{lemma}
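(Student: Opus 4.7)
The plan is to mimic the construction of $f_3$ in Lemma \ref{lemma-f_3}, adapted to an exponentially growing exponential factor. I would substitute $f_4 = e^{k\rho}\chi_4$ into \eqref{lin-3} and obtain the system
\begin{equation*}
\chi_4'' + 2k\chi_4' = -k^2(I+\sigma_3)\chi_4 + \sigma_3 V(\rho)\chi_4,
\end{equation*}
sought with the boundary condition $\chi_4 \to {0 \choose 1}$ at $+\infty$. Writing $\chi_4 = {0 \choose 1} + a$ with $a = (a_1, a_2)^T$, the components decouple at leading order: $a_1$ satisfies $a_1'' + 2k a_1' + 2k^2 a_1 = V_1 a_1 + V_2(1+a_2)$, whose homogeneous modes are $e^{(-1\pm i)k\rho}$ (both decaying), while $a_2$ satisfies $a_2'' + 2k a_2' = V_2 a_1 + V_1(1+a_2)$, with homogeneous modes $1$ and $e^{-2k\rho}$.

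The key step is to convert this into a Volterra integral equation with integration from $+\infty$. For the second component, writing $(e^{2k\rho} a_2')' = e^{2k\rho}(V_2 a_1 + V_1(1+a_2))$ and imposing $a_2, a_2' \to 0$, a two-step integration (first recover $a_2'$ by integrating the antiderivative from an appropriate base point, then integrate $a_2'$ from $+\infty$) yields a kernel whose action on a source of size $\langle\rho\rangle^{-4}$ produces an output of size $\langle\rho\rangle^{-3}$, by integration by parts using $V = O(\rho^{-4})$. For the first component, because both characteristic roots have negative real part, the standard Green's function with integration from $+\infty$ produces $a_1 = O(\rho^{-4})$ directly from the source $V_2(1 + a_2) = O(\rho^{-4})$. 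Combining these two components gives a Volterra-type operator, and the polynomial bounds $|\partial_\rho^l V(\rho)| \leq C_l\langle\rho\rangle^{-4-l}$ ensure that iteration converges on $[\rho_0, +\infty)$ for $\rho_0 = \rho_0(k)$ sufficiently large.

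Once existence is established on $[\rho_0, +\infty)$, I would extend $f_4$ to all $\rho$ by the local Cauchy theory for \eqref{lin-3}. The asymptotic expansion $\partial_\rho^l(\chi_4 - {0 \choose 1}) = O_k(\rho^{-3-l})$ for any $l \geq 0$ follows by differentiating the integral equation and applying the decay bounds on $V$ and on $a$, in the spirit of Lemma \ref{lemma-f_3}. Uniqueness of $f_4$ is not required by the statement, since any solution not in $\mathrm{span}(f_1, f_2, f_3)$ is sufficient, but the construction produces a well-defined $f_4$ with the claimed asymptotics.

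The main obstacle is the exponentially growing mode: one must choose the integral kernels so that no spurious growing factor appears during iteration, which in the $f_3$ proof was automatic from the decaying exponential. The saving fact is that the source on the right-hand side of the equation for $\chi_4$ decays like $\rho^{-4}$ with no growing prefactor, so as long as the second-component equation is integrated in the order $a_2' \mapsto a_2$ described above and the first-component Green's function uses the decaying homogeneous modes, the Volterra iteration closes without generating any $e^{k\rho}$-type growth.
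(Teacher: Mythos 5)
Your overall architecture is the same as the paper's: factor out $e^{k\rho}$, observe that the reduced system has relative modes $e^{(-1\pm i)k\rho}$ in the first component and $1$, $e^{-2k\rho}$ in the second, and set up a Volterra-type integral equation near $\rho=+\infty$ whose convergence is driven by the $\langle\rho\rangle^{-4}$ decay of $V$; your two-step treatment of the second component (recover $a_2'$ from a finite base point, then integrate $a_2$ from $+\infty$) is exactly the kernel $\diag(0,\tfrac{1}{2k})\int_\rho^{+\infty}+\diag(0,\tfrac{e^{2k(s-\rho)}}{2k})\int_{R_1}^{\rho}$ appearing in the paper's equation \eqref{f-4}, and it correctly produces the $O_k(\rho^{-3})$ rate.

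However, your prescription for the first component is wrong as written and the step would fail. You claim that ``because both characteristic roots have negative real part, the standard Green's function with integration from $+\infty$ produces $a_1=O(\rho^{-4})$.'' The variation-of-parameters kernel built from $e^{(-1\pm i)k\rho}$ is $\frac{e^{-k(\rho-s)}\sin k(\rho-s)}{k}$, which for $s\geq\rho$ equals $\frac{e^{k(s-\rho)}\sin k(\rho-s)}{k}$ and grows exponentially in $s$; against a source $V_2(1+a_2)=O(s^{-4})$ the integral $\int_\rho^{+\infty}$ diverges. The logic is the reverse of what you state: modes that decay (relative to the $e^{k\rho}$ factor you have extracted) must be integrated \emph{forward} from a finite base point $R_1$, where the kernel $\frac{e^{k(s-\rho)}\sin k(\rho-s)}{k}$, $R_1\leq s\leq\rho$, is bounded by $\sup_{x\geq 0}xe^{-kx}$ and yields $a_1=O_k(\rho^{-4})$ with an exponentially small contribution from $s$ near $R_1$; integration from $+\infty$ is reserved for the non-decaying (constant) mode of the second component, which is what enforces $\chi_4\to{0\choose1}$. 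This is precisely the splitting in the paper's \eqref{f-4}. With that correction your argument closes (contraction on bounded continuous functions on $[R_1,+\infty)$ for $R_1=R_1(k)$ large, one further iteration to upgrade boundedness to $O_k(\rho^{-3})$, and differentiation of the integral equation for the derivative bounds), but as stated the first-component integral does not converge. A further minor point: no boundary condition at $+\infty$ can select $a_1$ uniquely, since both of its homogeneous modes decay; the paper implicitly fixes it by zero data at $R_1$, which is harmless because the lemma only asserts existence.
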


\begin{proof}
We construct $f_4$ by means of the following integral equation:
\begin{equation}    \label{f-4}
\begin{split}
\chi_4(\rho, k) =& {0\choose1}
+\int_{\rho}^{+\infty} \left( \begin{array}{cc}
0 & 0\\
0 & \frac{1}{2 k}
\end{array} \right) V \chi_4(s, k) ds\\
&+\int_{R_1}^{\rho} \left( \begin{array}{cc}
\frac{e^{k(s-\rho)}\sin k (\rho -s)}{k} & 0\\
0 & \frac{e^{2k (s - \rho)}}{2k}
\end{array} \right) V\chi_4(s, k) ds.
\end{split}
\ee
For $k>0$ and $R_1$ sufficiently large (depending on $k$), the operator generating
\eqref{f-4} is small on the space of bounded continuous functions. Therefore, 
\eqref{f-4} has a solution $\chi_4$ verifying $|\chi_4(\rho, k)|\leq C$,
$\rho\geq R_1$. Iterating this bound one gets that
$\chi_4(\rho,k)-{0\choose 1}=O_k(\rho^{-3})$ as $\rho \rightarrow \infty$.
Finally, the estimates for the derivatives can be obtained  by differentiating \eqref{f-4}.
\end{proof}

We now briefly describe some properties  of the solutions $f_j$, $j=1,\dots, 4,$ that we will need later.
Recall that the Wronskian 
$w(f,g) = \left<f^\prime, g\right>_{\R^2} -\left<f, g^\prime\right>_{\R^2}$ does not depend on $\rho$ if 
$f$ and $g$ are solutions of \eqref{lin-2}. 

The estimates of Lemmas \ref{lemma-f_3}, 
\ref{lemma-f_1}, \ref{lemma-f_4}
lead to the relations:
\be\label{w-1}
w(f_1,f_2)=2ik,\,\,\, w(f_1,f_3)= w(f_2,f_3)=0,\,\,\, w(f_3,f_4)=-2k,\quad k>0,
\ee
the three first relations being valid  for $k=0$ as well.
Notice also that by Lemmas \ref{lemma-f_3},  \ref{lemma-f_1},
$\partial_kf_1(\rho,0)$, $\partial_kf_3(\rho,0)$ are solutions of 
the equation $\tilde Hf=0$  verifying for $\rho \geq 0$,
\begin{equation*}\begin{split}
&\big| \partial_{k} f_1(\rho, 0) - {i\rho\choose 0}\big|
 \leq C, 
\quad \big| \partial_{k \rho}^2 f_1(\rho, 0) - {i\choose 0}\big|
\leq \frac{C}{<\rho>^2},\\
&\big| \partial_{k}f_3(\rho, 0) +{0\choose \rho}\big| \leq \frac{C}{<\rho>}, \quad \big|
 \partial_{k \rho}^2 \zeta_3(\rho, 0) +{0\choose 1}\big| \leq \frac{C}{<\rho>^2},
\end{split}
\end{equation*}
\noindent As a consequence, one has
\be\label{w-2}\begin{split}
&w(\partial_k f_1|_{k=0},f_1|_{k=0})=i,\quad w(\partial_k f_1|_{k=0},f_3|_{k=0})=0,\\
&w(\partial_k f_3|_{k=0},f_1|_{k=0})=0,\quad w(\partial_k f_3|_{k=0},f_3|_{k=0})=-1.
\end{split}
\ee

In addition to scalar Wronskian we will use matrix Wronskians.
If $F$, $G$ are $2\times 2$ matrix solutions of \eqref{lin-3},
their matrix Wronskian
$$W(F,G)={F^t}^\prime G-{F^t} G^\prime$$
is independent of $\rho$.

Set $g_j(\rho, k)=f_j(-\rho, k)$, $j=1,\dots, 4$. Since the potential $V$ is even, 
$g_j$, $j=1,\dots, 4,$ are again solutions of \eqref{lin-3} which have the same asymptotic behavior as 
$\rho\rightarrow -\infty$ as $f_j$ as $\rho \rightarrow +\infty$.

Consider the matrix solutions $F$, $G$, defined by
$$F=(f_1,f_3),\quad G=(g_1, g_3).$$
Denote $D(k)=W(F,G)$. It follows from Lemmas \ref{lemma-f_3},  \ref{lemma-f_1}
that $D$ is smooth  for $k>0$ and admits the estimate
\be\label{D-1}
|\partial_k^2D(k)|\leq C \ln \left(\frac1k+1\right),\quad 0<k\lesssim 1.
\ee
In addition, by \eqref{k=0.1}, \eqref{w-1}, \eqref{w-2}, one has
\be\label{D-2}
D(0)=0,\quad \partial_kD(0)=\left( \begin{array}{cc}
-2i & 0\\
0 & 2
\end{array} \right).
\ee
\subsection{Scattering solutions and the distorted Fourier transform in a vicinity
of zero energy}
Set 
\be\label{F}
\mathcal F(\rho,k)=F(\rho,k)s(k),\ee
where $s(k)={D^t}^{-1}(k){2ik\choose 0}$.
By \eqref{D-1}, \eqref{D-2}, $s={s_1\choose s_2}$ is a smooth function of $k$ for $0<k<k_0$
($k_0$ sufficiently small), continuous up to $k=0$,  verifying
\be\label{s}\begin{split}
&s_1(0)=-1,\quad s_2(0)=0,\\
&|\partial_ks(k)|\leq C|\ln k|,\quad 0<k\leq k_0.
\end{split}
\ee
By construction, one has
$$w(\mathcal F, g_1)=2ik, \quad w(\mathcal F, g_3)=0,$$
for any $0\leq k<k_0$.
As a consequence,
\be\label{F-1}
\mathcal F(\rho,k)=r_1(k)g_1(\rho,k)+g_2(\rho,k)+r_2(k)g_3(\rho,k), \quad  0\leq k< k_0,
\ee
with some coefficients $r_1(k)$, $r_2(k)$ that, by \eqref{k=0.1}, \eqref{s}, verify
\be\label{r0}
r_1(0)=r_2(0)=0.
\ee
 Computing the Wronskians $w(\mathcal F,\bar{\mathcal F})$
and $ w(\mathcal F,\bar{\mathcal G})$, where $\mathcal G(\rho,k)=\mathcal F(-\rho,k)$,
one gets
$$|s_1(k)|^2+|r_1(k)|^2=1,\quad r_1(k)\overline{s_1(k)}+ \overline{r_1(k)}s_1(k)=0,
\quad  0\leq k< k_0.$$
One can write the following  Wronskian representation for $r_1$:
\be\label{r-1}
r_1(k)=s_1(k)\frac{w(g_2,f_1)}{2ik}+s_2(k)\frac{w(g_2,f_3)}{2ik},\quad k\neq 0.
\ee
Using \eqref{s} and the relations 
$$w(g_2,f_3)|_{k=0}=w(g_2,f_1)|_{k=0}=\partial_kw(g_2,f_1)|_{k=0},$$
one easily deduces from \eqref{r-1}  that $r_1$ is smooth for $0<k<k_0$, continuous up to $k=0$,
and verifies
\be\label{r1}
|\partial_kr_1(k)|\leq C|\ln k|,\quad 0<k< k_0,\ee
which in its turn, implies that $r_2$ is smooth for $0<k< k_0$, continuous up to $k=0$ and
admits a similar estimate:
\be\label{r2}
|\partial_kr_2(k)|\leq C|\ln k|,\quad 0<k< k_0.
\ee

Introduce the following odd solution of \eqref{lin-3}:
$$e(\rho,k)=\mathcal F(-\rho,k)-\mathcal F(\rho,k).$$
By \eqref{F}, \eqref{F-1},
\be\label{e}
e=a_1f_1+f_2+a_2f_3,\quad a_j=r_j-s_j, \,\,\, j=1,2.
\ee
It follows from \eqref{s}, \eqref{r0}, \eqref{r1},  \eqref{r2} that
\be\label{e-0}
a_1(0)=1,\,\,\, a_2(0)=0,
\ee
and 
\be\label{e-00}
|\partial_ka_j|\leq C|\ln k|,\quad 0<k< k_0,\,\,j=1,2,
\ee
which together with Lemmas  \ref{lemma-f_3},  \ref{lemma-f_1} 
implies the following result.
\begin{lemma}\label{lem-e} One has:\\
(i) $e(\rho,k)=e_0(\rho,k)+e_1(\rho,k)$, where
$e_0(\rho,k)=a_1(k)e^{ik\rho}{1\choose 0}+e^{-ik\rho}{1\choose 0}$ and the remainder
$e_1(\rho,k)$ admits the estimates
\be\label{e-1}\begin{split}
&|e_1(\rho,k)|\leq C(<\rho>^{-2}+k|\ln k|e^{-k\rho}),\quad \rho\geq 0,\\
&|\partial_ke_1(\rho,k)|\leq C|\ln k|(<\rho>^{-1}+e^{-k\rho/2}),\quad \rho\geq 0,\\
&\|e_1(\cdot, k)\|_{L^2(\R_+)}\leq C,\\
&\|\rho e_1(\cdot, k)\|_{L^2(\R_+)}+\|\partial_ke_1(\cdot, k)\|_{L^2(\R_+)}\leq Ck^{-1/2}|\ln k|,
\end{split}
\ee
for any $0<k\leq k_0$.\\
(ii) $(\rho\partial_\rho -k\partial_k)e(\rho,k)=e^{ik\rho}{1\choose 0}k\partial_ka_1(k)+e_2(\rho,k)$,
with $e_2(\rho,k)$ verifying
\be\label{e-2}\begin{split}
&|e_2(\rho,k)|\leq C(<\rho>^{-1}+k|\ln k|e^{-k\rho/2}),\quad \rho\geq 0,\\
&\|e_2(\cdot, k)\|_{L^2(\R_+)}\leq C,\\
\end{split}
\ee
for any $0<k\leq k_0$.
\end{lemma}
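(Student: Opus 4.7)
The plan is to deduce both parts of the lemma by directly substituting the representation $e = a_1 f_1 + f_2 + a_2 f_3$ from \eqref{e}, combined with the pointwise bounds on $f_1$, $f_2 = \overline{f_1}$, and $f_3$ supplied by Lemmas \ref{lemma-f_1} and \ref{lemma-f_3}, together with the information \eqref{e-0}, \eqref{e-00} about the coefficients $a_1$, $a_2$. The overall point is that $e_0$ is the part of $a_1 f_1 + f_2$ obtained by discarding the smooth corrections in $f_1$, $f_2$, so everything else lives in $e_1$.

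For part (i), I would separate the oscillatory and exponential prefactors via $f_1 = e^{ik\rho}({1\choose 0} + b)$, $f_2 = e^{-ik\rho}({1\choose 0} + \overline b)$, $f_3 = e^{-k\rho}\chi_3$, obtaining
$$e_1(\rho,k) = a_1(k) e^{ik\rho} b(\rho,k) + e^{-ik\rho}\overline{b(\rho,k)} + a_2(k) e^{-k\rho}\chi_3(\rho,k).$$
The pointwise estimate on $e_1$ then follows from $|b(\rho,k)|\leq C(<\rho>^{-2}+ke^{-k\rho})$ (Lemma \ref{lemma-f_1}), $|\chi_3(\rho,k)|\leq C$ (Lemma \ref{lemma-f_3}), and from $|a_2(k)|\leq Ck|\ln k|$, which comes from integrating $|\partial_k a_2|\leq C|\ln k|$ in \eqref{e-00} against $a_2(0)=0$. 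For the $L^2$ bounds I would combine $\|<\rho>^{-2}\|_{L^2(\R_+)}\leq C$ with $\|\rho^j e^{-k\rho}\|_{L^2(\R_+)}\sim k^{-1/2-j}$, $j=0,1$; the logarithms in $\|\rho e_1\|_{L^2}$ and $\|\partial_k e_1\|_{L^2}$ then enter solely through $a_2$ and through $\partial_k a_1$, $\partial_k a_2$, $\partial_k b$, after invoking $|\partial_k b|\leq C(<\rho>^{-1}+<k\rho>e^{-k\rho})$ from Lemma \ref{lemma-f_1}.

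For part (ii), the key observation is the prefactor invariance
$$(\rho\partial_\rho - k\partial_k)\,e^{\pm ik\rho} = 0, \qquad (\rho\partial_\rho - k\partial_k)\,e^{-k\rho} = 0,$$
so the operator $\rho\partial_\rho - k\partial_k$ kills every oscillatory/exponential prefactor and only sees the coefficients $a_j(k)$ and the corrections $b$, $\chi_3$ sitting inside $f_1$, $f_3$. Applied to $a_j(k) f_j$ this gives one term where $-k\partial_k$ hits $a_j$ and a second where $\rho\partial_\rho - k\partial_k$ acts on the smooth correction in $f_j$. Since $a_2(0) = 0$ yields $k\partial_k a_2(k) = O(k|\ln k|)$, the only contribution that is not already of the same shape as $e_1$ is the leading piece $e^{ik\rho}\binom{1}{0}$ of $k(\partial_k a_1) f_1$ (the overall sign being absorbed into $\partial_k a_1$), which is precisely the explicit term appearing in the statement. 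All remaining pieces are lumped into $e_2$ and estimated by exactly the same combination of Lemmas \ref{lemma-f_1}, \ref{lemma-f_3} and of \eqref{e-00} as in part (i); one power of $|\ln k|$ is saved because no $k$-derivative falls on $e_1$ itself, which explains why \eqref{e-2} is logarithm-free.

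The main technical obstacle is not conceptual but bookkeeping: tracking which combinations of derivatives and $\rho$-weights generate $|\ln k|$ (from $a_j$, $\partial_k a_j$, $\partial_k b$) and which generate inverse powers of $k$ (from the $e^{-k\rho}$ factor in $f_3$ and in the $ke^{-k\rho}$ part of $b$), and verifying that their product matches exactly $k^{-1/2}|\ln k|$ in the last two estimates of (i) and leaves a clean $O(1)$ bound in (ii).
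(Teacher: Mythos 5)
Your proposal is correct and follows exactly the route the paper intends: the paper gives no separate proof of this lemma, asserting only that it follows from the decomposition $e=a_1f_1+f_2+a_2f_3$ in \eqref{e}, the coefficient information \eqref{e-0}--\eqref{e-00}, and the bounds on $b$ and $\chi_3$ from Lemmas \ref{lemma-f_1} and \ref{lemma-f_3}, which is precisely the bookkeeping you carry out. The only blemish is the remark about ``absorbing the sign into $\partial_k a_1$'': the computation genuinely produces $-k(\partial_k a_1)e^{ik\rho}\binom{1}{0}$ versus the $+$ in the statement (an inconsequential discrepancy for the estimates, but one you should flag as such rather than absorb), and the pointwise bound \eqref{e-2} is not literally logarithm-free --- rather, every logarithm there comes paired with a factor of $k$, which is what makes the $L^2$ norm $O(1)$.
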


For $0<\kappa\leq k_0$, introduce the operators $\mathbb E_\kappa: L^2(\R_+,\C^2)\rightarrow
 L^2(\R^3,\C^2)$,
$$(\mathbb E_\kappa\Phi)(y)=\frac{1}{2^{3/2}\pi}\int_{\R_+}dk\theta_\kappa(k)\mathcal E(y,k)\Phi(k),\quad \Phi
\in L^2(\R_+,\C^2), $$
where $\mathcal E(y,k)$ is a $2\times 2$ matrix given by
$$\mathcal E(y,k)=\rho^{-1}(e(\rho,k),\sigma_1\overline{e(\rho,k)}),\quad \rho=|y|,$$
$\theta_\kappa(k)=\theta(\kappa^{-1}k)$, $\theta$ is a $C^\infty$ even function verifying
 $\theta(k) = \left\{ \begin{array}{ll}
1 & \textrm{if} \,\, |k| \leq 1/4\\
0 & \textrm{if} \,\, |k | \geq 1/2
\end{array} \right.$.

Since $e(\rho,k)$ is a solution of  the equation $\tilde He=k^2e$, one has
$H\mathbb E_\kappa=\mathbb E_\kappa k^2\sigma_3$.

By Lemma \ref{lem-e} (i), the operators $\mathbb E_\kappa$ are bounded uniformly with 
respect to $\kappa\leq k_0$. 
The action of the adjoint operators $\mathbb E^*_\kappa:
 L^2(\R^3,\C^2)\rightarrow L^2(\R_+,\C^2)$
 is given by
$$(\mathbb E^*_\kappa\psi)(k)=
\frac{1}{2^{3/2}\pi}\theta_\kappa(k)\int_{\R^3} dy \mathcal E^*(y,k)\psi(y),\quad \psi \in  L^2(\R^3,\C^2).$$
Clearly, 
\be\label{comp-0}
\mathbb E^*_\kappa\sigma_3\zeta_\pm=0
\ee for any $0<\kappa\leq k_0$.

The following relation is 
 a standard consequence of the asymptotics given by  Lemma \ref{lem-e} (i),
\be\label{comp}
\mathbb E^*_{\kappa_2}\sigma_3\mathbb E_{\kappa_1}\sigma_3=\theta_{\kappa_1}(k)\theta_{\kappa_2}(k),
\ee
for any $0<\kappa_1,\kappa_2\leq k_0$.
\begin{remark}
Notice that because of the presence of the cut off function $\theta_\kappa$,
$\mathbb E_\kappa$ is bounded as an operator from $L^2([0, k_0])$ to $H^m(\R^3)$
for any $m\geq 0$, uniformly in $\kappa\leq k_0$.
\end{remark}

We next introduce quasi-resonant functions $h_\kappa(y)$, $0<\kappa\leq k_0$, by setting
$$h_\kappa=\sqrt 2\mathbb E_\kappa{1\choose 0}.$$
\begin{lemma}
For any $0<\kappa \leq k_0$, $h_\kappa\in <y>^{-1}L^2(\R^3)$ and as $\kappa\rightarrow 0$, one has
\begin{equation}\label{h-00}
\|h_\kappa\|_{L^2(\R^3)}=O(\kappa^{1/2}),\quad \|yh_\kappa\|_{L^2(\R^3)}=O(\kappa^{-1/2}),\ee
\be\label{h-000}
\left<h_\kappa,\sigma_3(\xi_0+\xi_1)\right>=4\pi+O(\kappa^{1/2}\ln \kappa),\quad
\left<h_\kappa,\sigma_3(\xi_1-\xi_0)\right>=O(\kappa^{1/2}\ln \kappa).
\ee
\end{lemma}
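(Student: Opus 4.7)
Starting from the definition $h_\kappa=\sqrt{2}\,\mathbb E_\kappa\!{1\choose 0}$, one has the explicit radial representation
\[
h_\kappa(y)=\frac{1}{2\pi\rho}\int_0^\infty \theta_\kappa(k)\,e(\rho,k)\,dk,\qquad \rho=|y|.
\]
The plan is to substitute the splitting $e=e_0+e_1$ of Lemma \ref{lem-e}(i), extract a clean Fourier-analytic main term from $e_0$, and absorb the remaining pieces as controlled remainders. Writing $a_1(k)=1+(a_1(k)-1)$ and exploiting the evenness of $\theta$, one obtains
\[
\int_0^\infty \theta_\kappa(k)\bigl(e^{ik\rho}+e^{-ik\rho}\bigr)\,dk=\kappa\,\widetilde\theta(\kappa\rho),
\]
where $\widetilde\theta(u)=\int_\R\theta(k)e^{iku}\,dk$ is Schwartz, real and even, with $\int_\R\widetilde\theta=2\pi\theta(0)=2\pi$.

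For the $L^2$ bounds I pass to one-dimensional radial integrals: $\|h_\kappa\|_{L^2(\R^3)}^2=\pi^{-1}\|g\|_{L^2(\R_+,d\rho)}^2$ and $\|yh_\kappa\|_{L^2(\R^3)}^2=\pi^{-1}\|\rho g\|_{L^2(\R_+,d\rho)}^2$, with $g(\rho)=\int_0^\infty\theta_\kappa(k) e(\rho,k)\,dk$. The main piece $\kappa\widetilde\theta(\kappa\rho){1\choose 0}$ gives, after rescaling $u=\kappa\rho$, $\|\kappa\widetilde\theta(\kappa\cdot)\|_{L^2(d\rho)}=O(\kappa^{1/2})$ and $\|\rho\kappa\widetilde\theta(\kappa\cdot)\|_{L^2(d\rho)}=O(\kappa^{-1/2})$. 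The correction $\epsilon(\rho)=\int\theta_\kappa(k)(a_1(k)-1)e^{ik\rho}\,dk$, extended by $0$ to $k<0$, is a truncated Fourier transform; Plancherel together with $|a_1-1|\le Ck|\ln k|$ and $|\partial_k(a_1-1)|\le C|\ln k|$ from \eqref{e-00} yields $\|\epsilon\|_{L^2(d\rho)}=O(\kappa^{3/2}|\ln\kappa|)$ and $\|\rho\epsilon\|_{L^2(d\rho)}=O(\kappa^{1/2}|\ln\kappa|)$, which are subdominant. For the $e_1$-contribution I apply Minkowski with the bounds $\|e_1(\cdot,k)\|_{L^2(\R_+)}\le C$ and $\|\rho e_1(\cdot,k)\|_{L^2(\R_+)}\le Ck^{-1/2}|\ln k|$ from Lemma \ref{lem-e}(i), which are again subdominant.

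For the inner products I use Remark \ref{remark} to identify $\xi_0+\xi_1=2f_1(\rho,0)/\rho$ and $\xi_1-\xi_0=2f_3(\rho,0)/\rho$, reducing each pairing to a one-dimensional integral
\[
\langle h_\kappa,\sigma_3(\xi_0\pm\xi_1)\rangle=4\int_0^\infty d\rho\int_0^\infty dk\,\theta_\kappa(k)\,e(\rho,k)\cdot\sigma_3 f_j(\rho,0),
\]
with $j=1$ for the $+$ sign and $j=3$ for the $-$ sign. For $j=1$, Lemma \ref{lemma-f_1} gives $f_1(\rho,0)={1\choose 0}+O(\rho^{-2})$: only the non-decaying ${1\choose 0}$ part is problematic, and against it the $e_0$ main term yields
\[
4\int_0^\infty\kappa\widetilde\theta(\kappa\rho)\,d\rho=4\int_0^\infty\widetilde\theta(u)\,du=4\pi,
\]
while all remaining contributions (from $a_1-1$, from the $O(\rho^{-2})$ tail of $f_1$, and from $e_1$) are bounded by $O(\kappa|\ln\kappa|)$, which lies well within $O(\kappa^{1/2}\ln\kappa)$. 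For $j=3$ the relevant first component $f_{3,1}(\rho,0)$ is already $O(\rho^{-2})$ by Lemma \ref{lemma-f_3}, so the pairing is absolutely convergent and the same type of estimates give $O(\kappa|\ln\kappa|)$.

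The main obstacle is that the formal identity producing $4\pi$ is not licensed by Fubini: $\int_0^\infty d\rho\int_0^\infty dk\,\theta_\kappa(e^{ik\rho}+e^{-ik\rho})$ is not absolutely integrable in $(\rho,k)$. I resolve this by performing the $dk$ integral first, where the cancellation gives the Schwartz profile $\kappa\widetilde\theta(\kappa\rho)\in L^1(d\rho)$; the $(a_1-1)e^{ik\rho}$ piece is handled instead by truncating $\int_0^R d\rho$, swapping legitimately to obtain $\int dk\,\theta_\kappa(a_1-1)(e^{ikR}-1)/(ik)$, and passing $R\to\infty$ via Riemann--Lebesgue together with $|a_1-1|/k\le C|\ln k|\in L^1([0,\kappa/2])$.
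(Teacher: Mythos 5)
Your proof is correct and follows essentially the same route as the paper: the same decomposition of $h_\kappa$ into the main term $\frac{1}{2\pi\rho}\kappa\hat\theta(\kappa\rho){1\choose 0}$ plus the $(a_1-1)$- and $e_1$-contributions, the same rescaling for the $L^2$ bounds, and the same reduction of the pairings to the first components of $f_1(\cdot,0)$, $f_3(\cdot,0)$ via Remark \ref{remark}. Your explicit Plancherel bounds and the truncation/Riemann--Lebesgue justification of the interchange of integrals only spell out steps the paper leaves implicit.
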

\begin{proof}
Applying Lemma \ref{lem-e} (i), we decompose $h_\kappa$ as follows:
\begin{equation}\label{h-0}\begin{split}
&h_\kappa(y)=h_{\kappa,0}(y)+h_{\kappa,1}(y)+h_{\kappa,2}(y),\\
&h_{\kappa,0}(y)=\frac{1}{2\pi \rho}\kappa\hat \theta(\kappa \rho){1\choose 0},\\
&h_{\kappa,1}(y)=\frac{1}{2\pi \rho}\int_{\R_+}dk e^{ik\rho}(a_1(k)-1)\theta_\kappa(k){1\choose 0},\\
&h_{\kappa,2}(y)=\frac{1}{2\pi \rho}\int_{\R_+}dk \theta_\kappa(k)e_1(\rho,k),
\end{split}
\end{equation}
where $\hat \theta(\rho)=\int_\R e^{ik\rho}\theta(k) dk$, $\rho=|y|$.

Clearly, $h_{\kappa,0}\in <y>^{-1}L^2(\R^3)$ and one has
\be\label{h-01}
\|h_{\kappa,0}\|_{L^2(\R^3)}\leq C\kappa^{1/2},\quad \|yh_{\kappa,0}\|_{L^2(\R^3)}\leq C\kappa^{-1/2}.
\ee
Consider $h_{\kappa,i}$, $i=1,2$.
It follows from  \eqref{e-0}, \eqref{e-00}, \eqref{e-1} that
\be\label{h-02}
\|h_{\kappa,i}\|_{L^2(\R^3)}\leq C \kappa,\,\,\|yh_{\kappa,i}\|_{L^2(\R^3)}\leq C \kappa^{1/2}|\ln\kappa|,\quad i=1,2,
\ee
which together with \eqref{h-01} leads to the estimates
\be\label{h-03}
\|h_{\kappa}\|_{L^2(\R^3)}\leq C\kappa^{1/2},\quad \|yh_{\kappa}\|_{L^2(\R^3)}\leq C\kappa^{-1/2}.
\ee
We next compute $\left<h_\kappa,\sigma_3(\xi_1\pm \xi_0)\right>$. By  \eqref{h-01}, \eqref{h-02},
as $\kappa\rightarrow 0$, one has
\begin{equation}\label{h-2}\begin{split}
&\left<h_\kappa,\sigma_3(\xi_1\pm \xi_0)\right>=\left<h_{\kappa,0},\sigma_3(\xi_1\pm \xi_0)\right>+O(\kappa^{1/2}\ln \kappa),\\
&\left<h_{\kappa,0},\sigma_3(\xi_1-\xi_0)\right>=O(\kappa),\\
&\left<h_{\kappa,0},\sigma_3(\xi_1+\xi_0)\right>={2\kappa}\int_\R  d\rho
\hat \theta(\kappa\rho) +O(\kappa)=4\pi+O(\kappa),
\end{split}
\ee
which gives  \eqref{h-000}.
\end{proof}

\subsection{Proof of Proposition \ref{lin}}
We start by deriving some coercivity bounds for the operator $H$.
\begin{lemma}\label{4-1}
There exists $\kappa_0$, $0<\kappa_0\leq k_0$, and $C>0$ such that
\be\label{coer-1}
\left<Hf,\sigma_3 f\right>\geq C\kappa \|\nabla f\|_{L^2(\R^3)}^2,
\ee
for any $0<\kappa\leq \kappa_0$ and any $f\in\dot H^1_{rad}(\R^3,\C^2)$ verifying
\be\label{orth-1}
\left<f,\sigma_3 \zeta_-\right>=\left<f,\sigma_3 \zeta_+\right>=\left<f,\sigma_3 h_\kappa\right>=
\left<f,\sigma_3 \sigma_1\bar h_\kappa\right>=0.
\ee
\end{lemma}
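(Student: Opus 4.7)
The plan is to reduce the inequality to the Duyckaerts--Merle (DM) coercivity of $H$ via a two-parameter correction argument. From \cite{DM} I invoke the following baseline: there exists $c_0>0$ such that $\langle Hg,\sigma_3 g\rangle\geq c_0\|\nabla g\|_{L^2}^2$ for every $\vec g={g\choose \bar g}\in\dot H^1_{rad}(\R^3;\C^2)$ of physical form satisfying $\langle g,\sigma_3\zeta_\pm\rangle = 0$ together with orthogonality to the two-dimensional physical zero-energy subspace of $H$, spanned by $i\xi_0$ and $\xi_1$ (which correspond in scalar form $f=u+iv$ to the null modes $iW$ and $W_1$ of the linearization of the NLS around $W$).

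Given $f$ satisfying \eqref{orth-1}, I write $f = g + \mu_1\,i\xi_0 + \mu_2\,\xi_1$ with $\mu_1,\mu_2\in\R$ determined so that $g$ verifies the DM orthogonality to the null modes. Because $H(i\xi_0)=H\xi_1=0$ and the kernel of $H$ is $\sigma_3$-orthogonal to the eigenfunctions $\zeta_\pm$ (by the identity $\sigma_3 H=H^*\sigma_3$), one has $\langle Hf,\sigma_3 f\rangle=\langle Hg,\sigma_3 g\rangle$ and $\langle g,\sigma_3\zeta_\pm\rangle = 0$ automatically, so it remains to estimate $\mu_1,\mu_2$ in terms of $\|\nabla g\|$. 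Substituting the decomposition into the two remaining conditions $\langle f,\sigma_3 h_\kappa\rangle = \langle f,\sigma_3\sigma_1\bar h_\kappa\rangle = 0$ produces a $2\times 2$ linear system
$$
M(\kappa){\mu_1\choose \mu_2} = {-\langle g,\sigma_3 h_\kappa\rangle\choose -\langle g,\sigma_3\sigma_1\bar h_\kappa\rangle},
$$
whose matrix $M(\kappa)$ equals, by \eqref{h-000}, a fixed nondegenerate $2\times 2$ matrix (with entries of modulus $2\pi$) up to $O(\kappa^{1/2}|\ln\kappa|)$ corrections, hence $M(\kappa)^{-1}=O(1)$. The right-hand side is controlled by duality, $|\langle g,\sigma_3 h_\kappa\rangle|\leq\|\nabla g\|_{L^2}\|h_\kappa\|_{\dot H^{-1}}$; from the three-piece decomposition \eqref{h-0}---reflecting that the mass of $h_\kappa$ is concentrated at the scale $|y|\sim\kappa^{-1}$ with amplitude $\kappa/|y|$---a direct Plancherel computation gives $\|h_\kappa\|_{\dot H^{-1}}=O(\kappa^{-1/2})$. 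Hence $|\mu_1|+|\mu_2|\leq C\kappa^{-1/2}\|\nabla g\|$, and the triangle inequality applied to $f=g+\mu_1\,i\xi_0+\mu_2\,\xi_1$ (using $\|\nabla(i\xi_0)\|+\|\nabla\xi_1\|=O(1)$) yields $\|\nabla f\|\leq C\kappa^{-1/2}\|\nabla g\|$, i.e.\ $\|\nabla g\|^2\geq c\kappa\|\nabla f\|^2$. Combining with the DM bound applied to $g$ gives \eqref{coer-1}.

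The principal technical obstacle is to set up the DM-type coercivity in a form that is compatible with this correction: since $i\xi_0,\xi_1$ fail to be in $L^2$, the usual DM orthogonality conditions $\int uW\,dy=\int vW_1\,dy=0$ involve integrals that diverge when the test vectors are the null modes themselves, so the projection defining $\mu_1,\mu_2$ must be realized via pairings against regularized or truncated resonance functions for which the matrix is both bounded and invertible. The factor $\kappa$ in \eqref{coer-1} traces back entirely to the quantitative $\dot H^{-1}$-blowup of $h_\kappa$, a sharp manifestation of the delocalization of $h_\kappa$ to scale $\kappa^{-1}$ as $\kappa\to 0$; it is precisely this delocalization that makes $h_\kappa$-orthogonality a degenerate substitute for true zero-mode orthogonality.
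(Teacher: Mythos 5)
Your argument is correct and is essentially the paper's own proof: both decompose $f$ into a Duyckaerts--Merle-orthogonal part $g$ plus a combination of the zero modes, invert the $2\times 2$ matrix of pairings with $h_\kappa$ (nondegenerate up to $O(\kappa^{1/2}\ln\kappa)$ by \eqref{h-000}), and pay the factor $\kappa$ through the $O(\kappa^{-1/2})$ dual-norm size of $h_\kappa$ --- you via $\|h_\kappa\|_{\dot H^{-1}}$, the paper via Hardy's inequality together with $\|\left<y\right> h_\kappa\|_{L^2}=O(\kappa^{-1/2})$, which amounts to the same bound. The only inaccuracy is the ``technical obstacle'' you raise at the end: the DM orthogonality conditions are pairings against $\Delta\xi_0,\Delta\xi_1$, which decay like $\rho^{-5}$, so the projection coefficients $\alpha_j=-\left<f,\Delta\xi_j\right>/\|\nabla\xi_j\|_{L^2}^2$ are well defined and no regularization or truncation is needed.
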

\begin{remark}
Notice  that since $\zeta_\pm, h_\kappa\in <y>^{-1}L^2(\R^3)$ the scalar products that
appear in \eqref{orth-1} are well defined for any $f\in \dot H^1$.
\end{remark}
\begin{proof}
The proof of Lemma \ref{4-1} is based on the following result which is due to 
Duyckaerts and Merle:
\begin{lemma}\label{DM}
There exists  $c_0>0$ such that
\begin{equation*}
\left<Hf,\sigma_3 f\right>\geq c_0 \|\nabla f\|_{L^2(\R^3)}^2,
\end{equation*}
for any $f\in\dot H^1_{rad}(\R^3,\C^2)$ verifying
\begin{equation*}
\left<f,\sigma_3 \zeta_-\right>=\left<f,\sigma_3 \zeta_+\right>=\left<f,\Delta \xi_0\right>=
\left<f,\Delta \xi_1\right>=0,
\end{equation*}
\end{lemma}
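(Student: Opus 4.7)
The plan is to reduce to scalar coercivity for the operators $L_\pm = -\Delta \mp 5W^4$ (with $L_-=-\Delta-W^4$ as in Section 2) via the diagonalizing change of variables $u=f_1+f_2$, $v=f_1-f_2$. A direct computation of $\langle Hf,\sigma_3 f\rangle$ in these variables, using $|f_1|^2+|f_2|^2=\tfrac12(|u|^2+|v|^2)$ and $\Re(f_1\bar f_2)=\tfrac14(|u|^2-|v|^2)$, yields the decoupled form
\begin{equation*}
\langle Hf,\sigma_3 f\rangle=\tfrac12\langle L_+u,u\rangle+\tfrac12\langle L_-v,v\rangle,\qquad \|f\|_{\dot H^1}^2=\tfrac12(\|u\|_{\dot H^1}^2+\|v\|_{\dot H^1}^2),
\end{equation*}
so it suffices to prove scalar coercivity for $L_+$ on $u$ and $L_-$ on $v$ under the induced orthogonality conditions.

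Translating the four orthogonalities: since $\Delta W=-W^5$ and $\Delta W_1=-5W^4W_1$ (from $L_+W_1=0$), and since $\xi_0=\tfrac{1}{\sqrt3}W(1,-1)^t$, $\xi_1=-\tfrac{2}{\sqrt3}W_1(1,1)^t$ as recalled in Remark \ref{remark}, the pair $\langle f,\Delta\xi_0\rangle=\langle f,\Delta\xi_1\rangle=0$ becomes $\int W^5v=0$ and $\int W^4W_1u=0$, i.e.\ $\langle v,W\rangle_{\dot H^1}=0$ and $\langle u,W_1\rangle_{\dot H^1}=0$. Writing $\zeta_+=(a,\bar a)^t$ (the form forced by $\sigma_1\zeta_+=\bar\zeta_+$ recorded in \S4.1) and decomposing $2a=Y+iZ$ with $Y,Z$ real radial, the eigenvalue equation $H\zeta_+=i\lambda_0\zeta_+$ translates, under the same $u,v$ reduction applied to $\zeta_+$, into the coupled system
\begin{equation*}
L_+Y=-\lambda_0 Z,\qquad L_-Z=\lambda_0 Y,
\end{equation*}
and the conditions $\langle f,\sigma_3\zeta_\pm\rangle=0$ reduce to $\int uZ=0$ and $\int vY=0$.

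For the $v$-component the argument is routine: $L_-$ is a nonnegative Schr\"odinger operator whose $\dot H^1_{rad}$-kernel is simple, spanned by the positive function $W$ (Perron--Frobenius). Standard radial spectral theory together with the compactness of the embedding $\dot H^1_{rad}\hookrightarrow L^2(W^4\,dx)$ yields $\langle L_-v,v\rangle\geq c_1\|v\|_{\dot H^1}^2$ for every $v$ with $\int W^5v=0$; the extra condition $\int vY=0$ is unused here.

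The main obstacle is the $L_+$ coercivity, since $L_+$ has a simple negative eigenvalue $-e_0$ with radial eigenfunction $Y_0>0$ (in addition to its zero eigenvalue $W_1$), and the negative direction must be controlled using only $\int uZ=0$. My plan is a contradiction-and-compactness argument: were coercivity to fail, a minimizing sequence on $\{u:\|u\|_{\dot H^1}=1,\ \int uZ=\int uW^4W_1=0\}$ would, by Rellich compactness of the potential term $W^4|u|^2$, converge weakly to a minimizer $u_*$ satisfying an Euler--Lagrange equation $L_+u_*=\mu(-\Delta)u_*+\mu_1W^4W_1+\mu_2Z$ with $\mu=\langle L_+u_*,u_*\rangle\leq 0$. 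Testing this identity against $Y$ and $W_1$ (using $L_+Y=-\lambda_0 Z$, $L_+W_1=0$ and the constraints on $u_*$) extracts the multipliers and yields a linear system whose consistency is blocked by the nondegeneracy $\int YZ\neq 0$. This nondegeneracy is seen by contradiction: if $\int YZ=0$ then pairing $L_-Z=\lambda_0 Y$ with $Z$ gives $\int Z\,L_-Z=0$, forcing $Z\in\ker L_-=\linspan(W)$, hence $L_-Z=0$ and therefore $Y=0$; but then $\zeta_+$ would be proportional to $(W,-W)^t$, which is annihilated by $H$, contradicting $i\lambda_0\neq 0$. This forces $\mu>0$, producing the desired $L_+$ coercivity and completing the proof.
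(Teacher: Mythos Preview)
The paper does not prove this lemma: it simply states ``see \cite{DM} for the proof'' and quotes the result from Duyckaerts--Merle. So there is no in-paper argument to compare against; what matters is whether your proposal is a valid independent proof.

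Your reduction is correct and natural. The change of variables $u=f_1+f_2$, $v=f_1-f_2$ indeed decouples $\langle Hf,\sigma_3 f\rangle$ into $\tfrac12\langle L_+u,u\rangle+\tfrac12\langle L_-v,v\rangle$, and your translation of the four orthogonality conditions into $\int W^5 v=0$, $\int W^4W_1\,u=0$, $\int uZ=0$, $\int vY=0$ (together with the coupled system $L_+Y=-\lambda_0 Z$, $L_-Z=\lambda_0 Y$) is accurate. The $L_-$ step is routine as you say.

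The gap is in the $L_+$ step. Your Euler--Lagrange contradiction is not actually carried out: you assert that testing $L_+u_*=\mu(-\Delta)u_*+\mu_1W^4W_1+\mu_2 Z$ against $Y$ and $W_1$ yields an inconsistent system because $\int YZ\neq0$, but this is not what happens. Testing against $W_1$ does kill $\mu_1$ (using $\int ZW_1=0$, which follows from $Z=-\lambda_0^{-1}L_+Y$). Testing against $Y$ then yields $-5\mu\int W^4Y\,u_*=\mu_2\int YZ$, which is a single relation involving the unknown $\int W^4Y\,u_*$ and gives no contradiction when $\mu<0$. Your argument works cleanly only in the borderline case $\mu=0$ (there $L_+u_*=\mu_2 Z=-\mu_2\lambda_0^{-1}L_+Y$, so $u_*+\mu_2\lambda_0^{-1}Y\in\ker L_+=\linspan(W_1)$, and the two constraints then force $u_*=0$).

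What is actually needed is the \emph{sign} of $\int YZ$, not just its nonvanishing. From $L_-Z=\lambda_0 Y$ and $L_-\ge0$ you get $\lambda_0\int YZ=\langle L_-Z,Z\rangle\ge0$; combined with your $\int YZ\neq0$ argument this gives $\int YZ>0$. Then, since $Z\perp\ker L_+$ and $L_+(-\lambda_0^{-1}Y)=Z$, one has $\langle L_+^{-1}Z,Z\rangle=-\lambda_0^{-1}\int YZ<0$. This is precisely the Vakhitov--Kolokolov/GSS index condition guaranteeing that the single constraint $\int uZ=0$ removes the unique negative direction of $L_+$; together with $\int W^4W_1\,u=0$ (which handles the null direction) and the compactness you already invoked, coercivity follows. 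Replacing your vague ``linear system'' paragraph with this index-count argument would close the proof.
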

see \cite{DM} for the proof.

Let $f\in \dot H^1_{rad}$ such that \eqref{orth-1} holds. One can write $f$ as
$$f=\alpha_0 \xi_0+\alpha_1 \xi_1+g,$$
where
$$\alpha_j=-\frac{\left <f,\Delta \xi_j\right>}{\|\nabla \xi_j\|_{L^2(\R^3)}^2},\quad j=0,1,
$$
and $g\in \dot H^1_{rad}$ verifies
$$\left<g,\sigma_3 \zeta_-\right>=\left<g,\sigma_3 \zeta_+\right>=\left<g,\Delta \xi_0\right>=
\left<g,\Delta\xi_1\right>=0.
$$
Therefore, by Lemma \ref{DM},
\be\label{DM-1}
\left<H f, \sigma_3 f\right>= \left<H g, \sigma_3 g\right>\geq  c_0\| \nabla g \|_{L^2(\R^3)}^2.
\ee Furthermore, since $f$ verifies \eqref{orth-1}, one has
$$A(\kappa) {\alpha_0\choose \alpha_1}=
{\left<g, \sigma_3 h_{ \kappa}\right>\choose\left<g, \sigma_3 \sigma_1\bar h_{ \kappa}\right>},$$
where 
$$A(\kappa)=-\left( \begin{array}{cc}
\big < \xi_0,  \sigma_3 h_{\kappa}\big>&\left < \xi_1,  \sigma_3 h_\kappa\right>\\
\left < h_{\kappa},\sigma_3\xi_0\right>&-\left < h_\kappa,\sigma_3\xi_1\right>
\end{array} \right).$$
By \eqref{h-000},
$$A(\kappa) = -2\pi\left( \begin{array}{cc}
1 & 1\\
1 & -1
\end{array} \right)+O(\kappa^{1/2}\ln\kappa),\quad \kappa\rightarrow 0.$$
 Therefore, for $\kappa$ sufficiently small, one has
$$|\alpha_0| +|\alpha_1| \leq C \| \nabla g \|_{L^2(\R^3)} \| <y> h_{\kappa} \|_{L^2(\R^3)}
\leq C \kappa^{-1/2} \| \nabla g \|_{L^2(\R^3)}.
$$
As a consequence,
$$\| \nabla f \|_{L^2(\R^3)} \leq C \kappa^{-1/2} \| \nabla g \|_{L^2(\R^3)}.$$
Combining this inequality with \eqref{DM-1} we get \eqref{coer-1}.
\end{proof}

Next, we prove
\begin{lemma}	\label{grad-estim}
There exists $\kappa_1$, $0<\kappa_1\leq k_0$, and $C>0$ such that
for any $0<\kappa\leq \kappa_1$ one has
$$\| f\|_{H^1(\R^3)} \leq \frac C\kappa \| \nabla f \|_{L^2(\R^3)},$$
for all  $f \in H^1_{rad}(\R^3)$ verifying
$\mathbb{E}_{\kappa}^*f= 0$.
\end{lemma}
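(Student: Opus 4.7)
I would prove Lemma~\ref{grad-estim} by contradiction. Suppose the conclusion fails; then there exist sequences $\kappa_n\to 0^+$ and $f_n$ in the natural $\C^2$-valued version of $H^1_{rad}(\R^3)$ on which $\mathbb E^*_{\kappa_n}$ acts (as for $\zeta_\pm$ and $h_\kappa$ earlier in the section), with $\|f_n\|_{H^1}=1$, $\mathbb E^*_{\kappa_n}f_n=0$, and $\|\nabla f_n\|_{L^2}\le \kappa_n/n\to 0$. Then $\|f_n\|_{L^2}\to 1$, and passing to a subsequence $f_n\rightharpoonup 0$ weakly in $H^1$ (the only possible weak limit, since $\nabla f_\infty=0$ forces $f_\infty$ to be constant, hence zero in $L^2(\R^3)$). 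By Plancherel,
\[
\|\hat f_n\|_{L^2(|\xi|\ge\delta)}^2\le\delta^{-2}\|\nabla f_n\|_{L^2}^2\to 0\qquad\text{for every fixed }\delta>0,
\]
so that almost all the $L^2$-mass of $\hat f_n$ is asymptotically supported in $\{|\xi|\le\kappa_n/4\}$.

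The heart of the argument is to convert the hypothesis $\mathbb E^*_{\kappa_n}f_n=0$ into the parallel statement that the \emph{ordinary} low-frequency mass of $f_n$ also vanishes, yielding $\|f_n\|_{L^2}\to 0$ and the contradiction. I would decompose $\mathcal E=\mathcal E_0+\mathcal E_1$ along $e=e_0+e_1$ from Lemma~\ref{lem-e}(i). The principal piece $\mathcal E_0$, governed by $a_1(k)e^{ik\rho}+e^{-ik\rho}$ with $a_1(0)=1$, pairs against radial $f_n$ essentially as the free radial Fourier transform evaluated at $k$, and hence on $L^2([0,\kappa_n/2])$ it produces a quantity comparable in norm to $\|\hat f_n\|_{L^2(|\xi|\le\kappa_n/2)}$. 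The remainder $\mathcal E_1$, controlled pointwise and in $L^2$ by \eqref{e-1} and \eqref{e-00}, contributes an operator whose action on $f_n$ I would bound, after integration in $k$ against $\theta_{\kappa_n}$, by $o(1)\|f_n\|_{L^2}$.

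Combined with the first step, the vanishing of $\mathbb E^*_{\kappa_n}f_n$ then forces $\|\hat f_n\|_{L^2(|\xi|\le\kappa_n/4)}\to 0$, which together with the Plancherel display contradicts $\|f_n\|_{L^2}\to 1$. The main obstacle is the quantitative comparison between $\mathbb E^*_{\kappa_n}$ and the free Fourier transform: the bounds $\|\rho e_1(\cdot,k)\|_{L^2(\R_+)}$, $|\partial_k a_1(k)|\lesssim k^{-1/2}|\ln k|$ in \eqref{e-1}--\eqref{e-00} are borderline, so the comparison must be set up so that these losses, once integrated in $k$ over $\supp\theta_{\kappa_n}\subset[0,\kappa_n/2]$, are absorbed by the $L^2$-concentration of $\hat f_n$ at scales finer than $\kappa_n/4$. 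In effect one is establishing a quantitative near-isometry property of $\mathbb E^*_\kappa$ restricted to low-frequency radial functions, with isometry constant fixed by $a_1(0)=1$; the Poincar\'e-type bound then follows purely from spectral considerations.
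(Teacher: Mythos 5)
Your high-level strategy is the same as the paper's: the orthogonality $\mathbb E^*_\kappa f=0$ is used to kill the low-frequency part of $f$, while $\|\nabla f\|_{L^2}$ controls the frequencies above $\kappa$ at the cost of a factor $\kappa^{-1}$; and your estimate of the contribution of $\mathcal E_1$ (an $O(\kappa^{1/2})\|f\|_{L^2}$ error after integrating $\theta_\kappa$ in $k$, via \eqref{e-1} and \eqref{e-00}) is exactly the paper's bound on the remainder $\Phi_r$. But the key comparison step has a genuine gap. Because of the zero-energy resonance ($a_1(0)=1$), the principal part of the kernel of $\mathbb E^*_\kappa$ is $\rho^{-1}\bigl(a_1(k)e^{ik\rho}+e^{-ik\rho}\bigr)\approx 2\rho^{-1}\cos (k\rho)$, so the leading term of $(\mathbb E^*_\kappa f)(k)$ is a multiple of $\theta_\kappa(k)\check f(k)$ with $\check f(k)=2\int\frac{\cos k|y|}{|y|}f(y)\,dy$, i.e.\ the \emph{cosine} transform of $\rho f(\rho)$. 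This is not ``essentially the free radial Fourier transform'': the free radial transform in $\R^3$ has kernel $\sin(k\rho)/(k\rho)$, so $\|\hat f\|_{L^2(|\xi|\le\delta)}$ is the low-frequency norm of the \emph{sine} transform of $\rho f(\rho)$. The two are the real and imaginary parts of $h(k)=\int_0^\infty e^{ik\rho}\rho f(\rho)\,d\rho$, related by a nonlocal Hilbert transform, and their low-frequency masses are not comparable: for any radial $f$ with $\int_0^\infty\rho f\,d\rho=0$ but $\int_0^\infty\rho^2 f\,d\rho\ne0$ one has $\check f(k)=O(k^2)$ while the sine transform is $\sim ck$, so $\|\hat f\|_{L^2(|\xi|\le\delta)}\sim\delta^{3/2}\gg\delta^{5/2}\sim\|\check f\|_{L^2(0,\delta)}$. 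Consequently the implication ``$\mathbb E^*_{\kappa_n}f_n=0$ forces $\|\hat f_n\|_{L^2(|\xi|\le\kappa_n/4)}\to0$'' does not follow, and it is precisely what your Plancherel display (which lives in the sine-transform variable) needs in order to produce the contradiction.

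The repair is to drop $\hat f$ altogether and run Plancherel in the cosine-transform variable, which is what the paper does. Writing $g=\rho f$ (so $g(0)=0$ for $f\in H^1_{rad}$), one has $\|f\|_{L^2(\R^3)}\simeq\|\check f\|_{L^2(\R_+)}$ and $\|\nabla f\|_{L^2(\R^3)}\simeq\|g'\|_{L^2(\R_+)}\simeq\|k\check f\|_{L^2(\R_+)}$ by one-dimensional Plancherel for the even extension of $g$; hence $\|f\|_{H^1}\le C\bigl(\|\check f\|_{L^2(0,\kappa/4)}+\kappa^{-1}\|\nabla f\|_{L^2}\bigr)$, and combining this with $\|\check f\|_{L^2(0,\kappa/4)}\le C\kappa^{1/2}\|f\|_{L^2}$ (which is what $\mathbb E^*_\kappa f=0$ actually yields) and absorbing the first term for $\kappa$ small gives the lemma directly. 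Note also that the contradiction/weak-compactness scaffolding buys you nothing: weak convergence to $0$ in $H^1(\R^3)$ gives no $L^2$ decay on an unbounded domain, and once the frequency splitting is set up in the correct variable the estimate is already quantitative, so the direct argument is both shorter and safer.
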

\begin{proof}
By \eqref{e-0}, \eqref{e-00} and Lemma \ref{lem-e} (i), $\mathbb{E}_{\kappa}^*f$
can be written as
$$(\mathbb{E}_{\kappa}^*f)(k)=\Phi_0(k)+\Phi_r(k),$$
where
$$\Phi_0(k)=\frac{1}{2^{3/2}\pi}\theta_\kappa(k)\check f(k),$$
$\check f(k)=2\int_{\R^3}dy\frac{\cos k|y|}{|y|}f(y) $, 
and the remainder $\Phi_r$ satisfies
$$
\|\Phi_r\|_{L^2(\R_+)}\leq C\kappa^{1/2}\| f \|_{L^2(\R^3)}.
$$
Therefore, $\mathbb{E}_{\kappa}^*f= 0$ implies
\be\label{g}
\|\check f\|_{L^2(0,\kappa/4)}\leq C\kappa^{1/2}\| f \|_{L^2(\R^3)}.
\ee
Notice also that for any $f\in H^1_{rad}$ and any $0<\kappa\leq 1$ one has
$$\| f\|_{H^1(\R^3)} \leq C(\|\check f\|_{L^2(0,\kappa/4)}+\kappa^{-1} \| \nabla f \|_{L^2(\R^3)}).$$
Combining this inequality with \eqref{g}, we get
$$\| f\|_{H^1(\R^3)} \leq \frac C\kappa \| \nabla f \|_{L^2(\R^3)},$$
provided $\kappa$ is sufficiently small.
\end{proof}
We finally combine Lemmas \ref{4-1}, \ref{grad-estim} to derive the following
result which will be in the heart of the proof of Prop. \ref{lin}

\begin{lemma}	\label{H-estimates}
There exists $\kappa_2$,
$0<\kappa_2\leq k_0$, and $C>0$ such that for any $0 < \kappa \leq \kappa_2$ one has 
\be\label{w0}
\left< H f, \sigma_3 f \right> \geq C\kappa^3\| f\|_{H^1}^2 -\frac \kappa C\| \mathbb{E}_{\kappa}^*\sigma_3f \|_{L^2(\R_+)}^2,
\ee
 for any $f\in H^1_{rad}(\R^3,\C^2)$ verifying $\left<f, \sigma_3 \zeta_{\pm}\right> = 0$.
\end{lemma}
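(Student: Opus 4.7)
The strategy is to combine the coercivity of Lemma \ref{4-1} with the gradient estimate of Lemma \ref{grad-estim} via a finite-dimensional correction in the two resonant directions $h_\kappa$ and $\sigma_1\bar h_\kappa$. Given $f\in H^1_{rad}(\R^3,\C^2)$ with $\langle f,\sigma_3\zeta_\pm\rangle=0$, I write $f=g+ah_\kappa+b\sigma_1\bar h_\kappa$ and choose $a,b\in\C$ so that $g$ satisfies all four orthogonality conditions required by Lemma \ref{4-1}. The conditions against $\sigma_3\zeta_\pm$ hold automatically: since $h_\kappa=\sqrt 2\,\mathbb E_\kappa{1\choose 0}$ and (by a direct computation) $\sigma_1\bar h_\kappa=\sqrt 2\,\mathbb E_\kappa{0\choose 1}$, relation \eqref{comp-0} gives $\langle h_\kappa,\sigma_3\zeta_\pm\rangle=\langle\sigma_1\bar h_\kappa,\sigma_3\zeta_\pm\rangle=0$. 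The two remaining conditions $\langle g,\sigma_3 h_\kappa\rangle=\langle g,\sigma_3\sigma_1\bar h_\kappa\rangle=0$ form a $2\times 2$ linear system for $(a,b)$ whose Gram matrix, by the identity \eqref{comp}, reduces to the diagonal matrix with entries $\pm 2\int\theta_\kappa^2\,dk\sim\pm\kappa$, hence is invertible with inverse of norm $O(\kappa^{-1})$. Cramer's rule together with the Cauchy--Schwarz bound $|\langle f,\sigma_3 h_\kappa\rangle|=\sqrt 2\,|\langle\mathbb E_\kappa^*\sigma_3 f,{1\choose 0}\rangle|\leq C\kappa^{1/2}\|\mathbb E_\kappa^*\sigma_3 f\|_{L^2(\R_+)}$ (the cutoff $\theta_\kappa$ confines the integration to a set of measure $\sim\kappa$) and its analogue for $\sigma_1\bar h_\kappa$ then produces
$$|a|+|b|\leq C\kappa^{-1/2}\|\mathbb E_\kappa^*\sigma_3 f\|_{L^2(\R_+)}.\qquad(\ast)$$

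With $g$ admissible, Lemma \ref{4-1} yields $\langle Hg,\sigma_3 g\rangle\geq c\kappa\|\nabla g\|_{L^2}^2$. Expanding
$$\langle Hf,\sigma_3 f\rangle=\langle Hg,\sigma_3 g\rangle+2\,\mathrm{Re}\,\langle g,\sigma_3 Hh\rangle+\langle Hh,\sigma_3 h\rangle,\qquad h:=ah_\kappa+b\sigma_1\bar h_\kappa,$$
and using the intertwining relation $H\mathbb E_\kappa=\mathbb E_\kappa\,k^2\sigma_3$ together with the uniform boundedness of $\mathbb E_\kappa:L^2([0,k_0])\to H^m(\R^3)$, one obtains $\|Hh\|_{L^2}\leq C\kappa^{5/2}(|a|+|b|)$ and $|\langle Hh,\sigma_3 h\rangle|\leq C\kappa^3(|a|^2+|b|^2)$. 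Inserting $(\ast)$ and handling the cross term by Cauchy--Schwarz followed by Young's inequality (so as to reabsorb a small multiple of $\kappa\|\nabla g\|_{L^2}^2$ into the coercivity), I arrive at
$$\langle Hf,\sigma_3 f\rangle\geq c'\kappa\|\nabla g\|_{L^2}^2-C\kappa^2\|\mathbb E_\kappa^*\sigma_3 f\|_{L^2(\R_+)}^2.$$

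It remains to turn $\|\nabla g\|^2$ into $\|f\|_{H^1}^2$ at the cost of an extra factor $\kappa^2$. For this I would invoke the \emph{unconditional} bound
$$\|F\|_{H^1}^2\leq C\|\mathbb E_\kappa^* F\|_{L^2(\R_+)}^2+C\kappa^{-2}\|\nabla F\|_{L^2}^2,$$
which is implicit in the proof of Lemma \ref{grad-estim} (combine $\|F\|_{H^1}\leq C\|\check F\|_{L^2(0,\kappa/4)}+C\kappa^{-1}\|\nabla F\|_{L^2}$ with $\|\check F\|_{L^2(0,\kappa/4)}\leq C\|\mathbb E_\kappa^* F\|_{L^2}+C\kappa^{1/2}\|F\|_{L^2}$ and absorb the $L^2$ error for $\kappa$ small). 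Applied to $F=\sigma_3 f$ it gives $\|f\|_{H^1}^2\leq C\|\mathbb E_\kappa^*\sigma_3 f\|_{L^2}^2+C\kappa^{-2}\|\nabla f\|_{L^2}^2$. Since $\|\nabla f\|_{L^2}^2\leq 2\|\nabla g\|_{L^2}^2+2\|\nabla h\|_{L^2}^2$ and $\|\nabla h\|_{L^2}^2\leq C\kappa(|a|^2+|b|^2)\leq C\|\mathbb E_\kappa^*\sigma_3 f\|_{L^2}^2$ by $(\ast)$, one concludes $\kappa\|\nabla g\|_{L^2}^2\geq c''\kappa^3\|f\|_{H^1}^2-C\kappa\|\mathbb E_\kappa^*\sigma_3 f\|_{L^2}^2$, which together with the previous display yields \eqref{w0}.

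The chief difficulty is the exact matching of powers of $\kappa$: the $\kappa^{1/2}$ gained from $|\mathrm{supp}\,\theta_\kappa|^{1/2}$, the $\kappa^{-1}$ from inverting the Gram matrix, the $\kappa^{5/2}$ from $\|Hh_\kappa\|_{L^2}$, and the $\kappa^{-2}$ loss in the unconditional version of Lemma \ref{grad-estim} must conspire exactly to produce the coefficients $\kappa^3$ and $\kappa$ advertised in \eqref{w0}. The two algebraic inputs that close the book-keeping are the spectral identity \eqref{comp}, which diagonalizes the Gram matrix at leading order, and the asymptotics \eqref{h-000}, which fix the size of its entries.
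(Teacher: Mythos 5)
Your argument is sound in substance but follows a genuinely different decomposition from the paper's. The paper writes $f=f_1+f_2$ with $f_1=\mathbb{E}_\kappa\sigma_3\mathbb{E}_\kappa^*\sigma_3 f$, i.e.\ it removes the whole low-frequency band rather than a two-dimensional piece: by \eqref{comp} and \eqref{comp-0} the remainder $f_2$ then satisfies, for every $\kappa'\le\kappa/2$, all of $\left<f_2,\sigma_3\zeta_\pm\right>=0$, $\mathbb{E}_{\kappa'}^*\sigma_3 f_2=0$ and $\left<f_2,\sigma_3 h_{\kappa'}\right>=\left<f_2,\sigma_3\sigma_1\bar h_{\kappa'}\right>=0$, so Lemmas \ref{4-1} and \ref{grad-estim} apply verbatim to $f_2$ and yield $\left<Hf_2,\sigma_3 f_2\right>\ge C\kappa^3\|f_2\|_{H^1}^2$ in one stroke; the cross terms are then controlled by exactly the two bounds you also derive, $\|f_1\|_{H^1}\le C\|\mathbb{E}_\kappa^*\sigma_3 f\|_{L^2(\R_+)}$ and $\|Hf_1\|_{L^2}\le C\kappa^2\|\mathbb{E}_\kappa^*\sigma_3 f\|_{L^2(\R_+)}$. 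You instead subtract only the rank-two quasi-resonant piece $ah_\kappa+b\sigma_1\bar h_\kappa$; this makes the Gram computation explicit (and your identifications $\sigma_1\bar h_\kappa=\sqrt2\,\mathbb{E}_\kappa{0\choose1}$, diagonal Gram matrix $\sim\pm\kappa$, and the bound $(\ast)$ are all correct), but it leaves a remainder $g$ to which only Lemma \ref{4-1} applies, which is why you must also extract the ``unconditional'' variant of Lemma \ref{grad-estim} to upgrade $\kappa\|\nabla g\|_{L^2}^2$ to $\kappa^3\|f\|_{H^1}^2$. That variant is indeed what the proof of Lemma \ref{grad-estim} actually establishes, and your derivation of it is fine. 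The paper's band projection buys cleaner bookkeeping; your version isolates more transparently the two directions responsible for the losses in $\kappa$.

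One step needs repair. You propose to handle the cross term by Cauchy--Schwarz and then absorb ``a small multiple of $\kappa\|\nabla g\|_{L^2}^2$'' into the coercivity, but Cauchy--Schwarz gives $|\left<g,\sigma_3 Hh\right>|\le\|g\|_{L^2}\|Hh\|_{L^2}$, and $\|g\|_{L^2}$ is \emph{not} controlled by $\|\nabla g\|_{L^2}$: Lemma \ref{grad-estim} does not apply to $g$, since $\mathbb{E}_\kappa^*\sigma_3 g\neq0$ in general. So the intermediate display $\left<Hf,\sigma_3 f\right>\ge c'\kappa\|\nabla g\|_{L^2}^2-C\kappa^2\|\mathbb{E}_\kappa^*\sigma_3 f\|_{L^2(\R_+)}^2$ does not follow as stated. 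Two easy fixes: either postpone the absorption until after you have converted $\kappa\|\nabla g\|_{L^2}^2$ into $c\kappa^3\|f\|_{H^1}^2-C\kappa\|\mathbb{E}_\kappa^*\sigma_3 f\|_{L^2(\R_+)}^2$, and absorb the term $\epsilon\kappa^3\|f\|_{H^1}^2$ there; or, better, compute the cross term exactly, $\left<g,\sigma_3 Hh\right>=\sqrt2\,\left<\mathbb{E}_\kappa^*\sigma_3 g,\,k^2\sigma_3{a\choose b}\right>$, note that $\|\mathbb{E}_\kappa^*\sigma_3 g\|_{L^2(\R_+)}\le C\|\mathbb{E}_\kappa^*\sigma_3 f\|_{L^2(\R_+)}$ because $\mathbb{E}_\kappa^*\sigma_3 h=\sqrt2\,\theta_\kappa^2\sigma_3{a\choose b}$ has norm $O(\kappa^{1/2}(|a|+|b|))$, and conclude that the cross term is $O(\kappa^2\|\mathbb{E}_\kappa^*\sigma_3 f\|_{L^2(\R_+)}^2)$ with no dependence on $g$ at all. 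With either repair the powers of $\kappa$ close exactly as you claim.
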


\begin{proof}
Write $f= f_1 +f_2$, where  $f_1 = \mathbb{E}_{\kappa}\sigma_3\mathbb{E}_{\kappa}^*\sigma_3f $
and $f_2=f- f_1$. One clearly has
\be\label{ww}\| f_1 \|_{H^1(\R^3)} \leq C \| \mathbb E_{\kappa}^*\sigma_3 f\|_{L^2(\R_+)} ,\quad
\| H f_1 \|_{L^2(\R^3)} \leq C \kappa^2 \| \mathbb E_{\kappa}^* \sigma_3 f\|_{L^2(\R_+)},
\ee
for any $0<\kappa\leq k_0$.

Consider $f_2$. It follows from \eqref{comp-0},  \eqref{comp} that for any $\kappa^\prime\leq \kappa/2$,
\begin{itemize}
\item[$\bullet$] $\left<f_2, \sigma_3 \zeta_{\pm}\right>= 0$;
\item[$\bullet$] $\mathbb{E}_{\kappa^\prime}^* \sigma_3f_2 = 0$;
\item[$\bullet$] $\left<f_2, \sigma_3 h_{ \kappa^\prime}\right>=\left<f_2, \sigma_3\sigma_1\bar h_{ \kappa^\prime}\right> =0$.
\end{itemize}
Hence, by Lemmas  \ref{4-1}, \ref{grad-estim}, one has
\be\label{www}
\left<H f_2, \sigma_3 f_2\right>\geq C \kappa ^3\| f_2 \|_{H^1(\R^3)}^2,
\ee
provided $\kappa$ is sufficiently small.

Combining  \eqref{ww}, \eqref{www} one gets \eqref{w0}.

\end{proof}

\noindent We are now in the position to prove Proposition \ref{lin}.
Consider the  equation
\be\label{lin-5}\begin{split}
&i\psi_\tau=P\mathcal H(\tau)P\psi, \\
&\psi|_{\tau=s}= f,
\end{split}
\ee
where
$$\mathcal H(\tau)=H+\tau^{-1}l,\quad l=\alpha_1\sigma_3-i\nu_1(\frac12 +y\cdot \nabla),$$
$\alpha_1,\nu_1\in \R$, $s>0$ and $f\in \mathcal S(\R^3)$ verifying
$\left<f, \sigma_3 \zeta_{\pm}\right> = 0$.

Fix $\kappa$ such that
$0<\kappa\leq \kappa_2$  and consider 
the functional 
$G_1(\tau)=\left<H \psi, \sigma_3 \psi\right>+
c_0\|\mathbb E_\kappa^*\sigma_3\psi\|_{L^2(\R_+)}^2$.
Clearly,
\be\label{end-00}G_1(\tau)\leq C\|\psi(\tau)\|_{H^1(\R^3)}^2.
\ee
Moreover,
since $\left<\psi(\tau), \sigma_3 \zeta_{\pm}\right>=0 $, choosing $c_0$ sufficiently large, we get:
\be\label{end-1}
G_1(\tau)\geq c_1\|\psi(\tau)\|_{H^1(\R^3)}^2.
\ee
We next compute the derivative $\frac{d}{d\tau}G_1$.
One has
$$i\frac{d}{d\tau}\left<H \psi, \sigma_3 \psi\right>=\frac{2i}{\tau}\Im \left<l\psi, \sigma_3 H\psi\right>
,$$
which implies
\be\label{end-2}
\left|\frac{d}{d\tau}\left<H \psi, \sigma_3 \psi\right>\right|\leq \frac C\tau(|\alpha_1|+|\nu_1|)
\|\nabla\psi(\tau)\|_{L^2(\R^3)}^2.
\ee
Next, we address $\|\mathbb E_\kappa^*\sigma_3\psi\|_{L^2(\R_+)}^2$.
Denote $\Phi(\tau)=\mathbb E_\kappa^*\sigma_3\psi(\tau)$. Then $\Phi(k,\tau)$ solves
\be\label{end-3}
i \Phi_{ \tau} = k^2 \sigma_3\Phi +\frac1\tau Y,
\ee
where
$$Y=\mathbb E_\kappa^*\sigma_3l\psi.$$
Integrating by parts and
applying Lemma \ref{lem-e} (ii), one can rewrite $Y$ in the form
$$Y(k,\tau)=Y_0(k,\tau)+Y_1(k,\tau),$$
where 
$$Y_0(k,\tau)=i\nu_1 k\partial_k\Phi(k,\tau),$$
and $Y_1(k,\tau)$ admits the estimate
$$\|Y_1(\tau)\|_{L^2(\R_+)}\leq C(|\alpha_1|+|\nu_1|)\|\psi(\tau)\|_{L^2(\R^3)}.$$
Therefore, \eqref{end-3} gives
$$\left|\frac{d}{d\tau}\|\Phi(\tau)\|_{L^2(\R_+)}^2\right|\leq \frac C\tau (|\alpha_1|+|\nu_1|)\|\psi(\tau)\|_{L^2(\R^3)}^2.$$
Combining this inequality with \eqref{end-3} and taking into account \eqref{end-1}
one gets
\be\label{end-4}
\left|\frac{d}{d\tau}G_1(\tau)\right|\leq \frac C\tau (|\alpha_1|+|\nu_1|)\|\psi(\tau)\|_{H^1(\R^3)}^2\leq \frac C\tau (|\alpha_1|+|\nu_1|)G_1(\tau).
\ee
Integrating we obtain
$$G_1(\tau)\leq C\left(\frac s\tau\right)^{C (|\alpha_1|+|\nu_1|)}G_1(s),\quad 0<\tau\leq s,$$
which by \eqref{end-00},  \eqref{end-1}, leads to the bound
\be\label{end-7}
\|U(\tau,s)f \|_{H^1(\R^3)} \leq C \left( \frac{s}{\tau} \right)^{C(|\alpha_1|+|\nu_1|)} \| f\|_{H^1(\R^3)},
\ee
for any $0<\tau\leq s$ and any $f\in H_{rad}^1$.

To control the higher regularity, consider the functional
$G_2(\tau)=\left<H^2 \psi, \sigma_3 H\psi\right>+c_2 G_1(\tau)$.
One has 
$$C^{-1}\|\psi\|_{H^3(\R^3)}^2\leq  G_2\leq C\|\psi\|_{H^3(\R^3)}^2,$$
provided $c_2$ is chosen sufficiently large.

Computing the derivative $\frac{d}{d\tau}\left<H^2 \psi(\tau), \sigma_3 H\psi(\tau)\right>$
and taking into account \eqref{end-4}
we get
\be\label{end-5}
\left|\frac{d}{d\tau}G_2(\tau)\right|\leq \frac C\tau (|\alpha_1|+|\nu_1|)\|\psi(\tau)\|_{H^3(\R^3)}^2
\leq \frac C\tau (|\alpha_1|+|\nu_1|)G_2(\tau).
\ee
which implies
\be\label{end-8}
\|U(\tau,s)f \|_{H^3(\R^3)} \leq C \left( \frac{s}{\tau} \right)^{C(|\alpha_1|+|\nu_1|)} \| f\|_{H^3(\R^3)},\ee
for any $0<\tau\leq s$.

The $H^2$ bounded stated in Prop. \ref{lin} follows from \eqref{end-7}, \eqref{end-8} by interpolation.

\end{document}